\numberwithin{equation}{section}
\newtheorem{theorem}{Theorem}[section]
\newtheorem{lemma}[theorem]{Lemma}
\newtheorem{prop}[theorem]{Proposition}
\theoremstyle{definition}
\newtheorem{remark}[theorem]{Remark}
\theoremstyle{definition}
\theoremstyle{definition}
\def\dashint{\operatorname%
	{\,\,\text{\bf-}\kern-.98em\DOTSI\intop\ilimits@\!\!}}
\def\\det{\text{\det}}
\def\.5{\frac{1}{2}}
\newcommand{\RN}[1]{%
	\textup{\uppercase\expandafter{\romannumeral#1}}%
}
\newcommand{\dist}{\text{dist}}
\renewcommand{\epsilon}{\varepsilon}
\newcounter{marnote}
\begin{document}
	
	\title[Optimal higher derivative estimates]{Optimal higher derivative estimates for Stokes equations with closely spaced rigid inclusions}
	
	\author[H.J. Dong]{Hongjie Dong}
	\address[H.J. Dong]{Division of Applied Mathematics, Brown University, 182 George Street, Providence, RI 02912, USA}
	\email{Hongjie\_Dong@brown.edu}
	\thanks{H. Dong was partially supported by the NSF under agreement DMS-2350129.}
	
	\author[H.G. Li]{Haigang Li}
	\address[H.G. Li]{School of Mathematical Sciences, Beijing Normal University, Laboratory of Mathematics and Complex Systems, Ministry of Education, Beijing 100875, China.}
	\email{hgli@bnu.edu.cn}
	\thanks{H. Li was partially Supported by Beijing Natural Science Foundation (No. 1242006), National Natural Science Foundation of China (No. 12471191), and the Fundamental Research Funds for the Central Universities (No. 2233200015).}
	
	\author[H.J. Teng]{Huaijun Teng}
	\address[H.J. Teng]{School of Mathematical Sciences, Beijing Normal University, Laboratory of Mathematics and Complex Systems, Ministry of Education, Beijing 100875, China.}
	\email{hjt2021@mail.bnu.edu.cn}
	
	\author[P.H. Zhang]{PeiHao Zhang}
	\address[P.H. Zhang]{School of Mathematical Sciences, Beijing Normal University, Laboratory of Mathematics and Complex Systems, Ministry of Education, Beijing 100875, China.}
	\email{phzhang@mail.bnu.edu.cn}


	\date{\today} 
	
	\subjclass[2020]{35J57, 35Q74, 74E30, 35B44}
	
	\keywords{Stokes equations, optimal higher derivative estimates, fluid-solid models, rigid particles}

	\begin{abstract}
		In this paper, we study the interaction between two closely spaced rigid inclusions suspended in a Stokes flow. It is well known that the stress significantly amplifies in the narrow region between the inclusions as the distance between them approaches zero. To gain deeper insight into these interactions, we derive high-order derivative estimates for the Stokes equation in the presence of two rigid inclusions in two dimensions. Our approach resonates with the method used to handle the incompressibility constraint in the standard convex integration scheme. Under certain symmetric assumptions on the domain, these estimates are shown to be optimal. As a result, we establish the precise blow-up rates of the Cauchy stress and its higher-order derivatives in the narrow region.
	\end{abstract}
	
	\maketitle

	\section{Introduction}
	
	Let \( D \subset \mathbb{R}^2 \) be a smooth, bounded domain containing two rigid inclusions (particles), denoted by \( D_1 \subset D \) and \( D_2 \subset D \). These inclusions are separated by a distance of \( \varepsilon \) and are positioned away from the outer boundary \( \partial D \). Specifically,
	\begin{equation*}
		\begin{split}
			\overline{D}_{1},\overline{D}_{2}\subset D,\quad
			\varepsilon:=\mbox{dist}(D_{1},D_{2})>0,\quad\mbox{dist}(D_{1}\cup D_{2},\partial D)>\kappa_{0}>0,
		\end{split}
	\end{equation*}
	where $\kappa_{0}$ is a constant independent of $\varepsilon$. For a fixed integer $m\ge0$, we assume that the $C^{m+1,\gamma}$ norms of $\partial{D}_{1}$ and $\partial{D}_{2}$ are bounded by some positive constant, independent of $\varepsilon$. To study the interaction between adjacent inclusions, we consider the following Stokes equations
	\begin{align}\label{maineqs}
		\begin{cases}
			\mu\Delta{\bf u}=\nabla p,\quad\nabla\cdot{\bf u}=0\quad&\hbox{in}\ 
			\Omega:=D\setminus\overline{D_1\cup D_2 },\\
			{\bf u}|_{+}={\bf u}|_{-},&\hbox{on}\ \partial{D}_{i},\,i=1,2,\\
			e({\bf u})=0,&\hbox{in}~~D_{i},\,i=1,2,\\ 
			\int_{\partial{D}_{i}}\boldsymbol{\psi}_{\alpha}\cdot\frac{\partial {\bf u}}{\partial \nu}\Big|_{+}
			\,ds-\int_{\partial{D}_{i}}p\boldsymbol{\psi}
			_{\alpha}\cdot\nu\ ds =0,&i=1,2,\,\alpha=1,2,3,\\    
			{\bf u}=\boldsymbol{\varphi},&\hbox{on}\ \partial{D},
		\end{cases}
	\end{align}
	where $\mu>0$, $e({\bf u})=\frac{1}{2}(\nabla{\bf u}+(\nabla{\bf u})^{T})$ is the strain tensor, $\frac{\partial{\bf u}}
	{\partial\nu}|_{+}:=\mu(\nabla{\bf u}+(\nabla{\bf u})^{T})\nu$,  $\nu$ is 
	the unit outer normal vector of $D_{i},~i=1,2$, and $\boldsymbol{\psi}_1=(1,0)^{T}, \boldsymbol{\psi}_2=(0,1)^{T}, \boldsymbol{\psi}_3=(x_{2},-x_{1})^{T}$ represent the rigid motions in dimension two. Throughout this paper, the subscript \( \pm \) denotes the limit from the outside and inside of the inclusions, respectively. Since \( D \) is bounded, the divergence-free condition \( \nabla \cdot \mathbf{u} = 0 \), together with the Gauss theorem, implies that the prescribed velocity field \( \boldsymbol{\varphi} \) must satisfy the compatibility condition \( \int_{\partial D} \boldsymbol{\varphi} \cdot \nu = 0 \) to ensure the existence and uniqueness of the solution \cite{Lady}. For a comprehensive study of the steady Stokes equations, we refer the interested reader to the monograph by Galdi \cite{GaldiBook}.
	
	The mathematical theory governing the motion of solid particles in a fluid is a classical problem in fluid mechanics, with origins tracing back to the foundational works of Stokes \cite{Stokes}, Kirchhoff \cite{Kirchhoff}, and Jeffery \cite{Jeffery}. Understanding the interaction mechanisms between particles is critical for practical applications involving particulate flows \cite{DDj}. Under low Reynolds number conditions, the fluid motion is governed by the Stokes equations. In the specific case of two-dimensional incompressible Stokes flow with two circular inclusions separated by a small distance \( \varepsilon \), Ammari, Kang, Kim, and Yu \cite{AKKY} achieved significant progress by employing bipolar coordinates. They derived an asymptotic formula for the concentrated stress, fully capturing the singular behavior of the stress and demonstrating a blow-up rate of \( \varepsilon^{-1/2} \). However, their methods have certain limitations. As highlighted by Kang in his ICM presentation \cite{K}, extending these techniques to inclusions of general shapes and higher-dimensional settings remains both an interesting and challenging task. Subsequently, Li and Xu \cite{LX1, LX2} advanced the field by addressing the challenges posed by pressure term and the divergence-free condition through an iterative approach based on energy estimates. They established that the optimal blow-up rate of the gradient is \( \varepsilon^{-1/2} \) in two dimensions and \( (\varepsilon \ln \varepsilon)^{-1} \) in three dimensions for inclusions with strictly convex boundaries near the narrow gap region. Furthermore, they derived estimates for higher-dimensional scenarios and second-order derivatives. It is worth noting that these optimal blow up rates are consistent with those found in the perfect conductivity problem \cite{AKL,AKL3,BLY,KangLY} and the linear elasticity problem with hard inclusions \cite{BLL,BLL2,KY,Li2021}. There is also a wealth of literature on elliptic equations and systems related to the (insulated) conductivity problem and linear elasticity problem. For example, references include Bonnetier and Vogelius \cite{BV}, Li and Vogelius \cite{LV}, Li and Nirenberg \cite{LN}, Bonnetier and Triki \cite{BT1,BT2}, Li and Yang \cite{LiYang}, Weinkove \cite{Ben}, and Dong, Li, and Yang \cite{DLY1}. Equation \eqref{maineqs} can also be viewed as a Stokes equation with discontinuous coefficients, where the viscosity coefficient is infinite in the inclusions. For Stokes equations with general discontinuous coefficients and their applications, we refer the reader to \cite{LIL07, DK18, KW18, CK19, KMW21} and the references therein.
	
	To effectively analyze the singular behavior of solutions, as well as to develop accurate numerical schemes, it is crucial to obtain higher-order derivative estimates—both from an engineering perspective and for the requirements of numerical experiments. Through the asymptotic expansion of the gradient, the higher-order derivatives of the solution to \eqref{maineqs} may also exhibit blow-up in the narrow region between the two inclusions. The optimal blow-up rate, if it exists, remains unknown. Therefore, establishing estimates for the higher-order derivatives of the solution and determining their blow-up rates is essential. In this paper, we provide a definitive answer to this question in two dimensions, precisely quantifying the singular behavior of the high-order derivatives for the Stokes system with closely spaced rigid inclusions.
	
	Our approach involves constructing a series of auxiliary functions and applying an energy iteration technique to demonstrate that these auxiliary functions effectively capture the singular behavior of the high-order derivatives of the solution. In doing so, we refine the energy iteration method introduced in \cite{LX1, LX2}, reducing the problem of estimating higher-order derivatives to analyzing the inhomogeneous terms. The primary challenge lies in the divergence-free condition of the Stokes system. In this regard, our approach aligns with the method used to handle the incompressibility constraint in the standard convex integration scheme (cf. \cite{CHL21} and references therein).
	
	It is worth noting that the energy iteration method was originally developed to address stress concentration issues in high-contrast elastic composite materials, a problem that traces back to the seminal work of Babuška \cite{Bab}, often referred to as the Babuška problem. By applying this method, it has been shown that the optimal blow-up rates of the gradient are \( \varepsilon^{-1/2} \) in two dimensions and \( (\varepsilon \ln \varepsilon)^{-1} \) in three dimensions \cite{BLL, BLL2, Li2021,LX24} as mentioned above. Kang and Yu \cite{KY} further demonstrated that the blow-up rate \( \varepsilon^{-1/2} \) is indeed optimal in two-dimensional cases by utilizing layer potential techniques. 
	More recently, we advanced the energy iteration method to establish that the optimal blow-up rates of the \( m \)-th order derivatives are \( \varepsilon^{-m/2} \) in two dimensions and \( |\ln \varepsilon|^{-1} \varepsilon^{-(m+1)/2} \) in three dimensions, as shown in \cite{DLTZ}. Additionally, we proved that the upper bound for the blow-up rate is \( \varepsilon^{-(m+1)/2} \) in higher dimensions. Based on these estimates for higher-order derivatives, a second-order vectorial finite element method is developed in \cite{LLL} to numerically simulate the stress concentration between close-to-touching inclusions. While there are technical connections between the work presented in this paper and previous studies on the Lamé system \cite{DLTZ}, much of the material here is novel, primarily due to the challenges introduced by the pressure terms and the divergence-free condition in the Stokes system.
	
	Before presenting our main results, we first provide the domain setup and introduce the relevant notation. To simplify matters, we translate and rotate the coordinate system such that there are two points: \( P_1 = (0, {\varepsilon}/{2}) \in \partial D_1 \) and \( P_2 = (0, -{\varepsilon}/{2}) \in \partial D_2 \), satisfying \( \dist(P_1, P_2) = \dist(\partial D_1, \partial D_2) = \varepsilon \). There exists a constant \( R \), independent of \( \varepsilon \), such that the portions of \( \partial D_1 \) and \( \partial D_2 \) near the origin can be expressed, respectively, as graphs:  
	\[
	x_2 = \frac{\varepsilon}{2} + h_1(x_1) \quad \text{and} \quad x_2 = -\frac{\varepsilon}{2} - h_2(x_1) \quad \text{for } |x_1| \leq 2R.
	\]
	Here, \( h_1, h_2 \in C^{m+1, \gamma}(B_{2R}(0)) \) and satisfy the following conditions:
	\begin{equation}        \label{h1h14}
	     \begin{aligned}
		&-\frac{\varepsilon}{2} - h_2(x_1) < \frac{\varepsilon}{2} + h_1(x_1), \quad h_1(x_1) + h_2(x_1) \geq \kappa |x_1|^2 \quad \text{for } |x_1| \leq 2R,\\
		&h_1(0) = h_2(0) = 0, \quad h_1'(0) = h_2'(0) = 0, \\
		&|h_i(x_1)| \leq C|x_1|^2, \quad |h_i'(x_1)| \leq C|x_1|, \quad |h_i^{(k)}(x_1)| \leq C, \quad 2 \leq k \leq m+1,~ i = 1, 2,
	\end{aligned}
	\end{equation}
	where \( \kappa > 0 \) is a constant. 
	
	For \( 0 \leq r \leq 2R \), we define the {\em neck region} between \( D_1 \) and \( D_2 \) as:  
	\[
	\Omega_r := \left\{ (x_1, x_2) \in \Omega : -\frac{\varepsilon}{2} - h_2(x_1) < x_2 < \frac{\varepsilon}{2} + h_1(x_1), \, |x_1| < r \right\}.
	\]
	The top and bottom boundaries of the neck region are denoted by:
	\begin{align*}
	\Gamma^+_r &:= \left\{ (x_1, x_2) \in \Omega : x_2 = \frac{\varepsilon}{2} + h_1(x_1), \, |x_1| < r \right\},\\
	\Gamma^-_r &:= \left\{ (x_1, x_2) \in \Omega : x_2 = -\frac{\varepsilon}{2} - h_2(x_1), \, |x_1| < r \right\}.    
	\end{align*}
	Throughout this paper, we use \( C \) to denote a universal constant, meaning that the value of \( C \) may vary from line to line. However, \( C \) depends only on \( \kappa_0 \), \( \kappa \), \( R \), \(\boldsymbol{\varphi}\), and the upper bounds of the \( C^{m+1} \)-norms of \( \partial D_1 \) and \( \partial D_2 \), but is independent of \( \varepsilon \).
	
	\subsection{Upper Bounds of the Higher Derivatives}
	
	By the standard theory for Stokes systems, we have the boundedness,  for $m\ge 0$, 
	$$
	\|\nabla^{m+1} {\bf u}\|_{L^{\infty}(\Omega\setminus\Omega_{R})}+\|\nabla^{m} (p-p(z))\|_{L^{\infty}(\Omega\setminus\Omega_{R})}\leq\,C
	$$ 
	for some fixed point $z\in\Omega\setminus\Omega_{R}$. 
	
	Our first result gives the higher derivative estimates for problem \eqref{maineqs} in the narrow region $\Omega_{R}$ in two dimensions.
	
	\begin{theorem}\label{main thm1n}
		Assume that \(D_{1}, D_{2}, D, \Omega\) are defined as above. Let \({\bf u} \in H^{1}(D; \mathbb{R}^2) \cap C^{m+1}(\bar{\Omega}; \mathbb{R}^2)\) and \(p \in L^2(D) \cap C^{m}(\bar{\Omega})\) be the solution to \eqref{maineqs}, with \(\boldsymbol{\varphi} \in C^{m+1,\alpha}(\partial D; \mathbb{R}^2)\), for an integer \(m \geq 0\) and \(0 < \alpha < 1\). Then, for sufficiently small \(0 < \varepsilon < 1/2\), the following estimates hold:
		
		\begin{equation}\label{mainest1n}
			\text{(i)} \quad |\nabla {\bf u}(x)| \leq \frac{C\sqrt{\varepsilon}}{\varepsilon + |x_1|^2}+C, \quad |p(x) - p(z_1, 0)| \leq \frac{C\sqrt{\varepsilon}}{(\varepsilon + |x_1|^2)^{3/2}}+C, \quad 
			x = (x_1, x_2) \in \Omega_{R},
		\end{equation}
		for some point \(z = (z_1, 0) \in \Omega_{R}\) with \(|z_1| = R/2\); and
		
		(ii) for \(m \geq 1\),
		\begin{equation}\label{mainest2n}
			|\nabla^{m+1} {\bf u}(x)| + |\nabla^m p(x)| \leq \frac{C\sqrt{\varepsilon}}{(\varepsilon + |x_1|^2)^{\frac{m+3}{2}}}+C, \quad 
			x = (x_1, x_2) \in \Omega_{R}.
		\end{equation}
	\end{theorem}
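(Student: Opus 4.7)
The plan is to reduce the problem to a small number of auxiliary Stokes problems and control each by an explicit singular ansatz plus an energy iteration for the remainder. Since $e(\mathbf{u})=0$ in each inclusion $D_i$, the restriction $\mathbf{u}|_{D_i}$ is a rigid motion, so
\[
\mathbf{u}=\sum_{i=1,2}\sum_{\alpha=1}^{3}c_i^{\alpha}\mathbf{v}_i^{\alpha}+\mathbf{v}_0,\qquad p=\sum c_i^{\alpha} q_i^{\alpha}+q_0,
\]
where each $(\mathbf{v}_i^{\alpha},q_i^{\alpha})$ solves the homogeneous Stokes system in $\Omega$ with $\mathbf{v}_i^{\alpha}=\boldsymbol{\psi}_{\alpha}$ on $\partial D_i$ and zero trace on the other inclusion and on $\partial D$, while $(\mathbf{v}_0,q_0)$ carries the outer datum $\boldsymbol{\varphi}$. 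The pair $(\mathbf{v}_0,q_0)$ is bounded in the neck uniformly in $\varepsilon$, and the scalars $c_i^{\alpha}$ are bounded by the force--torque relations in \eqref{maineqs} via the energy argument of Li--Xu \cite{LX1,LX2}; hence it suffices to establish \eqref{mainest1n}--\eqref{mainest2n} for each auxiliary pair $(\mathbf{v}_i^{\alpha},q_i^{\alpha})$.

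For each such auxiliary problem I construct in the neck $\Omega_{2R}$ a keystone profile $\bar{\mathbf{v}}_i^{\alpha}$ by linearly interpolating in $x_2$ between the rigid-motion boundary value on $\Gamma_R^+$ and $\Gamma_R^-$, multiplied by an $x_1$-cutoff. This matches the Dirichlet data exactly but is not divergence-free; I cancel the divergence by an explicit $x_2$-antiderivative corrector $\bar{\mathbf{w}}_i^{\alpha}$ whose profile is adapted to the neck geometry and vanishes on $\Gamma_R^{\pm}$, and I define the accompanying pressure $\bar q_i^{\alpha}$ from the leading Stokes balance in the thin channel. Since $h_1+h_2+\varepsilon\sim \varepsilon+x_1^2$ by \eqref{h1h14}, direct differentiation of these explicit formulas gives
\[
|\nabla^{m+1}(\bar{\mathbf{v}}_i^{\alpha}+\bar{\mathbf{w}}_i^{\alpha})(x)|+|\nabla^{m}(\bar q_i^{\alpha}-\bar q_i^{\alpha}(z_1,0))(x)|\le \frac{C\sqrt{\varepsilon}}{(\varepsilon+x_1^2)^{(m+3)/2}},
\]
which already captures the singular contribution in \eqref{mainest1n}--\eqref{mainest2n}.

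The remainder $(\mathbf{w},\tilde q):=(\mathbf{v}_i^{\alpha}-\bar{\mathbf{v}}_i^{\alpha}-\bar{\mathbf{w}}_i^{\alpha},\,q_i^{\alpha}-\bar q_i^{\alpha})$ solves an inhomogeneous Stokes problem with zero trace on $\partial D_1\cup\partial D_2$ and source controlled by the pointwise bounds above. Refining the energy-iteration scheme of \cite{LX1,LX2,DLTZ}, I establish a Caccioppoli-type inequality
\[
\int_{\Omega_t}|\nabla\mathbf{w}|^2\,dx\le \frac{C}{(s-t)^2}\int_{\Omega_s\setminus\Omega_t}|\mathbf{w}|^2\,dx+C\int_{\Omega_s}|\text{source}|^2\,dx,\qquad 0<t<s<R,
\]
and iterate it at the natural scale $s-t\sim\sqrt{\varepsilon+x_1^2}$. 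A rescaled interior Stokes estimate on neck-sized rectangles then upgrades these $L^2$ bounds to pointwise control of $\nabla^{m+1}\mathbf{w}$ and $\nabla^m\tilde q$, each derivative costing a factor $(\varepsilon+x_1^2)^{-1/2}$. Adding to the keystone estimates yields \eqref{mainest1n}--\eqref{mainest2n}.

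The principal obstacle is the interplay between the divergence-free constraint and the pressure in the keystone construction: the ansatz must be made divergence-free without sacrificing its sharp $\varepsilon$-profile. A naive Bogovskii correction would spread the corrector over all of $\Omega$ and lose powers of $\sqrt{\varepsilon+x_1^2}$ at each derivative, rendering the exponent $(m+3)/2$ in \eqref{mainest2n} unreachable for large $m$. The convex-integration-style strategy---correcting the divergence locally by a profile tuned to $h_1+h_2+\varepsilon$---keeps both bounds in \eqref{mainest2n} sharp and simultaneously pins down the precise form of $\bar q_i^{\alpha}$.
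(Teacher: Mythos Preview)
Your decomposition into auxiliary pairs is correct, but the claim that ``it suffices to establish \eqref{mainest1n}--\eqref{mainest2n} for each auxiliary pair $(\mathbf{v}_i^{\alpha},q_i^{\alpha})$'' is false, and your displayed keystone bound with the factor $\sqrt{\varepsilon}$ cannot hold. A profile interpolating $\boldsymbol{\psi}_\alpha$ on $\Gamma_R^{+}$ to $0$ on $\Gamma_R^{-}$ across a gap of width $\delta(x_1)\sim\varepsilon+x_1^2$ necessarily has gradient of order $\delta(x_1)^{-1}$, not $\sqrt{\varepsilon}\,\delta(x_1)^{-1}$; for $\alpha=2$ the divergence correction even forces $|\nabla\mathbf{v}_i^{2}|\sim\delta(x_1)^{-3/2}$. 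The $\sqrt{\varepsilon}$ in the theorem comes from the \emph{coefficients}, not from the auxiliary problems: one must regroup
\[
\mathbf{u}=\sum_{\alpha=1}^{3}(c_1^{\alpha}-c_2^{\alpha})\mathbf{v}_1^{\alpha}+\mathbf{u}_b,\qquad \mathbf{u}_b=\sum_{\alpha}c_2^{\alpha}(\mathbf{v}_1^{\alpha}+\mathbf{v}_2^{\alpha})+\mathbf{v}_0,
\]
observe that $\mathbf{v}_1^{\alpha}+\mathbf{v}_2^{\alpha}=\boldsymbol{\psi}_\alpha$ on \emph{both} inclusion boundaries so that $\mathbf{u}_b$ has bounded $C^{m+1}$ norm in the neck, and then use the force--torque relations (Proposition~\ref{lemCialpha}, from \cite{LX1}) to get $|c_1^{1}-c_2^{1}|,\,|c_1^{3}-c_2^{3}|\le C\sqrt{\varepsilon}$ and, crucially, $|c_1^{2}-c_2^{2}|\le C\varepsilon^{3/2}$, the last exactly compensating the extra half-power of $\delta^{-1}$ in the $\alpha=2$ problem. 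Merely knowing the $c_i^{\alpha}$ are bounded, as you invoke, yields no $\sqrt{\varepsilon}$ gain at all.

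There is a second, independent gap for large $m$. A single keystone-plus-corrector leaves a Stokes source of size $|\mathbf{f}|\le C\delta^{-1}$, and your energy iteration together with rescaled interior estimates then gives only $|\nabla^{m+1}\mathbf{w}|\le C\delta^{-m}$ for the remainder; for $m\ge 4$ this exceeds the target $\delta^{-(m+3)/2}$ and swamps the keystone contribution. The paper (Sections~\ref{sec_u1}--\ref{sec_u3}) instead constructs a \emph{sequence} of divergence-free correctors $(\mathbf{v}^l,\bar p_l)_{l=1}^{m+1}$ driving the cumulative residual down to $|\nabla^{k}\mathbf{f}^{m+1}|\le C\delta^{m-1-k}$, after which Proposition~\ref{prop3.3} makes the remainder $O(1)$ in $C^{m+1}$. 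Your sketch provides no mechanism for this iterative reduction of the source.
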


	\begin{remark}  
		While for \( m = 0, 1 \), the estimates \eqref{mainest1n} and \eqref{mainest2n} are consistent with the previous results obtained in \cite{LX1}, where \( h_1(x_1) = h_2(x_1) = \frac{1}{2}|x_1|^2 \), the general case of \( h_1(x_1) \) and \( h_2(x_1) \) presents significantly greater challenges. The construction of auxiliary functions becomes considerably more intricate, particularly due to the asymmetry between \( h_1 \) and \( h_2 \). In this paper, we have refined these constructions, allowing for a complete characterization of the singular behavior of derivatives of all orders.  
	\end{remark}
	
	If \( D_1 \cup D_2 \) and \( D \) are symmetric with respect to both axes and \( \boldsymbol{\varphi}(-x) = -\boldsymbol{\varphi}(x) \), more precise upper bounds for the derivatives can be derived, reflecting a reduced singularity of half an order. In this context, we further assume that \( h_1(x_1) = h_2(x_1) = \frac{1}{2}h(x_1) \) in \( \Omega_{2R} \).
	
	\begin{theorem}\label{main thm1}
		Assume that \(D_{1}, D_{2}, D, \Omega\) are defined as above. Let \({\bf u} \in H^{1}(D; \mathbb{R}^2) \cap C^{m+1}(\bar{\Omega}; \mathbb{R}^2)\) and \(p \in L^2(D) \cap C^{m}(\bar{\Omega})\) be the solution to \eqref{maineqs}, where \(\boldsymbol{\varphi} \in C^{m+1,\alpha}(\partial D; \mathbb{R}^2)\), for an integer \(m \geq 0\) and \(0 < \alpha < 1\), and \(\boldsymbol{\varphi}(-x) = -\boldsymbol{\varphi}(x)\). Then, for sufficiently small \(0 < \varepsilon < 1/2\), the following estimate holds for any \(m \geq 0\):
		\begin{equation*}
			|\nabla^{m+1}{\bf u}(x)| + |\nabla^m (p(x) - p(z_1, 0))| \leq \frac{C\sqrt{\varepsilon}}{(\varepsilon + |x_1|^2)^{\frac{m+2}{2}}}+C \quad 
			\text{for}~x = (x_1, x_2) \in \Omega_{R},
		\end{equation*}
		where \(z = (z_1, 0) \in \Omega_{R}\) with \(|z_1| = R/2\).
		
		As a consequence, for the Cauchy stress tensor \(\sigma[{\bf u}, p] = 2\mu e({\bf u}) - p\mathbb{I}\), where \(\mathbb{I}\) is the identity matrix, we have
		\begin{equation*}
			\big|\nabla^m \sigma[{\bf u}, p - p(z_1, 0)]\big| \leq \frac{C\sqrt{\varepsilon}}{(\varepsilon + |x_1|^2)^{\frac{m+2}{2}}}+C \quad 
			\text{for}~x = (x_1, x_2) \in \Omega_{R}.
		\end{equation*}
	\end{theorem}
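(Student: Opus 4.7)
The plan is to exploit the odd symmetry of $({\bf u}, p)$ inherited from the antisymmetry of $\boldsymbol{\varphi}$ and the mirror symmetry of the geometry ($h_1 = h_2$, and $D$, $D_1 \cup D_2$ symmetric about both axes) to extract a half-order cancellation on top of the estimate in Theorem~\ref{main thm1n}. As a first step I would verify that ${\bf u}(-x) = -{\bf u}(x)$ and $p(-x) = p(x)$ (after normalizing the additive pressure constant): the reflection $x \mapsto -x$ swaps $D_1 \leftrightarrow D_2$, preserves $\boldsymbol{\psi}_{1,2}$, sends $\boldsymbol{\psi}_3 \mapsto -\boldsymbol{\psi}_3$, and flips $\boldsymbol{\varphi}$, so $\bigl(-{\bf u}(-x),\, p(-x)\bigr)$ solves \eqref{maineqs} and uniqueness forces the identity. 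In particular $p(z_1, 0) = p(-z_1, 0)$ and $\sigma[{\bf u}, p - p(z_1, 0)]$ is even in $x$.

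Next, I would re-use the linear decomposition from the proof of Theorem~\ref{main thm1n},
\begin{equation*}
{\bf u} = {\bf u}_0 + \sum_{i=1,2}\sum_{\alpha=1}^3 C_i^\alpha {\bf u}_i^\alpha, \qquad p = p_0 + \sum_{i,\alpha} C_i^\alpha p_i^\alpha,
\end{equation*}
where $({\bf u}_i^\alpha, p_i^\alpha)$ is the Stokes extension with boundary value $\boldsymbol{\psi}_\alpha$ on $\partial D_i$ and zero on the other solid boundaries. Reflecting the problem defining $({\bf u}_1^\alpha, p_1^\alpha)$ yields
\begin{equation*}
{\bf u}_2^\alpha(x) = \eta_\alpha {\bf u}_1^\alpha(-x), \quad p_2^\alpha(x) = -\eta_\alpha p_1^\alpha(-x), \quad \eta_1 = \eta_2 = 1, \ \eta_3 = -1,
\end{equation*}
and substituting this into ${\bf u}(-x) = -{\bf u}(x)$ together with the uniqueness of the decomposition forces $C_2^\alpha = -\eta_\alpha C_1^\alpha$. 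Every pair then collapses to
\begin{equation*}
C_1^\alpha {\bf u}_1^\alpha(x) + C_2^\alpha {\bf u}_2^\alpha(x) = C_1^\alpha\bigl[{\bf u}_1^\alpha(x) - {\bf u}_1^\alpha(-x)\bigr], \quad C_1^\alpha p_1^\alpha(x) + C_2^\alpha p_2^\alpha(x) = C_1^\alpha\bigl[p_1^\alpha(x) + p_1^\alpha(-x)\bigr].
\end{equation*}

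The third step is to show that these antisymmetric (for velocity) and symmetric (for pressure) combinations are one half-order less singular than the individual modes, namely
\begin{equation*}
\bigl|\nabla^{m+1}\bigl({\bf u}_1^\alpha(x) - {\bf u}_1^\alpha(-x)\bigr)\bigr| + \bigl|\nabla^{m}\bigl(p_1^\alpha(x) + p_1^\alpha(-x) - c_\alpha\bigr)\bigr| \le \frac{C\sqrt{\varepsilon}}{(\varepsilon + |x_1|^2)^{(m+2)/2}}
\end{equation*}
for a suitable constant $c_\alpha$. The mechanism is that, under $h_1 = h_2$, the explicit leading-order auxiliary ansatz $(\bar{\bf u}_1^\alpha, \bar p_1^\alpha)$ used in the proof of Theorem~\ref{main thm1n} has a parity structure in $x$ for which the dominant (saturating) singular pieces are killed by the antisymmetric/symmetric combination, leaving residues carrying one extra factor of $|x_1|$ near the bottleneck --- equivalently, a factor $(\varepsilon + |x_1|^2)^{1/2}$. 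Re-running the energy-iteration scheme of Theorem~\ref{main thm1n} with this refined ansatz as the new leading term then closes the improved estimate; combining with the uniform bound $|C_i^\alpha| \le C$ from the stress-balance conditions and the interior regularity of $({\bf u}_0, p_0)$ on $\Omega \setminus \Omega_R$ yields the stated bound, and the Cauchy-stress estimate is immediate from $\sigma = 2\mu e({\bf u}) - p\mathbb{I}$.

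The main obstacle is this third step, specifically preserving the parity cancellation through the divergence-free corrector and the simultaneous pressure reconstruction. The second component $(\bar{\bf u}_1^\alpha)_2$ is defined by $x_2$-integration of $-\partial_{x_1}(\bar{\bf u}_1^\alpha)_1$, which alters parity in $x_1$; one must choose the integration constants and the higher-order corrections carefully so that the parity structure is respected for both velocity components and for the pressure simultaneously, without introducing spurious modes that would destroy the half-order cancellation in the residual energy estimate.
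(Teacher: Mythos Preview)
Your first two steps --- the odd/even symmetry of $({\bf u},p)$ and the reflection identity ${\bf u}_2^\alpha(x)=\eta_\alpha{\bf u}_1^\alpha(-x)$ with $\eta_1=\eta_2=1$, $\eta_3=-1$, leading to $C_2^\alpha=-\eta_\alpha C_1^\alpha$ --- are correct and in particular rederive the key fact $C_1^3=C_2^3$ (this is exactly \eqref{cxiangdeng} in the paper). The gap is in your third step.

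The displayed bound
\[
\bigl|\nabla^{m+1}\bigl({\bf u}_1^\alpha(x)-{\bf u}_1^\alpha(-x)\bigr)\bigr|\le \frac{C\sqrt{\varepsilon}}{(\varepsilon+|x_1|^2)^{(m+2)/2}}
\]
is false for $\alpha=1,2$. Take $\alpha=1$: in the symmetric case $h_1=h_2$ the leading ansatz is $({\bf v}_1^1)^{(1)}=x_2/\delta(x_1)+\tfrac12$, whose odd part in $x$ is $x_2/\delta(x_1)$, and $\partial_{x_1}^m\partial_{x_2}\bigl(2x_2/\delta\bigr)=2\partial_{x_1}^m\delta^{-1}$ is of order $\delta^{-(m+2)/2}$ with \emph{no} factor $\sqrt{\varepsilon}$. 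So antisymmetrizing ${\bf u}_1^1$ does not produce the $\sqrt{\varepsilon}$, and your ``extra factor of $|x_1|$'' heuristic does not apply to the leading piece. The $\sqrt{\varepsilon}$ in the final estimate comes from the coefficients: your own relation $C_1^\alpha-C_2^\alpha=2C_1^\alpha$ for $\alpha=1,2$ combined with Proposition~\ref{lemCialpha} gives $|C_1^1|\le C\sqrt{\varepsilon}$ and $|C_1^2|\le C\varepsilon^{3/2}$, yet you only invoke $|C_i^\alpha|\le C$.

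More importantly, the half-order improvement in the $\delta$-exponent for the $\alpha=1$ mode is not a parity cancellation of ${\bf u}_1^1(x)-{\bf u}_1^1(-x)$ at all. In the paper it comes from Proposition~\ref{prop2.1g}: when $h_1=h_2$ the correction $F=-3(h_1-h_2)/\delta$ in \eqref{deff} vanishes, the divergence-free corrector can be written via a one-dimensional Green function (see \eqref{v1kdef}), and one obtains $|\nabla^{m+1}{\bf u}_1^1|\le C\delta^{-(m+2)/2}$ for ${\bf u}_1^1$ itself. The paper then combines this with $|C_1^1-C_2^1|\le C\sqrt{\varepsilon}$, uses $|C_1^2-C_2^2|\le C\varepsilon^{3/2}$ together with Proposition~\ref{prop2.2} for $\alpha=2$, and uses $C_1^3=C_2^3$ to drop $\alpha=3$. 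Your symmetry observations are the right starting point, but to close the argument you need Proposition~\ref{prop2.1g} and the refined coefficient bounds from Proposition~\ref{lemCialpha}, not a parity cancellation in ${\bf u}_1^\alpha(x)-{\bf u}_1^\alpha(-x)$.
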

	
	\begin{remark}
		The upper bounds for higher derivatives in Theorem \ref{main thm1} are proved to be sharp in Theorem \ref{thmlowerbound}.
	\end{remark}

	\subsection{Lower Bounds of the Higher Derivatives}
	To establish the optimality of the blow-up rates derived in Theorem \ref{main thm1}, we will establish a lower bound for \( |\nabla^{m+1} {\bf u}(x)| \) that exhibits the same blow-up rate as identified in the theorem. This is done under the assumption that \( D_1 \cup D_2 \) and \( D \) are symmetric with respect to both the \( x_1 \)-axis and the \( x_2 \)-axis. Additionally, we assume that \( h_1(x_1) \) and \( h_2(x_1) \) are quadratic and symmetric with respect to the plane \( \{x_2 = 0\} \). Specifically, we take \( h_1(x_1) = h_2(x_1) = \frac{1}{2} |x_1|^2 \) for \( |x_1| \leq 2R \). We define a linear and continuous functional of ${\boldsymbol\varphi}$:
	\begin{equation*}
		\tilde b_{j}^{*\alpha}[{\boldsymbol\varphi}]:=
		\int_{\partial D_j^0}{\boldsymbol\psi}_\alpha\cdot\sigma[{\bf 
			u}^*,p^*]\nu,\quad\alpha=1,2,3,~j=1,2,
	\end{equation*}
	where $({\bf u}^*,p^*)$ verify
	\begin{align*}
		\begin{cases}
			\mu\Delta{\bf u}^{*}=\nabla p^{*},\quad\nabla\cdot {\bf 
				u}^{*}=0\quad&\hbox{in}\ \Omega^{0},\\
			{\bf 
				u}^{*}=\sum_{\alpha=1}^{3}C_{*}^{\alpha}{\boldsymbol\psi}_{\alpha}&\hbox{on}\
			\partial D_{1}^{0}\cup\partial D_{2}^{0},\\
			\int_{\partial{D}_{1}^{0}}{\boldsymbol\psi}_\alpha\cdot\sigma[{\bf 
				u}^*,p^{*}]\nu+\int_{\partial{D}_{2}^{0}}{\boldsymbol\psi}_\alpha\cdot\sigma[{\bf
				u}^*,p^{*}]\nu=0,&\alpha=1,2,3,\\
			{\bf u}^{*}={\boldsymbol\varphi}&\hbox{on}\ \partial{D},
		\end{cases}
	\end{align*}
	where $D_{1}^{0}:=\{x\in\mathbb R^{2}~\big|~ x+P_{1}\in D_{1}\},\quad~ 
	D_{2}^{0}:=\{x\in\mathbb R^{2}~\big|~ x+P_{2}\in D_{2}\}$, 
	$\Omega^{0}:=D\setminus\overline{D_{1}^{0}\cup D_{2}^{0}}$ and the constants 
	$C_{*}^{\alpha}$, $\alpha=1,2,3$, are uniquely determined by the solution 
	$({\bf u}^*,p^*)$. 
	\begin{theorem}\label{thmlowerbound}
		Let $D_1,D_2\subset D$ be defined as above and let ${\bf u}\in 
		H^{1}(D;\mathbb R^{2})\cap C^{m}(\bar{\Omega};\mathbb R^{2})$ be a solution 
		to \eqref{maineqs}. Suppose that there exists a $\boldsymbol{\varphi}$ such that 
		$\boldsymbol{\varphi}(-x)=-\boldsymbol{\varphi}(x)$ and 
		$b_{1}^{*1}[\boldsymbol{\varphi}]\neq 0$. Then for $m\ge 0$ and 
		sufficiently small $0<\varepsilon<1/2$, there exists a small constant $r>0$ 
		which may depend on $m$, such that
		\begin{align}\label{lower}
			|\partial_{x_1}^{m}\partial_{x_{2}}{\bf 
				u}^{(1)}(r\sqrt{\varepsilon},0)|\ge 
			C|b_{1}^{*1}[\boldsymbol{\varphi}]|\varepsilon^{-\frac{m+1}{2}}, 
		\end{align}
		and consequently,
		$$
		\big|\partial_{x_1}^m\sigma[{\bf u},p-p(z_1,0)]\big|(r\sqrt{\varepsilon},0)\geq 
		C|\tilde b_{1}^{*1}[{\boldsymbol\varphi}]|\varepsilon^{-\frac{m+1}{2}}.
		$$
		
	\end{theorem}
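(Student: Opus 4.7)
The strategy is the standard optimality argument for close-to-touching problems: decompose the solution into a sum of auxiliary modes, isolate the dominant singular mode, and track its coefficient. First, for each $i\in\{1,2\}$ and $\alpha\in\{1,2,3\}$, I would introduce the Stokes field $({\bf v}_i^{\alpha},q_i^{\alpha})$ in $\Omega$ with boundary datum $\boldsymbol{\psi}_\alpha$ on $\partial D_i$ and $0$ on $\partial D_{3-i}\cup\partial D$, together with $({\bf u}_0,p_0)$ satisfying ${\bf u}_0=\boldsymbol{\varphi}$ on $\partial D$ and ${\bf u}_0=0$ on $\partial D_1\cup\partial D_2$, to obtain the representation
\begin{equation*}
{\bf u}=\sum_{i=1}^{2}\sum_{\alpha=1}^{3}C_{i}^{\alpha}{\bf v}_{i}^{\alpha}+{\bf u}_{0},
\end{equation*}
with the six constants $C_i^{\alpha}$ uniquely determined by the six force/torque balance conditions in \eqref{maineqs}. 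The antisymmetry $\boldsymbol{\varphi}(-x)=-\boldsymbol{\varphi}(x)$ combined with the full axial symmetry of the configuration gives ${\bf u}(-x)=-{\bf u}(x)$, and together with the parities of the rigid motions $\boldsymbol{\psi}_\alpha$ this isolates $C:=C_1^1=-C_2^1$ as the dominant singular coefficient. The remaining modes contribute either zero values on the axis (the ``squeeze'' combination $C_1^2=-C_2^2$ yields an $x_2$-even $v^{(1)}$, killing $\partial_{x_2}v^{(1)}$ on $\{x_2=0\}$) or strictly subdominant terms at $(r\sqrt{\varepsilon},0)$.

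Next I would solve for $C$ from the reduced linear system. Its matrix entries are $\int_{\partial D_i}\boldsymbol{\psi}_\alpha\cdot\sigma[{\bf v}_j^\beta,q_j^\beta]\nu$; by the blow-up analysis underlying \cite{LX1} only the ``$\alpha=\beta=1$'' block diverges, at rate $\varepsilon^{-1/2}$, while all other entries remain uniformly bounded. The right-hand side entries $\int_{\partial D_i}\boldsymbol{\psi}_\alpha\cdot\sigma[{\bf u}_0,p_0]\nu$ converge as $\varepsilon\to 0$ to $b_i^{*\alpha}[\boldsymbol{\varphi}]$, defined through the limiting problem in the statement. Projecting the system onto the antisymmetric sector and applying Cramer's rule yields
\begin{equation*}
|C|\ge c\,|b_1^{*1}[\boldsymbol{\varphi}]|\,\sqrt{\varepsilon}
\end{equation*}
for all sufficiently small $\varepsilon$.

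In the final step I would evaluate the singular mode pointwise. For $h_1(x_1)=h_2(x_1)=|x_1|^2/2$, a lubrication ansatz (as in \cite{LX1} and refined by the auxiliary-function construction of this paper) identifies the leading part of ${\bf v}_1^1-{\bf v}_2^1$ in $\Omega_R$ as the Couette profile ${\bf V}^{(1)}(x_1,x_2)=2x_2/\delta(x_1)$, with $\delta(x_1):=\varepsilon+h_1(x_1)+h_2(x_1)=\varepsilon+x_1^2$, plus an explicitly controlled corrector. Direct differentiation gives
\begin{equation*}
\partial_{x_1}^m\partial_{x_2}{\bf V}^{(1)}(x_1,0)=2\,\partial_{x_1}^m\frac{1}{\delta(x_1)},
\end{equation*}
which at $x_1=r\sqrt{\varepsilon}$ has magnitude comparable to $\varepsilon^{-1-m/2}$ with a nonzero leading coefficient provided $r>0$ avoids the at most $m$ roots of $s\mapsto (d/ds)^m(1+s^2)^{-1}$. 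Multiplying by $|C|\gtrsim|b_1^{*1}[\boldsymbol{\varphi}]|\sqrt{\varepsilon}$ gives the desired bound $|b_1^{*1}[\boldsymbol{\varphi}]|\varepsilon^{-(m+1)/2}$; the Cauchy-stress estimate follows because $\partial_{x_2}{\bf u}^{(1)}(x_1,0)$ is, up to known lower-order terms, the $(1,2)$-entry of $\sigma[{\bf u},p-p(z_1,0)]/\mu$ on the axis, and additional $x_1$-derivatives preserve the leading behavior. The main obstacle will be this pointwise matching: an $L^\infty$ bound on the corrector is insufficient, since the corrector can a priori be of the same order as the Couette leading term, so one must combine the refined pointwise auxiliary-function estimates developed earlier in this paper with a careful choice of the small parameter $r$ to ensure that at the specific point $(r\sqrt{\varepsilon},0)$ the explicit Couette leading contribution strictly dominates the corrector error.
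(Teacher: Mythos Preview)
Your proposal is correct and follows essentially the same route as the paper: decompose into modes, invoke the coefficient lower bound $|C_1^1-C_2^1|\ge C|\tilde b_1^{*1}[\boldsymbol\varphi]|\sqrt{\varepsilon}$ (which the paper simply cites from \cite[(5.37)]{LX1}), and evaluate the Couette leading term $\partial_{x_1}^m\partial_{x_2}(x_2/\delta(x_1))=\partial_{x_1}^m\delta^{-1}$ at $(r\sqrt\varepsilon,0)$. Your concern about the corrector is slightly overstated: the auxiliary-function estimates \eqref{highvpfesti} of Proposition~\ref{prop2.1g} already show that the higher correctors ${\bf v}_1^l$, $l\ge 2$, satisfy $|\partial_{x_1}^m\partial_{x_2}({\bf v}_1^l)^{(1)}|\le C\delta^{-m/2}$, a full power of $\delta$ better than the leading Couette contribution, so at $(r\sqrt\varepsilon,0)$ they are uniformly $O(\varepsilon^{-m/2})$ and no delicate matching beyond avoiding the zeros of $s\mapsto (d/ds)^m(1+s^2)^{-1}$ is required.

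The one place you diverge from the paper is the handling of the $\alpha=2$ mode. You exploit the parity $C_1^2=-C_2^2$ together with the observation that $({\bf u}_1^2-{\bf u}_2^2)^{(1)}$ is even in $x_2$, so that $\partial_{x_1}^m\partial_{x_2}$ of this combination vanishes identically on $\{x_2=0\}$. The paper instead keeps the grouping $(C_1^2-C_2^2){\bf u}_1^2$ and uses the quantitative decay $|C_1^2-C_2^2|\le C\varepsilon^{3/2}$ from Proposition~\ref{lemCialpha} combined with the pointwise bounds \eqref{xjyy} and \eqref{v2lest} on $\partial_{x_1}^m\partial_{x_2}({\bf u}_1^2)^{(1)}$ to obtain the same $O(\varepsilon^{-m/2})$ conclusion. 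Your parity argument is slightly cleaner and avoids invoking the sharper bound on $|C_1^2-C_2^2|$, but both routes lead to the same result.
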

	
	\begin{remark}
		The optimality of the gradient estimates in Theorem \ref{main thm1n} is established through \eqref{lower} with \(m=0\). However, for general functions \(h_{1}(x_{1})\) and \(h_{2}(x_{1})\), the optimality of the high-order derivative estimates in Theorem \ref{main thm1n} remains uncertain. We conjecture that the asymmetry \(h_{1}(x_{1}) \neq h_{2}(x_{1})\) could introduce additional singularities. Investigating and classifying these new singularities would be both theoretically interesting and practically useful for applications.
	\end{remark}

It is worth noting that our approach breaks down in a few places in three dimensions. In particular, the construction of auxiliary functions such as $F$ and $G$ in the proof of Proposition \ref{prop2.1} cannot be easily extended to the three dimensional case. We are going to address this in a future work.

	The remainder of this paper is organized as follows: In Section \ref{sec_pri}, we establish a structured framework for the energy method. Specifically, we introduce an auxiliary function pair \(({\bf v},\bar{p})\) that satisfies the same boundary conditions as \(({\bf u},p)\) on \(\Gamma^{\pm}_{2R}\), with \(\nabla \cdot {\bf v} = 0\) in \(\Omega_{2R}\). This allows us to reduce the local estimates of the difference \( |\nabla^{m+1}({\bf u} - {\bf v})| + |\nabla^m(p - \bar{p})| \) to the control of terms on the right-hand side of the equations, denoted by \({\bf f} := \mu \Delta {\bf v}(x) - \nabla \bar{p}(x)\). By Proposition \ref{prop3.3}, it suffices to construct the auxiliary function pair \(({\bf v}, \bar{p})\) to satisfy the conditions on \({\bf f}\) and \(\nabla^k {\bf f}\) for an explicit boundary value problem.
	
	This process is carried out in Sections \ref{sec_u1}, \ref{sec_u2}, and \ref{sec_u3}, for the Stokes system with specified Dirichlet boundary data \(\boldsymbol{\psi}_{\alpha}\) on \(\partial{D}_{1}\) and \(\partial{D}_{2}\), where \(\alpha = 1, 2, 3\) (see \eqref{u,peq1} below). The main method for establishing estimates for the high-order derivatives of the solution is explored in detail. For each \(\alpha\), we construct a sequence of auxiliary functions \({\bf v}_\alpha^m\) and corresponding \(\bar{p}_m\) in \(\Omega_{2R}\), ensuring that the quantity 
	\[
	\bigg|\sum_{l=1}^{m+1} (\mu \Delta {\bf v}_{\alpha}^{l} - \nabla \bar{p}_{l})(x)\bigg|
	\] 
	satisfies a suitable pointwise upper bound in \(\Omega_{2R}\), as required by Proposition \ref{prop3.3}. We then demonstrate that \(\nabla^{m+1}\sum_{l=1}^{m+1} {\bf v}_\alpha^l\) and \(\nabla^m \sum_{l=1}^{m+1} \bar{p}_l\) effectively capture all the singularities of \(\nabla^{m+1} {\bf u}_{1}^{\alpha}\) and \(\nabla^m p_1^{\alpha}\), up to an \(O(1)\) term.
	
	In Section \ref{sec3}, we prove Theorems \ref{main thm1n} and \ref{main thm1} by utilizing the estimates established in Sections \ref{sec_u1}, \ref{sec_u2}, and \ref{sec_u3}. Finally, we prove Theorem \ref{thmlowerbound}, confirming the optimality of the upper bounds by deriving a lower bound under certain symmetry conditions on the domain and boundary data.
	
	
	\section{Preliminary results}\label{sec_pri}
	
	The objective of this section is to establish estimates for the higher-order derivatives of the solution to the Stokes equation with the right-hand side \({\bf f}\). The results presented here apply in all dimensions \(d \geq 2\). Let \(D\) be a domain in \(\mathbb{R}^d\), with two closely positioned subdomains \(D_1\) and \(D_2\), and denote \(\Omega = D \setminus \overline{D_1 \cup D_2}\). We consider the Stokes system
	\begin{equation}\label{u1}
		\begin{cases}
			-\mu\Delta{\bf u}+\nabla p=0\quad&\mathrm{in}\,\,\Omega,\\
			\nabla\cdot {\bf u}=0\quad&\mathrm{in}\,\,\Omega,\\
			{\bf u}={\bf g}\quad&\mathrm{on}\,\,\partial \Omega,
		\end{cases}
	\end{equation}
	with given smooth boundary data \({\bf g}\) satisfying  \( \int_{\partial \Omega} {\bf g} \cdot \nu = 0 \). The existence and uniqueness of the solution to \eqref{u1} are well known. By standard regularity theory, we have
	\[
	\|{\bf u}\|_{C^{m+1}(\Omega \setminus \Omega_R)} + \|p\|_{C^m(\Omega \setminus \Omega_R)} \leq C.
	\]
	Let \(({ \bf v}, \bar{p})\) be a known function pair satisfying \({\bf v} = {\bf u} = {\bf g}\) on \(\Gamma^+_{2R} \cup \Gamma^-_{2R}\) and \(\nabla \cdot {\bf v} = 0\) in \(\Omega_{2R}\), with
	\[
	\|{\bf v}\|_{C^{m+1}(\Omega \setminus \Omega_R)} + \|\bar{p}\|_{C^m(\Omega \setminus \Omega_R)} \leq C.
	\]
	Define the difference between \(({ \bf u}, p)\) and \(({ \bf v}, \bar{p})\) as
	\[
	{\bf w} := {\bf u} - {\bf v} \quad \text{and} \quad q := p - \bar{p}.
	\]
	Then, \(({ \bf w}, q)\) satisfies the following boundary value problem for the nonhomogeneous Stokes equations in the narrow region:
	\begin{equation}\label{w1}
		\begin{cases}
			-\mu\Delta{\bf w}+\nabla q={\bf f}\quad&\mathrm{in}\,\,\Omega_{2R},\\
			\nabla\cdot {\bf w}=0\quad&\mathrm{in}\,\,\Omega_{2R},\\
			{\bf w}=0\quad&\mathrm{on}\,\,\Gamma^+_{2R}\cup\Gamma^-_{2R},
		\end{cases}
	\end{equation}
	where \({\bf f} := \mu \Delta {\bf v} - \nabla \bar{p}\). Denote the vertical distance at \(x_1\) in the narrow region between \(D_1\) and \(D_2\) by
	\[
	\delta(x_1) := \varepsilon + h_1(x_1) + h_2(x_1), \quad |x_1| \leq 2R.
	\]
    
	We have the following higher derivative estimates for the solution to \eqref{w1}.
	
	\begin{prop}\label{prop3.3}
		Let ${\bf w}\in H^{1}(\Omega_{\frac{3}{2}R})\cap L^{\infty}(\Omega_{2R}
		\setminus\Omega_{R})$ and $p\in L^{\infty}(\Omega_{2R}\setminus\Omega_{R})$ be 
		the solution to \eqref{w1}. For any $m\ge1$, if
		\begin{equation*}
			|\nabla^k{\bf f}(x)|\leq 
			C\delta(x')^{l-k}, \quad\forall\,0\le k\leq m,~l\ge -\frac{3}{2}\quad\mbox{for}
			~x=(x',x_d)\in\Omega_{2R},
		\end{equation*}
		then it holds that
		\begin{equation*}
			\|\nabla{\bf w}\|_{L^{\infty}(\Omega_{\delta(z')/2}(z'))}\leq 
			C\delta(z')^{l+1},\quad z=(z',z_d) \in \Omega_{R},
		\end{equation*}
		and
		\begin{equation*}
			\|\nabla^{m+1}{\bf w}\|_{L^{\infty}(\Omega_{\delta(z')/2}(z'))} +\|
			\nabla^{m}q\|
			_{L^{\infty}(\Omega_{\delta(z')/2}(z'))}\le C\delta(z')^{l+1-m},\quad z =(z',z_d) 
			\in \Omega_{R}.
		\end{equation*}
	\end{prop}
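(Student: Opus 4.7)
The plan is to prove Proposition~\ref{prop3.3} in two stages: first, establish the $L^{\infty}$ bound on $\nabla\mathbf{w}$ via an energy iteration along the narrow channel in the spirit of \cite{LX1,LX2}; second, bootstrap to the higher-order estimate through a rescaling that renormalizes the neck to unit size and then applies classical Stokes regularity on a fixed-geometry domain.

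For the first stage, fix $z=(z',z_d)\in\Omega_{R}$ and write $\delta=\delta(z')$. Consider the energy $F(t):=\int_{\Omega_{t}(z')}|\nabla\mathbf{w}|^{2}\,dx$ for $t\in[\delta,R]$, and test \eqref{w1} against $\mathbf{w}\eta$ with a smooth cutoff $\eta=\eta(x')$ isolating $\Omega_{t}(z')$. Integration by parts yields the coercive term together with a mixed pressure term $\int q\,\mathbf{w}\cdot\nabla\eta$, since $\nabla\cdot(\mathbf{w}\eta)=\mathbf{w}\cdot\nabla\eta\neq 0$. This term is handled by a Bogovskii-type argument on the annular strip $\Omega_{t}\setminus\Omega_{s}$: solve $\nabla\cdot\Phi=\mathbf{w}\cdot\nabla\eta$ with $\Phi$ compactly supported in the strip (the compatibility $\int\mathbf{w}\cdot\nabla\eta=0$ follows from $\nabla\cdot\mathbf{w}=0$) and with a sharp $\delta$-explicit bound $\|\nabla\Phi\|_{L^{2}}\leq C\|\mathbf{w}\cdot\nabla\eta\|_{L^{2}}$, then modify the test function to $\mathbf{w}\eta-\Phi$ so that the pressure drops out up to an additive constant. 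Combined with the anisotropic Poincar\'e inequality $\|\mathbf{w}\|_{L^{2}(\Omega_{t}(z'))}\leq C\delta\|\nabla\mathbf{w}\|_{L^{2}(\Omega_{t}(z'))}$, which holds because $\mathbf{w}$ vanishes on $\Gamma^{\pm}$, one obtains an iterative inequality of the hole-filling type
\begin{equation*}
F(s)\leq C\Big(\frac{\delta}{t-s}\Big)^{2}\bigl(F(t)-F(s)\bigr)+C\int_{\Omega_{t}(z')}\delta(x')^{2l+2}\,dx,\qquad \delta\leq s<t\leq R,
\end{equation*}
whose source integral is finite precisely when $l\geq -3/2$. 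Iterating over $O(\log(R/\delta))$ levels with geometric step size yields $F(\delta/2)\leq C\delta^{d-1+2(l+1)}$, and a Sobolev embedding after rescaling then produces $\|\nabla\mathbf{w}\|_{L^{\infty}(\Omega_{\delta/2}(z'))}\leq C\delta^{l+1}$, proving the first assertion.

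For the second stage, introduce the isotropic rescaling
\begin{equation*}
y:=(x-(z',0))/\delta,\qquad \hat{\mathbf{w}}(y):=\delta^{-(l+2)}\mathbf{w}(x),\qquad \hat{q}(y):=\delta^{-(l+1)}q(x),
\end{equation*}
which maps $\Omega_{\delta/2}(z')$ onto a domain $\hat\Omega$ of order-one size whose upper and lower graphs $\hat h_{i}(y_{1}):=\delta^{-1}h_{i}(z'_{1}+\delta y_{1})$ have $C^{m+1,\gamma}$-norms uniformly bounded in $\varepsilon$ by \eqref{h1h14}. A direct computation shows that $(\hat{\mathbf{w}},\hat q)$ satisfies the Stokes system $-\mu\Delta_{y}\hat{\mathbf{w}}+\nabla_{y}\hat{q}=\hat{\mathbf{f}}$, $\nabla_{y}\cdot\hat{\mathbf{w}}=0$ in $\hat\Omega$ with zero Dirichlet data on the rescaled top/bottom, and $\hat{\mathbf{f}}(y)=\delta^{-l}\mathbf{f}(x)$ verifies $\|\nabla_{y}^{k}\hat{\mathbf{f}}\|_{L^{\infty}(\hat\Omega)}\leq C$ for $0\leq k\leq m$. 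Stage~1 translates to $\|\nabla_{y}\hat{\mathbf{w}}\|_{L^{\infty}(\hat\Omega)}\leq C$. Classical $C^{m+1,\gamma}$ boundary Schauder estimates for the Stokes system on the fixed-geometry domain $\hat\Omega$ then deliver $\|\nabla_{y}^{m+1}\hat{\mathbf{w}}\|_{L^{\infty}}+\|\nabla_{y}^{m}\hat{q}\|_{L^{\infty}}\leq C$, and unrescaling with exponents $a=l+2$, $b=l+1$ yields the claimed $\delta^{l+1-m}$ bound.

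The principal technical obstacle is concentrated in Stage~1: the divergence-free constraint combined with the indeterminacy of $q$ up to a constant precludes a direct Caccioppoli-type argument, and one is forced to invert the divergence on a strip whose width tends to zero while tracking the sharp $\delta$-dependence of the Bogovskii constant. It is this dependence that selects the exponent $l+1$ and makes the source hypothesis $l\geq -3/2$ the natural borderline. Once the first-order bound is in hand, Stage~2 is essentially a bookkeeping scaling argument, relying only on the uniform $C^{m+1,\gamma}$ regularity of the renormalized boundaries furnished by \eqref{h1h14}.
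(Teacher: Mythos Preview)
Your two-stage plan---local energy iteration along the neck followed by rescaling and classical Stokes regularity on a unit-size domain---is exactly the paper's route: Stage~2 is the content of Proposition~\ref{lem3.1} (imported from \cite[Proposition~3.6]{LX1}), and Stage~1 is Lemma~\ref{lem3.2}, built on the global energy bound of Lemma~\ref{lemmaenergy} and the Caccioppoli inequality \eqref{iterating1} from \cite{LX1}; the Bogovski\u{\i} correction you propose is indeed one natural way to derive that Caccioppoli step.

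Two small corrections are worth flagging. First, the sharp local energy exponent is $d+2l+2$, not your $d+2l+1$: with your value the rescaled $H^1$ norm in Stage~2 would behave like $\delta^{-1/2}$ rather than $O(1)$, and the argument would not close. The paper's iteration uses \emph{arithmetic} increments $t_i=\delta+2c_0 i\delta$ up to scale $t_{k_0}\sim\sqrt{\delta}$, where the global bound of Lemma~\ref{lemmaenergy} absorbs the tail $(1/2)^{k_0}E(t_{k_0})$; your geometric-step scheme does not give a fixed contraction factor here, since the coefficient $(\delta/(t-s))^2$ is not scale-invariant. Second, the passage from the local $L^2$ bound to the pointwise gradient estimate is not a Sobolev embedding (which fails from $H^1$ in $d\ge 2$) but precisely the $m=0$ case of your own Stage~2; the paper packages both orders into the single black-box Proposition~\ref{lem3.1}. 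None of this changes the architecture, which is correct.
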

	
	From Proposition \ref{prop3.3}, we reduce the estimates of the higher derivatives of \({\bf w}\) and \(q\) to continuously improving the estimates of \({\bf f}\) and its derivatives. Thus, to obtain accurate estimates of the higher-order derivatives of \({\bf u}\) and \(p\), it suffices to construct appropriate auxiliary functions \({\bf v}\) and \(\bar{p}\) in the region \(\Omega_{2R}\), such that \({\bf f} = \mu \Delta {\bf v} - \nabla \bar{p}\) and its derivatives have suitable upper bounds. We will apply this approach to prove Theorems \ref{main thm1n} and \ref{main thm1}.
	
	To prove Proposition \ref{prop3.3}, we need the following proposition, which is from \cite[Proposition 3.6]{LX1}.
	\begin{prop}\label{lem3.1} (\cite{LX1})
		Let $({\bf w},q)$ be the solution to \eqref{w1}. Then for $z=(z',z_d)
		\in\Omega_R$,  the following estimates hold:
		\begin{equation*}
			\|\nabla{\bf w}\|_{L^{\infty}(\Omega_{\delta(z')/2}(z')) }\le C 
			\Big(\delta(z')^{-\frac{d}{2}}\|\nabla{\bf w}\|_{L^2(\Omega_{\delta(z')}(z'))} +
			\delta(z')\|{\bf f}\|_{L^{\infty}(\Omega_{\delta(z')}(z'))} \Big),
		\end{equation*}
		and for $m\ge1$,
		\begin{align*}
			&\|\nabla^{m+1}{\bf w}\|_{L^{\infty}(\Omega_{\delta(z')/2}(z'))}+\|
			\nabla^mq\|
			_{L^{\infty}(\Omega_{\delta(z')/2}(z'))} \nonumber\\
			\le&\, C{\delta(z')^{-m-1}}\Big(\delta(z')^{1-\frac{d}{2}}\|
			\nabla{\bf 
				w}\|_{L^2(\Omega_{\delta(z')}(z'))}+\sum_{j=0}^{m}\delta(z')^{2+j}\|
			\nabla^{j}
			{\bf f}\|_{L^{\infty}(\Omega_{\delta(z')}(z'))}\Big).
		\end{align*}
	\end{prop}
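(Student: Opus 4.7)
On the small cylinder $\Omega_{\delta(z')}(z')$, the thickness function $\delta$ is comparable to $\delta(z')$, so the hypothesis yields $\|\nabla^k{\bf f}\|_{L^\infty(\Omega_{\delta(z')}(z'))}\le C\delta(z')^{l-k}$ for all $0\le k\le m$. Plugged into Proposition \ref{lem3.1}, the source contribution $\delta(z')\|{\bf f}\|_{L^\infty}$ in the gradient estimate is exactly $C\delta(z')^{l+1}$, and in the higher-order estimate the term $\delta(z')^{-m-1}\sum_{j=0}^{m}\delta(z')^{2+j}\|\nabla^j{\bf f}\|_{L^\infty}$ collapses to $C\delta(z')^{l+1-m}$. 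Both conclusions of Proposition \ref{prop3.3} therefore follow once I establish the single $L^2$ energy bound
\begin{equation*}
\|\nabla{\bf w}\|_{L^2(\Omega_{\delta(z')}(z'))}\le C\delta(z')^{l+1+d/2},
\end{equation*}
because the factor $\delta(z')^{-d/2}$ (resp.\ $\delta(z')^{-m-1+1-d/2}$) in front of the $L^2$ term in Proposition \ref{lem3.1} absorbs cleanly into the claimed orders.

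\textbf{Step 2 (Caccioppoli-type inequality via a Bogovskii corrector).} For $\delta(z')\le s<t\le R$, let $\eta$ be a cutoff equal to $1$ on $\Omega_s$, supported in $\Omega_t$, with $|\nabla\eta|\le 2/(t-s)$. Since $\nabla\cdot({\bf w}\eta^2)=2\eta\,{\bf w}\cdot\nabla\eta\neq 0$, a naive test of \eqref{w1} against ${\bf w}\eta^2$ leaves an uncontrolled pressure integral. I would correct this by adding a Bogovskii field $\bm{\phi}$ solving $\nabla\cdot\bm{\phi}=2\eta\,{\bf w}\cdot\nabla\eta$ on the strip $\Omega_t\setminus\Omega_s$ with vanishing boundary trace; after rescaling the thin strip to unit aspect ratio and invoking Bogovskii's theorem, one gets
\begin{equation*}
\|\bm{\phi}\|_{L^2(\Omega_t\setminus\Omega_s)}+(t-s)\,\|\nabla\bm{\phi}\|_{L^2(\Omega_t\setminus\Omega_s)}\le C\,\|{\bf w}\|_{L^2(\Omega_t\setminus\Omega_s)}.
\end{equation*}
Testing \eqref{w1} against the divergence-free field ${\bf w}\eta^2-\bm{\phi}$ now annihilates the pressure. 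Combining Cauchy--Schwarz, the vertical-slice Poincar\'e inequality $\|{\bf w}(x_1,\cdot)\|_{L^2}\le C\delta(x_1)\,\|\partial_{x_d}{\bf w}(x_1,\cdot)\|_{L^2}$ (available because ${\bf w}=0$ on $\Gamma^\pm_{2R}$), the pointwise bound $\|{\bf f}\|_{L^\infty(\Omega_t)}\le C\delta_t^l$, and the volume estimate $|\Omega_t|\sim t\,\delta_t$, I would obtain for $F(t):=\int_{\Omega_t}|\nabla{\bf w}|^2\,dx$ a Caccioppoli-type inequality of the form
\begin{equation*}
F(s)\le\Bigl(\frac{C_1\delta_t}{t-s}\Bigr)^{2}\bigl(F(t)-F(s)\bigr)+C_2\,\delta_t^{2(l+1)+d},
\end{equation*}
where $\delta_t:=\varepsilon+\kappa t^2\sim\max_{|x_1|<t}\delta(x_1)$.

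\textbf{Step 3 (Iteration and main obstacle).} I would then iterate this inequality along a sequence $t_0=\delta(z')<t_1<\cdots<t_N\sim R$ with $t_{k+1}-t_k$ proportional to $\delta_{t_k}^{1/2}$; this is precisely the scale at which the multiplier is strictly less than $1$ with a quantitative gap while the additive term remains geometrically summable, exactly as in the iteration schemes developed in \cite{LX1, LX2} and adapted to the Lam\'e system in \cite{DLTZ}. Starting from $F(R)\le C$ (standard energy estimate, using that ${\bf w}$ has controlled trace on $\{|x_1|=R\}$ and ${\bf f}$ is bounded there), the iteration delivers $F(\delta(z'))\le C\delta(z')^{2(l+1)+d}$, which is the reduction in Step 1. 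The main new difficulty relative to the scalar conductivity and the Lam\'e cases is the pressure: the natural test function ${\bf w}\eta^2$ fails to be divergence-free, so $\bm{\phi}$ is indispensable, and its $L^2$ estimate must be tight both in the strip width $\delta_t$ (through Poincar\'e) and in the horizontal gap $t-s$ (through the $1/(t-s)$ factor on $\|\nabla\bm{\phi}\|_{L^2}$) for the Caccioppoli constant to carry exactly the power of $\delta_t$ required by the iteration. This is the precise place where the resonance with the incompressibility correction in convex integration, advertised in the introduction, is exploited; once it is in hand, the rest of the iteration machinery parallels the scalar and Lam\'e settings.
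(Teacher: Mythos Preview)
Your write-up does not prove the stated Proposition~\ref{lem3.1}; you invoke it as a black box in Step~1 and instead outline a proof of Proposition~\ref{prop3.3}. In this paper Proposition~\ref{lem3.1} carries no proof at all---it is quoted verbatim from \cite[Proposition~3.6]{LX1}---so there is nothing here to compare your argument against. For the record, the content of Proposition~\ref{lem3.1} is a rescaled boundary Schauder estimate for the Stokes system: one flattens $\Gamma^\pm$, dilates $\Omega_{\delta(z')}(z')$ to a unit-size domain, and applies classical $C^{m+1,\alpha}$ regularity for Stokes with homogeneous Dirichlet data; none of your three steps touches this.

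If your intended target was Proposition~\ref{prop3.3}, then your plan is essentially the paper's. The paper also reduces via Proposition~\ref{lem3.1} to the local energy bound $\int_{\Omega_{\delta(z')}(z')}|\nabla{\bf w}|^2\le C\delta(z')^{d+2+2l}$ (your Step~1), quotes the Caccioppoli inequality \eqref{iterating1} from \cite[Lemma~3.10]{LX1} (where the pressure is indeed handled by a Bogovski{\u\i}-type corrector, as you sketch in Step~2), and iterates in Lemma~\ref{lem3.2}. The only differences are cosmetic: the paper iterates with constant increments $t_{i+1}-t_i=2c_0\delta(z')$ for roughly $\delta(z')^{-1/2}$ steps and terminates by absorbing $E(t_{k_0})$ into the global energy bound of Lemma~\ref{lemmaenergy} (proved via Hardy's inequality, not merely a trace estimate at $|x_1|=R$ as you suggest), whereas you propose variable steps $\sim\delta_{t_k}^{1/2}$ out to scale $R$. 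Both schemes yield the same conclusion.
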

	
	From Proposition \ref{lem3.1}, the $W^{m+1,\infty}$ estimates of ${\bf 
		w}$ depends on the local energy estimates $\|\nabla {\bf w}\|_{L^{2}(\Omega_{\delta(z')}
		(z'))}$ and the pointwise estimates of $\nabla^{j}{\bf f}$, $j=0,1,\dots,m$, in 
	the narrow region. Indeed, we are able to demonstrate that the local energy estimates $\|
	\nabla {\bf w}\|_{L^{2}(\Omega_{\delta(z')}(z'))}$ are also closely related to the pointwise upper 
	bound of ${\bf f}$ itself in the narrow region. To this end, we first prove that the global 
	energy of ${\bf w}$ is bounded in the domain $\Omega_{\frac{3}{2}R}$, under certain 
	assumptions on ${\bf f}$.

	\begin{lemma}\label{lemmaenergy}
		Let $({\bf w},q)$ be the solution to \eqref{w1}. If 
		\begin{equation}\label{cond_f}
			|{\bf f}(x)|\leq C\delta(x')^{-3/2}\quad\mbox{in}~ \Omega_{2R},
			\quad\mbox{and} \quad \|q\|_{L^\infty(\Omega_{2R}\setminus\Omega_{R})},~\|{\bf w}\|
			_{L^\infty(\Omega_{2R}\setminus\Omega_{R})}\leq C,
		\end{equation}
		then
		\begin{equation*}
			\int_{\Omega_{\frac{3}{2}R}}|\nabla {\bf w}|^{2}\leq C.
		\end{equation*}
	\end{lemma}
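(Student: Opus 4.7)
My plan is an energy estimate obtained by testing \eqref{w1} against a cutoff times ${\bf w}$. Choose a smooth $\eta=\eta(x_1)$ with $\eta\equiv 1$ for $|x_1|\le 3R/2$, $\mathrm{supp}(\eta)\subset(-2R,2R)$, and $|\eta'|\le C/R$, regarded as a function on $\Omega_{2R}$ that is constant in $x_2$. Because ${\bf w}$ vanishes on $\Gamma^{\pm}_{2R}$ and $\eta$ vanishes near $\{|x_1|=2R\}$, the field $\eta^2{\bf w}$ is an admissible test function on $\Omega_{2R}$ with zero trace on $\partial\Omega_{2R}$. Multiplying $-\mu\Delta{\bf w}+\nabla q={\bf f}$ by $\eta^2{\bf w}$, integrating by parts, and using $\nabla\cdot{\bf w}=0$, so that $\nabla\cdot(\eta^2{\bf w})=2\eta\nabla\eta\cdot{\bf w}$, I obtain
\begin{align*}
\mu\int_{\Omega_{2R}}\eta^2|\nabla{\bf w}|^2\,dx =\;& -2\mu\int_{\Omega_{2R}}\eta\,(\nabla\eta\otimes{\bf w}):\nabla{\bf w}\,dx \\
& +2\int_{\Omega_{2R}}q\,\eta\,\nabla\eta\cdot{\bf w}\,dx+\int_{\Omega_{2R}}\eta^2\,{\bf f}\cdot{\bf w}\,dx.
\end{align*}

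The two cutoff terms on the right are supported in the strip $\{3R/2\le |x_1|\le 2R\}\subset \Omega_{2R}\setminus\Omega_R$, where by hypothesis $|{\bf w}|,|q|\le C$; Young's inequality then bounds the first by $\tfrac{\mu}{4}\int\eta^2|\nabla{\bf w}|^2+C$ and the second directly by $C$. The decisive term is $\int\eta^2\,{\bf f}\cdot{\bf w}$. A naive bound by Cauchy--Schwarz fails because $\|{\bf f}\|_{L^2(\Omega_{2R})}$ diverges like $\varepsilon^{-1/2}$. Instead I exploit that ${\bf w}=0$ on $\Gamma^{\pm}_{2R}$ to use a one-dimensional Poincar\'e inequality on each vertical slice of width $\delta(x_1)$:
\[
\|{\bf w}(x_1,\cdot)\|_{L^\infty(\text{slice})}\le \delta(x_1)^{1/2}\|\partial_{x_2}{\bf w}(x_1,\cdot)\|_{L^2(\text{slice})}.
\]
Combining this with $|{\bf f}|\le C\delta^{-3/2}$ and the slice length $\delta(x_1)$, the three factors of $\delta$ cancel exactly to give $\int_{\text{slice}}|{\bf f}||{\bf w}|\,dx_2\le C\|\partial_{x_2}{\bf w}(x_1,\cdot)\|_{L^2(\text{slice})}$. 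Integrating in $x_1$ over $(-2R,2R)$ and applying Cauchy--Schwarz yields
\[
\Big|\int_{\Omega_{2R}}\eta^2\,{\bf f}\cdot{\bf w}\,dx\Big|\le C\|\partial_{x_2}{\bf w}\|_{L^2(\Omega_{2R})}\le \tfrac{\mu}{8}\|\partial_{x_2}{\bf w}\|_{L^2(\Omega_{2R})}^2+C.
\]

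To close the estimate I must dominate $\|\partial_{x_2}{\bf w}\|_{L^2(\Omega_{2R})}^2$ by the cutoff energy plus a constant. On $\Omega_{3R/2}$ this is immediate since $\eta\equiv 1$. On the remaining annular region $\Omega_{2R}\setminus\Omega_{3R/2}$, $\delta$ is bounded below by a positive constant independent of $\varepsilon$, hence $|{\bf f}|\le C$; together with the hypothesized $L^\infty$ bounds on $({\bf w},q)$ and standard interior regularity for the Stokes system applied on a slightly thicker shell, this gives $\|\nabla{\bf w}\|_{L^2(\Omega_{2R}\setminus\Omega_{3R/2})}\le C$. Absorbing the two small coefficients into the left-hand side produces $\int_{\Omega_{3R/2}}|\nabla{\bf w}|^2\le C$, as claimed. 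The main obstacle is exactly the borderline singularity of ${\bf f}$: the rescue mechanism is the delicate cancellation $\delta^{-3/2}\cdot\delta\cdot\delta^{1/2}=\delta^{0}$ provided by the slice Poincar\'e inequality, which is available precisely because ${\bf w}$ carries zero Dirichlet data on both $\Gamma^{+}_{2R}$ and $\Gamma^{-}_{2R}$.
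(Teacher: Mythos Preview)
Your approach is essentially that of the paper: test \eqref{w1} against $\eta^2{\bf w}$, handle the two cutoff terms using the $L^\infty$ bounds on $\Omega_{2R}\setminus\Omega_R$, and control the critical term $\int \eta^2{\bf f}\cdot{\bf w}$ via a Poincar\'e/Hardy inequality on vertical slices, which is exactly what the zero Dirichlet data on $\Gamma^\pm_{2R}$ affords. The paper phrases the last step as H\"older plus Hardy's inequality, $\|\,{\bf w}/\delta\,\|_{L^2}\le C\|\nabla{\bf w}\|_{L^2}$, combined with $\|\delta{\bf f}\|_{L^2(\Omega_{3R/2})}\le C$, but this is the same mechanism as your slice $L^\infty$ bound.

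The one wrinkle is your closure step. By carrying the slice bound over all of $\Omega_{2R}$ you end up needing $\|\nabla{\bf w}\|_{L^2(\Omega_{2R}\setminus\Omega_{3R/2})}\le C$, which you justify by ``standard interior regularity on a slightly thicker shell''. This is shaky: the equation is only assumed to hold in $\Omega_{2R}$, so there is no room for interior estimates near $|x_1|=2R$. The simple remedy (which the paper uses) is to split $\int_{\Omega_{2R}}\eta^2{\bf f}\cdot{\bf w}$ into $\int_{\Omega_{3R/2}}{\bf f}\cdot{\bf w}$ plus the annular piece; the latter is $\le C$ directly since $|{\bf f}|,|{\bf w}|\le C$ there, and the former is bounded by $C\|\partial_{x_2}{\bf w}\|_{L^2(\Omega_{3R/2})}\le C\big(\int\eta^2|\nabla{\bf w}|^2\big)^{1/2}$ via your slice argument. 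This closes immediately without any appeal to regularity on the outer shell.
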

	
	\begin{proof}
		Let $\eta$ be a smooth function satisfying $\eta(x')=1$ if $|x'|<\frac{3}{2}R$, $
		\eta(x')=0$ if $|x'|>2R$, $0\leqslant\eta(x')\leqslant1$ if $\frac{3}{2}R\le |x'|\le 
		2R$, and $|\nabla_{x'}\eta(x')|\leq C$. Multiplying the first equation in \eqref{w1} 
		by ${\bf w}\eta^2$ and integrating by parts, we get
		\begin{align}\label{zy1}
			\mu\int_{\Omega_{2R}}\eta^2|\nabla{\bf w}|^2\leq \mu\int_{\Omega_{2R}}|
			\nabla_{x'}\eta||\eta\nabla{\bf w}||{\bf w}|+C\int_{\Omega_{2R}}\big|
			q\nabla\cdot({\bf w}\eta^2)\big|+\Big|\int_{\Omega_{2R}}{\bf f}\cdot{\bf w}
			\eta^2\Big|.
		\end{align}
		By virtue of the Cauchy inequality and \eqref{cond_f}, we have
		\begin{equation}\label{zy2}
			\int_{\Omega_{2R}}|\nabla_{x'}\eta||\eta\nabla{\bf w}||{\bf w}|\leq 
			\frac{1}{2}\int_{\Omega_{2R}}\eta^2|\nabla{\bf w}|^2+C\int_{\Omega_{2R}
				\setminus\Omega_{R}}|{\bf w}|^2\leq \frac{1}{2}\int_{\Omega_{2R}}\eta^2|
			\nabla{\bf w}|^2+C.
		\end{equation}
		For the second term on the right-hand side of \eqref{zy1}, recalling that $
		\nabla\cdot{\bf w}=0$ in $\Omega_{2R}$, and by \eqref{cond_f}, we derive
		\begin{equation}\label{zy3}
			\int_{\Omega_{2R}}\big|q\nabla\cdot({\bf w}\eta^2)\big|\leq 
			\int_{\Omega_{2R}\setminus \Omega_{R}}|q||{\bf w}|\leq C.
		\end{equation}
		It follows from \eqref{cond_f} that
		\begin{align*}
			\|\delta(x') {\bf f}\|_{L^2(\Omega_{\frac{3}{2}R})}\leq C\Big(\int_{\Omega_{3R/
					2}}\frac{1}{\delta(x')}\Big)^{1/2}\leq C.
		\end{align*}
		Thus, for the last term, by using \eqref{cond_f}, H{\"o}lder’s inequality,  
		and Hardy’s inequality, we derive
		\begin{align}\label{zy4}
			\Big|\int_{\Omega_{2R}} {\bf f}\cdot{\bf w}\eta^2\Big|\leq&\, \Big|
			\int_{\Omega_{\frac{3}{2}R}}{\bf w}\cdot {\bf f}\Big|+\Big|\int_{\Omega_{2R}
				\setminus\Omega_{\frac{3}{2}R}}\eta^2{\bf w}\cdot {\bf f}\Big|
			\leq\,\|\delta(x') {\bf f}\|_{L^2(\Omega_{\frac{3}{2}R})}\Big\|\frac{\bf w}
			{\delta(x')}\Big\|_{L^2(\Omega_{\frac{3}{2}R})}+C\nonumber\\
			\leq&\, C\Big(\int_{\Omega_{\frac{3}{2}R}}|\nabla{\bf w}|^2\Big)^{1/2}+C.
		\end{align}
		Substituting \eqref{zy2}--\eqref{zy4} into \eqref{zy1}, we obtain
		\begin{equation*}
			\int_{\Omega_{\frac{3}{2}R}}|\nabla {\bf w}|^2\leq\, C\Big(\int_{\Omega_{\frac{3}{2}R}}|
			\nabla{\bf w}|^2\Big)^{1/2}+C.
		\end{equation*}
		Hence, $\int_{\Omega_{\frac{3}{2}R}}|\nabla {\bf w}|^2\leq C$. We thus complete the 
		proof.
	\end{proof}
	
	By Lemma \ref{lemmaenergy} and utilizing the iteration technique, the following lemma illustrates the relationship between the local energy estimates \(\| \nabla {\bf w}\|_{L^{2}(\Omega_{\delta(z')})(z')}\) and the pointwise estimates of \({\bf f}\) in \(\Omega_{2R}\). The proof follows the spirit of \cite[Lemma 2.4]{LZZ}, and here we present only the key steps for the reader’s convenience.
	\begin{lemma}\label{lem3.2}
		Let ${\bf w}\in H^{1}(\Omega_{\frac{3}{2}R})$ be the solution to \eqref{w1}. If
		\begin{equation}\label{jstj}
			|{\bf f}(x)|\leq C\delta(x')^l, \quad\forall~l\ge -\frac{3}{2}, \quad 
			x\in \Omega_{2R},
		\end{equation}
		then
		\begin{equation*}
			\int_{\Omega_{\delta(z')}(z')}\left|\nabla{\bf w}\right|^2 \le 
			C\delta(z')^{d+2+2l},\quad z \in \Omega_{R}.
		\end{equation*}
	\end{lemma}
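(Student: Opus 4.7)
The plan is a Caccioppoli-based iteration that descends from the fixed-scale bound $\int_{\Omega_{3R/2}}|\nabla {\bf w}|^2\le C$ provided by Lemma \ref{lemmaenergy} down to the thin-gap scale $\Omega_{\delta(z')}(z')$. The mechanism that produces the sharp power $\delta(z')^{d+2+2l}$ is the \emph{narrow Poincar\'e inequality}: since ${\bf w}$ vanishes on both $\Gamma^{+}_{2R}$ and $\Gamma^{-}_{2R}$, at each horizontal location $x'$ the vertical extent $\delta(x')$ gives a pointwise control $|{\bf w}(x',\cdot)|\le \delta(x')\|\partial_{x_d}{\bf w}(x',\cdot)\|_{L^\infty_{x_d}}$, and hence $\|{\bf w}\|_{L^2(\Omega_t(z'))}\le C\delta_{\max}(t)\|\nabla {\bf w}\|_{L^2(\Omega_t(z'))}$, where $\delta_{\max}(t)\sim \max(\delta(z'),t^2)$ reflects the profile of $\delta(\cdot)$ on $\Omega_t(z')$.

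For fixed $z=(z',z_d)\in\Omega_R$, set $F(t):=\int_{\Omega_t(z')}|\nabla {\bf w}|^2$. For any $\delta(z')\le t<s\le R$ I would pick a horizontal cutoff $\eta=\eta(x')$ with $\eta\equiv 1$ on $|x'-z'|<t$, supported in $|x'-z'|<s$, and $|\nabla_{x'}\eta|\le C/(s-t)$. Testing the momentum equation in \eqref{w1} with ${\bf w}\eta^2$ and using $\nabla\cdot{\bf w}=0$, the pressure coupling reduces to $\int q\,{\bf w}\cdot\nabla\eta^2$, supported on the annular slab $\Omega_s(z')\setminus\Omega_t(z')$. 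This term is eliminated by testing instead with ${\bf w}\eta^2-\boldsymbol\phi$, where $\boldsymbol\phi$ is a Bogovski\u{\i}-type corrector on that slab satisfying $\nabla\cdot\boldsymbol\phi=2{\bf w}\eta\cdot\nabla\eta$ with $\boldsymbol\phi=0$ on $\partial(\Omega_s(z')\setminus\Omega_t(z'))$, so that the modified test function is divergence-free. Combining this with Young's inequality and the pointwise bound $|{\bf f}|\le C\delta(x')^l$ yields the Caccioppoli-type inequality
\begin{equation*}
F(t)\le \frac{C}{(s-t)^2}\int_{\Omega_s(z')\setminus\Omega_t(z')}|{\bf w}|^2 + C(s-t)^2\int_{\Omega_s(z')}|{\bf f}|^2.
\end{equation*}
Applying the narrow Poincar\'e inequality to the first term and the assumption on ${\bf f}$ to the second, this becomes the recursive estimate
\begin{equation*}
F(t)\le \frac{C\,\delta_{\max}(s)^2}{(s-t)^2}\,F(s) + C(s-t)^2\int_{-s}^{s}\delta(x_1)^{2l+1}\,dx_1.
\end{equation*}

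I would then iterate this recursion on a geometric sequence $t_k\searrow \delta(z')$ descending from $t_0=R/2$. In the regime $t_k\gg \sqrt{\delta(z')}$, where $\delta_{\max}(t_k)\sim t_k^2$, a careful choice of step sizes $s_k-t_k$ (analogous to \cite[Lemma 2.4]{LZZ}) makes the contraction factor $\delta_{\max}(s_k)^2/(s_k-t_k)^2$ either bounded or geometrically small, so the leading term is depleted over $O(|\log\delta(z')|)$ steps; in the terminal regime $t\sim\delta(z')$ the source contribution, of size $\sim s\cdot\delta(z')^{2l+2}\sim \delta(z')^{d+2+2l}$ (since $d=2$ here contributes one horizontal and the explicit $\delta_{\max}^2$ contributes the vertical volume factor), dominates and yields the claimed bound $F(\delta(z'))\le C\delta(z')^{d+2+2l}$.

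The main obstacle I expect is handling the pressure term uniformly in $\varepsilon$. The natural test function ${\bf w}\eta^2$ is not divergence-free, so $q$ survives through $\int q\,{\bf w}\cdot\nabla\eta^2$, and no a priori $L^2$ bound on $q$ is available. The Bogovski\u{\i} corrector on the annular slab $\Omega_s(z')\setminus\Omega_t(z')$ must be constructed with an operator norm independent of $\varepsilon$, which one secures by a rescaling in the $x_d$-direction that normalizes the slab to a domain of bounded aspect ratio, where the classical Bogovski\u{\i} estimate applies with a universal constant. Once this technical step is in place, tracking the exponents through the iteration is a routine but bookkeeping-heavy exercise.
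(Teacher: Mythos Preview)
Your strategy—a Caccioppoli-type inequality iterated against the global bound of Lemma~\ref{lemmaenergy}—is the same as the paper's. The paper's execution is leaner: it quotes the needed Caccioppoli inequality \eqref{iterating1} directly from \cite[Lemma~3.10]{LX1} (bypassing your Bogovski\u{\i} construction and the attendant compatibility issue on the disconnected annular slab in $d=2$), and it iterates with \emph{arithmetic} step $2c_0\delta(z')$ only over the single regime $\delta(z')\le t_i\lesssim\sqrt{\delta(z')}$, where $\delta_{\max}\approx\delta(z')$ throughout and the contraction factor is a fixed $\tfrac12$; at the outer end $t_{k_0}\sim\sqrt{\delta(z')}$ the global energy is invoked and annihilated by the factor $(\tfrac12)^{k_0}$ with $k_0\sim\delta(z')^{-1/2}$, so your two-regime descent from $R/2$ is not needed.
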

	\begin{proof}
		For $|z'|\le R$ and $0<t<s\leq R$, we have the following Caccioppoli-type inequality, which can be found in \cite[Lemma 3.10]{LX1}:
		\begin{equation}\label{iterating1}
			\int_{\Omega_{t}(z')}|\nabla {\bf w}|^{2}\leq
			\Big(\frac{1}{4}+\frac{C\delta(z')^2}{(s-t)^{2}}\Big)\int_{\Omega_{s}(z')}|\nabla {\bf w}|^2+C\big((s-t)^{2}+\delta(z')^2\big)\int_{\Omega_{s}(z')}|{\bf f}|^{2}.
		\end{equation}
		By using \eqref{jstj}, 
		\begin{align}\label{jbnl1}
			\int_{\Omega_s(z')}|{\bf f}|^2\leq Cs^{d-1}\delta(z')^{2l+1},~0<s\le C\sqrt{\delta(z')}.
		\end{align}
		Denote $E(t):=\int_{\Omega_{t}(z')}\left|\nabla{\bf w}\right|^2$ and substituting \eqref{jbnl1} into \eqref{iterating1} yields
		
		\begin{equation}\label{itera1}
			E(t)\le\Big(\frac{1}{4}+\Big(\frac{c_0\delta(z')}{s-t}\Big)^2\Big)E(s)+C\Big((s-t)^2+\delta(z')^2\Big)s^{d-1}\delta(z')^{2l+1},
		\end{equation}
		where $c_{0}$ is a constant and we fix it now. Let $k_{0}=\left[\frac{1}{8c_{0}\sqrt{\delta(z')}}\right]$ and $t_{i}=\delta(z')+2c_{0}i\delta(z'), i=0,1,2,\dots,k_{0}$. Then taking $s=t_{i+1}$ and $t=t_i$ in \eqref{itera1}, we have
		\begin{align*}
			E(t_{i})\leq \frac{1}{2}E(t_{i+1})+C(i+1)^{d-1}\delta(z')^{2l+d+2}.
		\end{align*}
		After $k_{0}$ iterations, and using Lemma \ref{lemmaenergy}, we obtain
		\begin{align*}
			E(t_0)\leq&\,\Big(\frac{1}{2}\Big)^{k_0}E(t_{k_{0}})+C\delta(z')^{d+2+2l}\sum_{i=0}^{k_0-1}\Big(\frac{1}{2}\Big)^i(i+1)^{d-1}\\
			\leq&\,\Big(\frac{1}{2}\Big)^{k_0}\int_{\Omega_{\frac{3}{2}R}}\left|\nabla{\bf w}\right|^2+C\delta(z')^{d+2+2l} \le C\delta(z')^{d+2+2l},
		\end{align*}
		for sufficiently small $\epsilon$. This completes the proof.
	\end{proof}
	
	\begin{proof}[Proof of Proposition \ref{prop3.3}]
		Combining Lemma \ref{lem3.2} and Proposition \ref{lem3.1}, we finish the proof of Proposition \ref{prop3.3}.
	\end{proof}
	
	Denote
	$${\boldsymbol\psi}_{1}=\begin{pmatrix}
		1 \\
		0
	\end{pmatrix},\quad
	{\boldsymbol\psi}_{2}=\begin{pmatrix}
		0\\
		1
	\end{pmatrix},\quad
	{\boldsymbol\psi}_{3}=\begin{pmatrix}
		x_{2}\\
		-x_{1}
	\end{pmatrix}.$$ 
	Let $({\bf u}_{1}^\alpha,p_1^\alpha)$, $\alpha=1,2,3$, be the 
	solution to the Dirichlet problem of the Stokes system
	\begin{equation}\label{u,peq1}
		\begin{cases}
			\mu\Delta{\bf u}_1^\alpha=\nabla p_1^\alpha,\quad\nabla\cdot{\bf 
				u}_1^\alpha=0&\mathrm{in}~\Omega,\\
			{\bf 
				u}_{1}^\alpha=\boldsymbol{\psi}_{\alpha}&\mathrm{on}~\partial{D}_{1},\\
			{\bf u}_{1}^\alpha=0&\mathrm{on}~\partial{D_{2}}\cup\partial{D}.
		\end{cases}
	\end{equation}
	In the next three sections, we consider \eqref{u,peq1} with $\alpha=1,2,3$, and establish the higher derivative estimates of the solutions by applying Proposition \ref{prop3.3}, respectively. 
	
	\section{Estimates for $({\bf u}_{1}^{1},p_{1}^{1})$}\label{sec_u1}
	In this section, we use Proposition \ref{prop3.3} to establish high-order derivative estimates for the solution to \eqref{u,peq1} with prescribed boundary data \(\boldsymbol{\psi}_{1}\) on \(\partial D_{1}\). For any \(m \geq 1\), we construct a sequence of auxiliary functions \({\bf v}_1^l\) and corresponding \(\bar{p}_l\) in \(\Omega_{2R}\), such that the quantity \(\left|\sum_{l=1}^{m+1}(\mu\Delta {\bf v}_1^l - \nabla \bar{p}_l)(x)\right|\) is pointwise bounded in \(\Omega_{2R}\) and satisfies the conditions of Proposition \ref{prop3.3}. We then show that \(\nabla^{m+1} \sum_{l=1}^{m+1} {\bf v}_1^l\) and \(\nabla^m \sum_{l=1}^{m+1} \bar{p}_l\) effectively capture all singularities of \(\nabla^{m+1} {\bf u}_1^1\) and \(\nabla^m p_1^1\), up to \(O(1)\) accuracy.
	
	\subsection{For general $h_{1}(x_{1})$ and $h_{2}(x_{1})$}
	We introduce the following modified Keller-type 
	function $k(x)\in C^{m+1}(\mathbb{R}^2)$:
	\begin{equation}\label{def_k}
		k(x)=\frac{x_2-\frac12(h_1-h_2)(x_{1})}{\delta(x_1)}\quad\mbox{in}~\Omega_{2R},
	\end{equation}
	which satisfies $k(x)=\frac{1}{2}$ on $\Gamma_{2R}^+$ and $k(x)=-\frac{1}{2}$ on $\Gamma_{2R}^-$, and so
	\begin{equation}\label{kbc}
		k(x)+\frac12=1\,\text{on}\,~\Gamma_{2R}^+,\quad k(x)+\frac12=0\,~\text{on}\,
		\Gamma_{2R}^-, \quad \text{and}~~k(x)^2-\frac14=0\,~\text{on}\,\Gamma_{2R}^+\cup\Gamma_{2R}^-.
	\end{equation}
	A direct calculation shows that
	\begin{equation}\label{kdds}
		\partial_{x_{1}}k(x)=-\frac{(h'_1-h'_2)(x_1)}{2\delta(x_1)}-\frac{(h'_1+h'_2)(x_1)}{\delta(x_1)}k(x),\quad \partial_{x_{2}}k(x)=\frac{1}{\delta(x_1)}\quad\mbox{in}~\Omega_{2R}.
	\end{equation}
	
	Note that due to the asymmetry between \(h_1\) and \(h_2\), the difference term \(h_1 - h_2\) introduces significant challenges in the calculations and leads to an increase in the singularities of the higher-order derivatives.
	
	\begin{prop}\label{prop2.1}
		Under the same assumption as in Theorem \ref{main thm1n}, let ${\bf u}_{1}^{1}$ 
		and $p_{1}^{1}$ be the solution to \eqref{u,peq1} with $\alpha=1$. Then for 
		sufficiently small $0<\varepsilon<1/2$, we have
		\begin{equation}\label{mainestv111}
			\mbox{(i) }~ |\nabla{\bf u}_{1}^{1}(x)|\le C(\varepsilon+|x_1|^2)^{-1},\quad |p_1^1(x)-
			p_1^1(z_1,0)|\leq C(\varepsilon+|x_1|^2)^{-\frac32}\quad \text{for}~x\in\Omega_{R}
		\end{equation}
		for some point $z=(z_1,0)\in \Omega_{R}$ with $|z_1|=R/2$; and
		
		(ii) for any $m\ge 1$, 
		\begin{equation}\label{mainestv112}
			|\nabla^{m+1}{\bf u}_{1}^{1}(x)|+|\nabla^{m}{p}_{1}^{1}(x)| \le 
			C(\varepsilon+|x_1|^2)^{-\frac{m+3}{2}}\quad \text{for}~x\in\Omega_{R}.
		\end{equation}
	\end{prop}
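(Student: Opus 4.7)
My plan is to apply Proposition~\ref{prop3.3} after constructing an auxiliary pair $({\bf v},\bar p)=\sum_{l=1}^{m+1}({\bf v}_1^l,\bar p_l)$ on $\Omega_{2R}$ with the properties that (a) ${\bf v}$ reproduces the Dirichlet trace of ${\bf u}_1^1$ on $\Gamma^+_{2R}\cup\Gamma^-_{2R}$, (b) $\nabla\cdot{\bf v}=0$ in $\Omega_{2R}$, and (c) the residual ${\bf f}:=\mu\Delta{\bf v}-\nabla\bar p$ together with its derivatives satisfies a pointwise bound $|\nabla^k{\bf f}(x)|\leq C\delta(x_1)^{l_m-k}$ for $0\leq k\leq m$, with $l_m$ large enough that Proposition~\ref{prop3.3} produces an error on $({\bf u}_1^1-{\bf v},\,p_1^1-\bar p)$ of order at most $\delta(x_1)^{-(m+3)/2}$. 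Once that is done, the estimates \eqref{mainestv111}--\eqref{mainestv112} follow by adding the explicit singular sizes of $\nabla^{m+1}{\bf v}$ and $\nabla^m\bar p$ (each of order $\delta(x_1)^{-(m+3)/2}$ at top degree) to the difference bound coming from Proposition~\ref{prop3.3}.

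For the leading-order step I would use the modified Keller profile $k(x)$ of \eqref{def_k}. The natural first component of ${\bf v}_1^1$ is $k(x)+1/2$, which by \eqref{kbc} equals $1$ on $\Gamma^+_{2R}$ and $0$ on $\Gamma^-_{2R}$; the second component is chosen as an $x_2$-antiderivative of $-\partial_{x_1}(k+1/2)$, modified by a polynomial-in-$k$ multiple of $k^2-1/4$ so that $\nabla\cdot{\bf v}_1^1=0$ and the second component vanishes on $\Gamma^{\pm}_{2R}$. The leading pressure takes the lubrication-type form $\bar p_1(x)=A(x_1)\delta(x_1)^{-2}+B(x_1)\delta(x_1)^{-1}$, with $A,B$ built from $h_1',h_2'$ and their derivatives and selected so that the principal singularities of $\mu\Delta{\bf v}_1^1-\nabla\bar p_1$ cancel, leaving a residual of order $\delta(x_1)^{-1/2}$. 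This already gives part (i): Proposition~\ref{lem3.1} together with Lemma~\ref{lem3.2} produce $|\nabla({\bf u}_1^1-{\bf v}_1^1)|+|p_1^1-\bar p_1-c|\leq C$ on each $\Omega_{\delta(z_1)/2}(z_1)\subset\Omega_R$.

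For the induction driving part (ii), I assume the partial sum through index $l$ has residual $|\nabla^k{\bf f}^{(l)}|\leq C\delta(x_1)^{l-\frac{3}{2}-k}$ for $0\leq k\leq m$ and make the ansatz
\begin{equation*}
{\bf v}_1^{l+1}(x)=\bigl(F_l(x_1)\Phi_1^{(l)}(k(x)),\;G_l(x_1)\Phi_2^{(l)}(k(x))\bigr)^T,\qquad \bar p_{l+1}(x)=\sum_i a_i^{(l)}(x_1)\,\delta(x_1)^{-i}\,\Psi_i^{(l)}(k(x)),
\end{equation*}
where $\Phi_1^{(l)},\Phi_2^{(l)}$ are polynomials vanishing at $k=\pm 1/2$ so that ${\bf v}_1^{l+1}=0$ on $\Gamma^{\pm}_{2R}$, the pair $(F_l,G_l)$ is coupled to enforce $\nabla\cdot{\bf v}_1^{l+1}=0$, and the scalar coefficients $a_i^{(l)}$ together with the leading parts of $F_l,G_l$ are determined by matching the dominant $k$-modes of ${\bf f}^{(l)}$, upgrading the residual by one power of $\delta$. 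Iterating through $l=1,\dots,m$ pushes $l_m$ high enough for Proposition~\ref{prop3.3} to furnish the bound on the difference, and combining with the explicit sizes of $\nabla^{m+1}\sum_l {\bf v}_1^l$ and $\nabla^m\sum_l \bar p_l$ closes \eqref{mainestv112}.

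The hardest point will be the construction of ${\bf v}_1^{l+1}$ and $\bar p_{l+1}$ at each iteration in the asymmetric setting $h_1\neq h_2$. The term $-(h_1'-h_2')/(2\delta)$ appearing in $\partial_{x_1}k$ via \eqref{kdds} couples the $k$-even and $k$-odd modes at every step, so the polynomial bases $\{\Phi_i^{(l)}\},\{\Psi_i^{(l)}\}$ must contain both parities and the linear system determining $(F_l,G_l,\{a_i^{(l)}\})$ grows accordingly. Proving its solvability at each level, verifying that the new coefficients inherit enough smoothness in $x_1$ to control $\nabla^k{\bf f}$ up to $k=m$, and checking that no spurious singularities worse than $\delta^{-(m+3)/2}$ are introduced when the auxiliary sum is reassembled, constitute the technical core of the proof.
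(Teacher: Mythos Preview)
Your proposal follows essentially the same route as the paper: build an auxiliary pair $({\bf v},\bar p)=\sum_{l}({\bf v}_1^l,\bar p_l)$ on $\Omega_{2R}$ with the correct Dirichlet trace, divergence-free velocity, and iteratively improved residual ${\bf f}=\mu\Delta{\bf v}-\nabla\bar p$, then feed the residual bounds into Proposition~\ref{prop3.3}. You also correctly identify the asymmetric coupling through $(h_1'-h_2')/(2\delta)$ in \eqref{kdds} as the technical crux.

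Two points where your sketch deviates from what actually works deserve attention. First, the residual after the leading step is of order $\delta^{-1}$, not $\delta^{-1/2}$: the paper's first pair $({\bf v}_1^1,\bar p_1)$ (see \eqref{v11def}--\eqref{barp1}) achieves $|{\bf f}_1^1|\le C\delta^{-1}$, and each subsequent corrector raises this by one full power of $\delta$, so that $|{\bf f}_1^{m+1}|\le C\delta^{m-1}$. This is exactly what Proposition~\ref{prop3.3} needs (with $l=m-1$) to make the difference $\nabla^{m+1}({\bf u}_1^1-{\bf v})$ and $\nabla^m(p_1^1-\bar p)$ bounded. Second, and more substantively, your ansatz ${\bf v}_1^{l+1}=(F_l(x_1)\Phi_1^{(l)}(k),\,G_l(x_1)\Phi_2^{(l)}(k))^T$ with a \emph{single} scalar coefficient per component is too rigid: the residual $({\bf f}_1^{l})^{(1)}=\sum_{i=0}^{2l}S_{il}(x_1)x_2^i$ carries $2l{+}1$ independent $x_1$-coefficients, so cancelling it via $\mu\partial_{x_2x_2}$ requires an equal number of free functions. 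The paper therefore takes $({\bf v}_1^{l})^{(1)}=\bigl(\sum_i F_{1l}^i(x_1)x_2^i+\tilde F_{1l}(x_1)\bigr)(k^2-\tfrac14)$ (see \eqref{v1ldef}) with the $F_{1l}^i$ determined recursively by \eqref{dy1}, then builds the second component and an extra pair $(\tilde F_{1l},\tilde F_{2l})$ specifically to restore $\nabla\cdot{\bf v}_1^l=0$ after the fact (equations \eqref{zj1}--\eqref{dy3}). Your remark that ``the linear system\dots grows accordingly'' is in the right spirit, but the concrete form of the corrector should reflect this enlarged family of coefficient functions from the outset; otherwise the divergence-free constraint and the residual cancellation cannot be enforced simultaneously.
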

	
	\begin{proof}[Proof of Proposition \ref{prop2.1}]
		Without loss of generality, we may assume that $x_1\ge 0$.
		We prove Proposition \ref{prop2.1} by induction. To do so, we construct a series of auxiliary function pairs \(({\bf v}_{1}^{l}(x), \bar{p}_{l}(x))\) to minimize \(\Big|\sum_{l=1}^{m+1}(\mu\Delta{\bf v}_{1}^{l} - \nabla\bar{p}_{l})(x)\Big|\). Then, we show that \(\nabla^{m+1}\sum_{l=1}^{m+1}{\bf v}_1^l\) and \(\nabla^m\sum_{l=1}^{m+1}{\bar{p}}_{l}\) effectively capture all singular terms in \(\nabla^{m+1}{\bf u}_{1}^{1}\) and \(\nabla^{m}{p}_{1}^1\), respectively.
		
		{\bf Step I. Gradient estimates.} To estimate $|\nabla{\bf u}_{1}^{1}|$, we first construct an auxiliary function 
		$${\bf v}_{1}^{1}(x)=\Big(({\bf v}_{1}^{1})^{(1)}(x),({\bf v}_{1}^{1})^{(2)}(x)\Big)^{T},$$
		which satisfies the same boundary condition as ${\bf u}_{1}^{1}$, that is, ${\bf v}_{1}^{1}(x)={\bf u}_{1}^{1}
		(x)={\boldsymbol{\psi}}_{1}$ on $\Gamma^{+}_{2R}$, ${\bf v}_{1}^{1}(x)={\bf 
			u}_{1}^{1}(x)=0$ on $\Gamma^{-}_{2R}$. We define the difference between the pairs $({\bf u}_{1}^{1}, p_{1})$ and $({\bf v}_{1}^{1},\bar{p}_{1})$ as
		\begin{equation*}
			{\bf w}:={\bf u}_{1}^{1}-{\bf v}_{1}^{1}\quad\mbox{and}\quad 
			q:=p_{1}^{1}-\bar{p}_{1}.
		\end{equation*}
		Then $({\bf w},q)$ satisfies the  boundary value problem in the narrow region
		\begin{equation}\label{w11}
			\begin{cases}
				-\mu\Delta{\bf w}+\nabla q={\bf f}_{1}^{1}\quad&\mathrm{in}\ \Omega_{2R},\\
				\nabla\cdot {\bf w}=0\quad&\mathrm{in}\ \Omega_{2R},\\
				{\bf w}=0\quad&\mathrm{on}\ \Gamma^+_{2R}\cup\Gamma^-_{2R},
			\end{cases}
		\end{equation}
		where
		\begin{equation}\label{f11_def}
			{\bf f}_{1}^{1}:=\mu\Delta{\bf v}_{1}^{1}-\nabla\bar{p}_{1}.
		\end{equation}
		
		{\bf Step I.1. Construction of ${\bf v}_{1}^{1}$ and its estimates.} By using the property of $k(x)$ described in \eqref{kbc}, we find an auxiliary function ${\bf v}_{1}^{1}$ in the following form:
		\begin{equation}\label{v11def}
			{\bf v}_{1}^{1}(x)=
			\begin{pmatrix}
				k(x)+\frac{1}{2}\\0
			\end{pmatrix}+
			\begin{pmatrix}
				F(x)\\
				G(x)
			\end{pmatrix}\Big(k(x)^2-\frac{1}{4}\Big)\quad\text{in}~\Omega_{2R},
		\end{equation}
		where $F(x)$ and $G(x)$ are determined in the following way. 
		
		To begin with, through a direct calculation, we have
		\begin{equation}\label{bds1}
			\begin{split}
				&\partial_{x_{1}}({\bf v}_{1}^{1})^{(1)}=\partial_{x_{1}} F(x)\Big(k(x)^2-\frac{1}{4}\Big)+2F(x)k(x)\partial_{x_{1}} k(x)+\partial_{x_{1}} k(x),\\
				&\partial_{x_{2}}({\bf v}_{1}^{1})^{(2)}=\partial_{x_{2}} G(x)\Big(k(x)^2-
				\frac{1}{4}\Big)+\frac{2k(x)}{\delta(x_1)}G(x).
			\end{split}
		\end{equation}
		In order to have $\partial_{x_{1}}({\bf v}_{1}^{1})^{(1)}+\partial_{x_{2}}({\bf v}_{1}^{1})^{(2)}=0$ in $\Omega_{2R}$, we set 
		\begin{equation}\label{ffz1}
			\frac14(\partial_{x_{1}}F+\partial_{x_{2}}G)(x)=-\frac{(h'_1-h'_2)(x_1)}{2\delta(x_1)},
		\end{equation}
		and then \eqref{bds1} gives 
		$$-\frac{2(h'_1-h'_2)(x_1)}{\delta(x_1)}\Big(k(x)^2-\frac{1}{4}\Big)+2F(x)k(x)\partial_{x_{1}} k(x)+\partial_{x_{1}} k(x)+\frac{2k(x)}{\delta(x_1)}G(x)=0.$$
		By using \eqref{kdds}, we have
		\begin{equation}\label{ffz2}
			G(x)=(h'_1-h'_2)(x_{1})k(x)+\frac12(h'_1+h'_2)(x_1)-F(x)
			\delta(x_1)\partial_{x_1}k(x).
		\end{equation}
		Substituting this into \eqref{ffz1} and using \eqref{kdds} again, we see that $F(x)$ satisfies the following first-order partial differential equation:
		\begin{equation*}
			\partial_{x_{1}}F(x)+\frac{(h'_1+h'_2)(x_1)}{\delta(x_1)}F(x)-
			\delta(x_1)
			\partial_{x_1}k(x)\partial_{x_{2}}F(x)=-\frac{3(h'_1-h'_2)(x_1)}
			{\delta(x_1)}.
		\end{equation*}
		In particular, if we assume $F(x)=F(x_{1})$, then
		\begin{equation*}
			F'(x_{1})+\frac{(h'_1+h'_2)(x_1)}{\delta(x_1)}F(x_{1})=-\frac{3(h'_1-h'_2)(x_1)}
			{\delta(x_1)}.
		\end{equation*}
		Then,
		\begin{equation}\label{deff}
			F(x_{1})=-\frac{3(h_1-h_2)(x_{1})}{\delta(x_1)}
		\end{equation}
		is a particular solution. Substituting this $F$ into \eqref{ffz2} yields
		\begin{equation}\label{defg}
			G(x)=(h'_1-h'_2)(x_{1})k(x)+\frac12(h'_1+h'_2)(x_1)+3(h_1-h_2)(x_{1})\partial_{x_1}k(x).
		\end{equation}
		Therefore, $F$ and $G$ are determined, which satisfies $\nabla\cdot{\bf v}_{1}^{1}(x)=0$ in $\Omega_{2R}$. 
		
		Using \eqref{h1h14}, \eqref{v11def}, \eqref{deff}, and \eqref{defg}, a direct calculation shows that in $\Omega_{2R}$,
		\begin{equation}\label{v11esti}
			\begin{split}
				&|\partial_{x_1}({\bf v}_{1}^{1})^{(1)}|\le 
				C\delta(x_1)^{-1/2},\quad|
				\partial_{x_2}({\bf v}_{1}^{1})^{(1)}|\leq C\delta(x_1)^{-1}, \\
				&|\partial_{x_1}({\bf v}_{1}^{1})^{(2)}|\le C, \quad\quad\quad\,\,\,
				\quad|\partial_{x_2}({\bf v}_{1}^{1})^{(2)}|\le C\delta(x_1)^{-1/2}.
			\end{split}
		\end{equation}
		For high-order derivatives, for $k\ge1,$ in 
		$\Omega_{2R},$
		\begin{align}\label{v11highesti}
			\begin{split}
				&|\partial_{x_1}^{k}\partial_{x_2}^{s}({\bf v}_{1}^{1})^{(1)}|\le 
				C\delta(x_1)^{-\frac{k}{2}-s}\quad\,\,\,\text{for}~s\le2,
				\quad\partial_{x_2}^{s}({\bf v}_{1}^{1})^{(1)}=0\quad\text{for}~s\ge3,\\
				&|\partial_{x_1}^{k}\partial_{x_2}^{s}({\bf v}_{1}^{1})^{(2)}|\le 
				C\delta(x_1)^{\frac{1}{2}-\frac{k}{2}-s}\quad\text{for}~s\le3,
				\quad\partial_{x_2}^{s}({\bf v}_{1}^{1})^{(2)}=0\quad\text{for}~s\ge4.
			\end{split}	
		\end{align}
		From \eqref{v11highesti}, we have 
		\begin{equation}\label{Delta v}
			|\mu\Delta{\bf v}_{1}^{1}|\le C|\Delta({\bf v}_{1}^{1})^{(1)}|+C|\Delta({\bf v}_{1}^{1})^{(2)}|\le C\delta(x_1)^{-2}.
		\end{equation}
		We observe that there are two main terms in \(\Delta{\bf v}_{1}^{1}\): \(\partial_{x_2x_2}({\bf v}_{1}^{1})^{(1)}\) and \(\partial_{x_2x_2}({\bf v}_{1}^{1})^{(2)}\), which are of orders \(\delta(x_1)^{-2}\) and \(\delta(x_1)^{-3/2}\), respectively. Next, we seek a suitable \(\bar{p}_{1}\) such that \(|\mu\Delta{\bf v}_{1}^{1} - \nabla\bar{p}_{1}|\) becomes as small as possible.
		
		{\bf Step I.2. Construction of $\bar{p}_{1}$ and its estimates.} 
		We choose
		\begin{equation}\label{barp1}
			\bar{p}_{1}= 6\mu\int_{x_1}^{R}\frac{(h_1-h_2)(y)}{\delta(y)^3}\ dy   
			+\mu\partial_{x_2}({\bf v}_{1}^{1})^{(2)} \quad\text{in}~\Omega_{2R}.
		\end{equation}
		This choice allows us to eliminate the dominant terms. Specifically, by \eqref{v11def} and \eqref{barp1},
		\begin{equation}\label{cancel1}
			\mu\partial_{x_2x_2}({\bf v}_{1}^{1})^{(2)}
			-\partial_{x_2}\bar{p}_{1}=0,\quad \mu\partial_{x_2x_2}({\bf v}_{1}^{1})^{(1)}-\partial_{x_1}\bar{p}_{1}=-\mu\partial_{x_1x_2}({\bf v}_{1}^{1})^{(2)}.
		\end{equation}
		
		From \eqref{barp1}, a straightforward calculation shows that
		\begin{equation}\label{barpesti}
			|\bar{p}_{1}|\le C\delta(x_1)^{-3/2},\quad|\partial_{x_1}\bar{p}
			_{1}|\le C\delta(x_1)^{-2},\quad|\partial_{x_2}\bar{p}_{1}|\le 
			C\delta(x_1)^{-3/2}\quad\text{in}~\Omega_{2R}.
		\end{equation}
		Using \eqref{v11esti} and \eqref{barpesti}, we deduce 
		\begin{equation}\label{gradmv1p1x}
			|\nabla{\bf v}_{1}^{1}|\le C\delta(x_1)^{-1},\quad |\nabla\bar{p}_{1}|\le C\delta(x_1)^{-2}\quad\text{in}~\Omega_{2R}.
		\end{equation}

		{\bf Step I.3. Estimates of ${\bf f}_{1}^{1}$.} By virtue of \eqref{f11_def} and \eqref{cancel1}, we have
		\begin{equation*}
			({\bf f}_{1}^{1})^{(1)}=\mu \partial_{x_1}\big(\partial_{x_{1}}({\bf v}_{1}^{1})^{(1)}-\partial_{x_{2}}({\bf v}_{1}^{1})^{(2)}\big), \quad 	({\bf f}_{1}^{1})^{(2)}=\mu\partial_{x_1x_1}({\bf v}_{1}^{1})^{(2)}\quad \text{in}~\Omega_{2R}.
		\end{equation*}
		From \eqref{v11def}, \eqref{deff}, \eqref{defg}, we notice that $({\bf v}_1^1)^{(1)}$ is a quadratic polynomial in $x_2$, while $({\bf v}_1^1)^{(2)}$ is a cubic polynomial in $x_2$. Therefore, we can represent ${\bf f}_{1}^{1}$ as polynomials in $x_{2}$:
		\begin{align}\label{f11form}
			\begin{split}
				&({\bf f}_{1}^{1})^{(1)}=S_{21}(x_1)x_{2}^{2}+S_{11}(x_1)x_{2}
				+S_{01}(x_1),\\
				&({\bf f}_{1}^{1})^{(2)}=G_{31}(x_1)x_{2}^{3}+G_{21}(x_1)x_{2}^{2}
				+G_{11}(x_1)x_{2}+G_{01}(x_1).
			\end{split}
		\end{align}
		By a direct calculation and using  \eqref{v11highesti}, for any $s\ge 0$, $i=0,1,2$ and $j=0,1,2,3,$ 
		\begin{equation}\label{SGesti}
			|S_{i1}^{(s)}|\le C\delta(x_1)^{-i-\frac{s+2}{2}},\quad|G_{j1}^{(s)}|\le 
			C\delta(x_1)^{-j-\frac{s+1}{2}}.
		\end{equation}
		Then, it is immediate from \eqref{f11form} and  \eqref{SGesti} that
		\begin{equation*}
			|({\bf f}_{1}^{1})^{(1)}|\le C\delta(x_1)^{-1}, \quad|({\bf f}_{1}
			^{1})^{(2)}|\le C\delta(x_1)^{-1/2}.
		\end{equation*}
		Hence, 
		\begin{equation*}
			|{\bf f}_{1}^{1}|\le C (|({\bf f}_{1}^{1})^{(1)}|+|({\bf 
				f}_{1}^{1})^{(2)}|) \le 
			C \delta(x_1)^{-1}\quad \text{in}~\Omega_{2R}.
		\end{equation*}
		This upper bound is much better than the right-hand side, $C\delta(x_1)^{-2}$, of \eqref{Delta v}.
		
		We now employ Proposition \ref{prop3.3} to obtain 
		\begin{align*}
			\|\nabla{\bf u}_{1}^{1}-\nabla{\bf v}_{1}^{1}\|
			_{L^{\infty}(\Omega_{\delta(x_1)/2}(x_1))}\le C \quad \mbox{for}~ x\in\Omega_{R}.
		\end{align*}
		By \eqref{v11esti},
		$$|\nabla{\bf u}_{1}^{1}(x)|\le C|\nabla {\bf v}_{1}^{1}(x)|+C \le 
		C\delta(x_1)^{-1}\quad \text{in}~\Omega_{R}.$$
		Thus, Proposition \ref{prop2.1} holds true for $m=0$.
		
		In the sequel, we need the following estimates for higher-order derivatives, $2\le k\le m+1,$ 
		\begin{equation}\label{barphighest}
			\begin{split}
				&|\partial_{x_1}^{k}\bar{p}_{1}|\le C\delta(x_1)^{-\frac{k+3}{2}},\quad\quad\quad|\partial_{x_1}^{k-1}\partial_{x_2}{\bar{p}}_{1}|\le 
				C\delta(x_1)^{-\frac{k+2}{2}}, \\ 
				&|\partial_{x_1}^{k-2}\partial_{x_2}^{2}{\bar{p}}_{1}|\le C\delta(x_1)^{-\frac{k+3}{2}},
				\quad\partial_{x_2}^{s}{\bar{p}_{1}}=0\quad\text{for}~s\ge3
			\end{split}\quad\text{in}~\Omega_{2R}.
		\end{equation}
		By \eqref{v11highesti} and \eqref{barphighest}, we have, for $m\ge 1$, 
		\begin{equation}\label{gradmv1p1}
			|\nabla^{m+1}{\bf v}_{1}^{1}|\le C \delta(x_1)^{-\frac{m+3}{2}},\quad|\nabla^m\bar{p}_{1}|\le 
			C\delta(x_1)^{-\frac{m+3}{2}}\quad\text{in}~\Omega_{2R}.
		\end{equation}
		
		{\bf Step II.  High-order Derivatives Estimates.} Next, we inductively derive the estimates for of $\nabla^{m+1}{\bf u}_{1}^{1}$ and $\nabla^{m}p_{1}^{1}$, based on Proposition \ref{prop3.3}.  
		
		To achieve this, we need to construct a series of auxiliary functions $({\bf v}_{1}^{l},\bar{p}_{l})$ to replace $({\bf v}_{1}^{1},\bar{p}_{1})$ in \eqref{w11} with $(\sum_{l=1}^{m+1}{\bf v}_{1}^{l},\sum_{l=1}^{m+1}\bar{p}_{l})$, and make the right-hand side 
		$$
		{\bf f}_{1}^{m+1}:=\Big(\mu\Delta\sum_{l=1}^{m+1}{\bf v}_{1}^{l}-\nabla\sum_{l=1}^{m+1}\bar{p}_{l}\Big)(x)
		$$ 
		as small as possible. 
		
		Define
		\begin{equation*}
			{\bf f}_{1}^{j}:={\bf f}_{1}^{j-1}(x)+\mu\Delta{\bf v}_{1}^{j}(x)-\nabla\bar{p}_{j}(x)=\sum_{l=1}^{j}(\mu\Delta{\bf v}_{1}^{l}-\nabla\bar{p}_{l})(x)
			\quad\text{for}~ 2\le j\le m+1.
		\end{equation*}
		We first prove by induction that for $j\ge1$, ${\bf f}_{1}^{j}$ can be represented as polynomials in $x_2$ as follows:
		\begin{equation}\label{f1jform}
			({\bf f}_{1}^{j})^{(1)}=\sum_{i=0}^{2j}S_{ij}(x_1)x_{2}^{i},\quad ({\bf f}_{1}^{j})^{(2)}=\sum_{i=0}^{2j+1}G_{ij}(x_1)x_{2}^{i}\quad\text{in}~\Omega_{2R},
		\end{equation}
		where
		\begin{equation}\label{f1jesti}
			|S_{ij}^{(k)}|\le C\delta(x_1)^{j-i-\frac{k}{2}-2},\quad|G_{ij}^{(k)}|\le 
			C\delta(x_1)^{j-i-\frac{k}{2}-\frac{3}{2}},\quad~\forall~k\ge0.
		\end{equation}
		
		Indeed, by virtue of \eqref{f11form} and \eqref{SGesti}, we have \eqref{f1jform} and 
		\eqref{f1jesti} for $j=1.$ Assume that \eqref{f1jform} and 
		\eqref{f1jesti} 
		hold for $j=l-1$ with $l\ge2$, that is,
		\begin{equation}\label{f1l-1form}
			({\bf f}_{1}^{l-1})^{(1)}=\sum_{i=0}^{2l-2}S_{i(l-1)}(x_1)x_{2}^{i},\quad
			({\bf f}_{1}^{l-1})^{(2)}=\sum_{i=0}^{2l-1}G_{i(l-1)}(x_1)x_{2}^{i}
			,	
		\end{equation}
		where, for any $k\ge0$,
		\begin{equation}\label{SGl-1est}
			|S_{i(l-1)}^{(k)}|\le C\delta(x_1)^{l-i-\frac{k}{2}-3},\quad|G_{i(l-1)}
			^{(k)}|\le C\delta(x_1)^{l-i-\frac{k}{2}-\frac{5}{2}}.
		\end{equation}
		By \eqref{f1l-1form} and \eqref{SGl-1est}, we have
		\begin{equation*}
			|({\bf f}_{1}^{l-1})^{(1)}|\leq C\delta(x_1)^{l-3},\quad |({\bf f}_{1}^{l-1})^{(1)}|\leq C\delta(x_1)^{l-\frac52}\quad \text{in}~\Omega_{2R}.
		\end{equation*}
		Note that the leading term in ${\bf f}_1^{l-1}$ is $({\bf f}_{1}^{l-1})^{(1)}$, which is of order $\delta(x_1)^{l-3}$. 
		We will construct ${\bf v}_{1}^{l}$ and $\bar{p}_l$ to cancel out this leading term such that ${\bf f}_{1}^{l}={\bf f}_{1}^{l-1}(x)+\mu\Delta{\bf v}_{1}^{l}(x)-\nabla\bar{p}_{l}(x)$ satisfies \eqref{f1jform} and \eqref{f1jesti}. 
		
		{\bf Step II.1. Construction of ${\bf v}_{1}^{l}$ and its estimates.} We construct a divergence-free auxiliary function 
		$${\bf v}_{1}^{l}=\Big(({\bf v}_{1}^{l})^{(1)}(x),({\bf 
			v}_{1}^{l})^{(2)}(x)\Big)^{T}$$ satisfying ${\bf v}_{1}^{l}(x)=0$ on 
		$\Gamma^{+}_{2R}\cup\Gamma^{-}_{2R}$ in the following form
		\begin{align}\label{v1ldef}
			\begin{split}
				({\bf v}_{1}^{l})^{(1)}(x)=&\,\Big(\sum_{i=0}^{2l-2}F_{1l}^{i}(x_1)x_{2}
				^{i}+\tilde{F}_{1l}(x_1) \Big)\Big(k(x)^2-\frac{1}{4}\Big), \\
				({\bf v}_{1}^{l})^{(2)}(x)=&\, \Big(\sum_{i=0}^{2l-1}F_{2l}^{i}(x_1)x_{2}
				^{i}+\tilde{F}_{2l}(x)\Big)\Big(k(x)^2-\frac{1}{4}\Big)
			\end{split}
			\quad\quad \text{in}~\Omega_{2R}.
		\end{align}
		
		We first construct functions $F_{jl}^i(x_1)$ and then $\tilde{F}_{jl}(x_1)$, $j=1,2$. Here the functions $F_{1l}^i(x_1)$, $i=0,1,\dots,2l-2$, are used to eliminate $({\bf f}_{1}^{l-1})^{(1)}$ through the following differential equation
		\begin{equation}\label{cancelf1l1}
			\mu\partial_{x_2x_2}\Big(\sum_{i=0}^{2l-2}F_{1l}^{i}(x_1)x_2^{i}
			\Big(k(x)^2-\frac14\Big)\Big)=-({\bf f}_{1}^{l-1})^{(1)}.
		\end{equation}
		Recalling the definition of $({\bf f}_{1}^{l-1})^{(1)}$, \eqref{f1l-1form}, it can be deduced that for $0\le i\le 2l-2,$
		\begin{equation}\label{dy1}
			F_{1l}^{i}(x_1)= -\frac{\delta(x_1)^2S_{i(l-1)}(x_1)}{\mu (i+1)(i+2)}+
			(h_1-h_2)(x_{1})F_{1l}^{i+1}(x_1)+\frac14(\varepsilon+2h_1(x_{1}))(\varepsilon+2h_2(x_{1}))F_{1l}^{i+2}(x_1),
		\end{equation}
		by comparing the coefficients of the polynomials of $x_2$ on both sides of \eqref{cancelf1l1}. Here we use the convention that $F_{1l}^{i}(x_1)\equiv0$ if $i\notin\{0,\dots,2l-2\}$ and $F_{2l}^{i}=0$ if $i\notin\{0,\dots,2l-1\}$. We note that once $F_{1l}^i$, $i=0,1,\dots,2l-2$ is determined, in general, there does not exist $F_{2l}^i$, $i=0,1,\dots,2l-1$ such that $\nabla\cdot{\bf v}_1^l=0$. We choose 
		\begin{align}\label{dy2}
			&F_{2l}^{i}(x_1)=(h_1-h_2)(x_{1})F_{2l}^{i+1}(x_1)+\frac14(\varepsilon+2h_1(x_{1}))(\varepsilon+2h_2(x_{1}))F_{2l}^{i+2}(x_1) \nonumber\\
			&\quad -\frac{\delta(x_1)^2}{i+2}\Big(\delta(x_1)^{-2}\Big(F_{1l}^{i-1}(x_1)-(h_1-h_2)(x_{1})
			F_{1l}^{i}(x_1)-\frac14(\varepsilon+2h_1)(\varepsilon+2h_2)F_{1l}^{i+1}(x_1)\Big)\Big)',
		\end{align}
		for $i=0,\dots,2l-1$, by comparing the coefficients of the polynomial of $x_2$, such that
		\begin{align}\label{zj1}
			\nabla\cdot{\bf v}_1^l=\nabla \cdot\begin{pmatrix}
				\Big(\sum_{i=0}^{2l-2}F_{1l}^{i}(x_1)x_{2}
				^{i}\Big)\Big(k(x)^2-\frac{1}{4}\Big)\\\\
				\Big(\sum_{i=0}^{2l-1}F_{2l}^{i}(x_1)x_{2}
				^{i}\Big)\Big(k(x)^2-\frac{1}{4}\Big)
			\end{pmatrix}:=R(x_1),
		\end{align}
		where
		\begin{align*}
			R(x_1)=&\,-\Big(\frac{(\varepsilon+2h_1(x_{1}))(\varepsilon+2h_2(x_{1}))}{4\delta(x_1)^2}F_{1l}^{0}(x_1)\Big)'\\
			&\,-
			\frac{(\varepsilon+2h_1(x_{1}))(\varepsilon+2h_2(x_{1}))}{4\delta(x_1)^2}F_{2l}^{1}(x_1)-\frac{h_1(x_1)-h_2(x_1)}{\delta(x_1)^2}F_{2l}^{0}(x_1)
		\end{align*}
		is the 0-th order term of $x_{2}$. 
		
		Next, in order to further eliminate the non-zero term $R(x_1)$ in \eqref{zj1}, we choose $\tilde{F}_{1l}(x_1)$ and $\tilde{F}_{2l}(x)$ such that
		\begin{equation}\label{yxws}
			\nabla \cdot\begin{pmatrix}
				\tilde{F}_{1l}(x_1)\Big(k(x)^2-\frac{1}{4}\Big) \\
				\tilde{F}_{2l}(x)\Big(k(x)^2-\frac{1}{4}\Big)
			\end{pmatrix}=-R(x_1).
		\end{equation}
		The process of finding $\tilde{F}_{jl}(x)$ is similar to the construction of ${\bf v}_{1}^{1}$ in Step I.1. By a direct calculation, \eqref{yxws} becomes
		\begin{equation}\label{fzzy1}
			\big(\tilde{F}_{1l}'(x_1)+\partial_{x_{2}}\tilde{F}_{2l}(x)\big)\Big(k(x)^2-\frac{1}{4}\Big)+2\tilde{F}_{1l}(x_1)k(x)\partial_{x_{1}}k(x)+\frac{2k(x)}{\delta(x_1)}\tilde{F}_{2l}(x)=-R(x_1).
		\end{equation}
		To determine $\tilde{F}_{1l}(x_1)$ and $\tilde{F}_{2l}(x)$, we set 
		\begin{equation}\label{fzzy2}
			\tilde{F}_{1l}'(x_1)+\partial_{x_{2}}\tilde{F}_{2l}(x)=4R(x_1).
		\end{equation}
		It follows from \eqref{fzzy1} and \eqref{fzzy2} that
		\begin{equation}\label{fzzy3}
			\tilde{F}_{2l}(x)=-2\delta(x_1)R(x_1)k(x)-\delta(x_1)\partial_{x_1}k(x)\tilde{F}_{1l}(x_1).
		\end{equation}
		By \eqref{kdds}, \eqref{fzzy2}, and \eqref{fzzy3}, we have
		\begin{equation*}
			\tilde{F}_{1l}'(x_1)+\frac{(h'_1+h'_2)(x_1)}{\delta(x_1)}\tilde{F}_{1l}(x_1)=6R(x_1).
		\end{equation*}
		Similarly as before, here we take special solutions
		\begin{equation}\label{dy3}
			\tilde{F}_{1l}(x_1)=\frac{6\int_{0}^{x_1}\delta(y)R(y)\ dy}{\delta(x_1)},\quad
			\tilde{F}_{2l}(x)=-2\delta(x_1)R(x_1)k(x)-\delta(x_1)\partial_{x_1}k(x)
			\tilde{F}_{1l}(x_1)
		\end{equation}
		by using \eqref{fzzy3}. Thus, the divergence-free condition $\nabla\cdot{\bf v}_1^l(x)=0$ holds in $\Omega_{2R}$. 
		By \eqref{def_k}, \eqref{kdds}, and \eqref{dy3}, we have
		\begin{align}\label{?dy3}
			\tilde{F}_{2l}(x)=&\,\Big(\frac{(h_1-h_2)'(x_1)}{\delta(x_1)}-2R(x_1)\Big)x_2\nonumber\\
			&+\Big((h_1-h_2)'(x_{1})-\frac{(h_1+h_2)'(h_1-h_2)(x_{1})}{2\delta(x_1)}\Big)\tilde{F}_{1l}(x_1)+(h_1-h_2)(x_{1})R(x_1)\nonumber\\
			:=&\,P(x_1)x_2+Q(x_1).
		\end{align}
		
		Moreover, by using \eqref{h1h14}, \eqref{kdds}, \eqref{SGl-1est}, \eqref{dy1}, \eqref{dy2}, \eqref{dy3}, and \eqref{?dy3}, a direct calculation gives, for $k\ge 0$ and $0\le i\le2l-2$, in $\Omega_{2R}$,
		\begin{align}\label{v1lF1l2lest}
			\begin{split}
				&|F_{1l}^{i(k)}|\le C\delta(x_1)^{l-i-\frac{k}{2}-1},\qquad|\tilde{F}_{1l}
				^{(k)}|\le C\delta(x_1)^{l-\frac{k}{2}-1},\\
				&|F_{2l}^{i(k)}|\le C\delta(x_1)^{l-i-\frac{k}{2}-\frac{1}{2}},
				\quad\quad|F_{2l}^{(2l-1)(k)}|\le C\delta(x_1)^{-l-\frac{k}{2}+\frac12},
			\end{split}       
		\end{align}
		and 
		\begin{equation}\label{PQest}
			|P^{(k)}|\le C\delta(x_1)^{l-\frac{k}{2}-\frac32},\quad|Q^{(k)}|
			\le C \delta(x_1)^{l-\frac{k}{2}-\frac12},
		\end{equation}
		where $P(x_1)$ and $Q(x_1)$ are defined in \eqref{?dy3}. By \eqref{v1lF1l2lest}, we find that $\frac{2\tilde{F}_{1l}(x_1)}{\delta(x_1)^{2}}$, which is of order $\delta(x_1)^{l-3}$, becomes a new leading term in ${\bf f}_{1}^{l-1}+\mu\Delta{\bf v}_{1}^{l}$. 
		
		{\bf Step II.2. Construction of $\bar{p}_{l}$ and its estimates.} We divide the construction of $\bar{p}_{l}$ into two steps as follows: 
		$$\bar{p}_{l}:=\hat{p}_{l}+\tilde{p}_{l}.$$ 
		
		To eliminate the new leading term $\frac{2\tilde{F}_{1l}(x_1)}{\delta(x_1)^{2}}$,  
		we set
		\begin{equation}\label{hatp1l}
			\hat{p}_{l}=2\mu\int_{0}^{x_1}\frac{\tilde{F}_{1l}(y)}{\delta(y)^2}\ dy 
			\quad\text{in}~\Omega_{2R}.
		\end{equation}
		Then,
		\begin{equation}\label{hatpcancel1}
			\partial_{x_1}\hat{p}_{l}=\frac{2\mu}{\delta(x_1)^2}\tilde{F}_{1l}(x_1)=\mu\partial_{x_2x_2}\Big(\tilde{F}_{1l}(x_1)
			\Big(k(x)^2-\frac{1}{4}\Big)\Big),\quad \partial_{x_2}\hat{p}_{l}=0.
		\end{equation}
		From \eqref{v1ldef} and \eqref{dy3}, we know that $({\bf v}_{1}^{l})^{(1)}(x)$ is a polynomial of order $2l$ in $x_2$, while $({\bf v}_{1}^{l})^{(2)}(x)$ is a polynomial of order $2l+1$ in $x_2$. Combining with \eqref{f1l-1form}, \eqref{cancelf1l1}, and \eqref{hatpcancel1}, we have
		\begin{align}\label{f1l-1-hatp}
			&{\bf f}_{1}^{l-1}+\mu\Delta{\bf v}_{1}^{l}-\nabla\hat{p}_{l}\nonumber\\=&\begin{pmatrix}
				\mu\partial_{x_1x_1}({\bf v}_{1}^{l})^{(1)}\\\\
				({\bf f}_{1}^{l-1})^{(2)}+\mu\Delta({\bf v}_{1}^{l})^{(2)}
			\end{pmatrix}
			=\begin{pmatrix}
				\sum_{i=0}^{2l}H_{il}(x_1)x_2^{i}\\\\
				\sum_{i=0}^{2l+1}G_{il}(x_1)x_2^{i}+\sum_{i=0}^{2l-1}\tilde{S}_{il}(x_1)x_2^{i}
			\end{pmatrix},
		\end{align}
		where 
		$$ \sum_{i=0}^{2l+1}G_{il}(x_1)x_2^{i}=\mu\partial_{x_1x_1}({\bf v}_{1}^{l})^{(2)},\quad\mbox{and}~\sum_{i=0}^{2l-1}\tilde{S}_{il}(x_1)x_2^{i}= ({\bf f}_{1}^{l-1})^{(2)}+\mu\partial_{x_2x_2}({\bf v}_{1}^{l})^{(2)}.$$
		By \eqref{h1h14}, \eqref{v1ldef}, \eqref{v1lF1l2lest}, and \eqref{PQest}, we obtain for $k\ge0,$
		\begin{equation}\label{HGtilSest}
			|H_{il}^{(k)}|\le C \delta(x_1)^{l-i-\frac{k}{2}-2},\,\,\,|G_{il}^{(k)}|
			\le\delta(x_1)^{l-i-\frac{k}{2}-\frac{3}{2}},\,\,\,|\tilde{S}_{il}^{(k)}|\le 
			C\delta(x_1)^{l-i-\frac{k}{2}-\frac{5}{2}}.
		\end{equation}
		By \eqref{f1l-1-hatp} and \eqref{HGtilSest}, 
		\begin{align*}
			&|\partial_{x_1x_1}({\bf v}_{1}^{l})^{(1)}|\leq C\delta(x_1)^{l-2},\,\quad  |\partial_{x_1x_1}({\bf v}_{1}^{l})^{(2)}|\leq C\delta(x_1)^{l-\frac32},\\
			& |({\bf f}_{1}^{l-1})^{(2)}+\mu\partial_{x_2x_2}({\bf v}_{1}^{l})^{(2)}|\leq C\delta(x_1)^{l-\frac52}.
		\end{align*}
		Notice that the leading term in $({\bf f}_{1}^{l-1})^{(2)}+\mu(\Delta{\bf v}_{1}^{l})^{(2)}-\partial_{x_{2}}\hat{p}_{l}$ is $\sum_{i=0}^{2l-1}\tilde{S}_{il}(x_1)x_2^{i}$. 
		
		Thus, to further reduce the upper bound of the inhomogeneous term, we choose 
		\begin{equation}\label{tilpl}
			\tilde{p}_{l}=\sum_{i=0}^{2l-1}\frac{\tilde{S}_{il}(x_1)}{i+1}x_2^{i+1}
			\quad\text{in}~\Omega_{2R},
		\end{equation}
		such that
		$	\partial_{x_{2}}\tilde{p}_{l}=\sum_{i=0}^{2l-1}\tilde{S}_{il}(x_1)x_2^{i}.$ Now we define an auxiliary pressure function $\bar{p}_{l}:=\hat{p}_{l}+\tilde{p}_{l}.$ 
		
		{\bf Step II.3. Estimates of ${\bf f}_{1}^{l}$.} 
		From \eqref{f1l-1-hatp} and \eqref{tilpl}, we have
		\begin{equation*}
			\begin{split}
				({\bf f}_{1}^{l})^{(1)}=&\,\sum_{i=1}^{2l}\Big(H_{il}-\frac1i\tilde{S}
				_{(i-1)l}'\Big)(x_1)x_2^{i} +H_{0l}(x_1):=\sum_{i=0}^{2l}
				S_{il}(x_1)x_2^{i},\\
				({\bf f}_{1}^{l})^{(2)}=&\,\sum_{i=0}^{2l+1}G_{il}(x_1)x_2^{i},
			\end{split}	
		\end{equation*} 
		where $$S_{il}(x_1)=H_{il}(x_1)-\frac1i \tilde{S}_{(i-1)l}'(x_1),\,\,1\le 
		i\le2l,\quad S_{0l}(x_1)=H_{0l}(x_1).$$
		
		In view of \eqref{HGtilSest}, we have for $k\ge0,$
		$$|S_{il}^{(k)}|\le C\delta(x_1)^{l-i-\frac{k}{2}-2},\quad |G_{il}^{(k)}|\le 
		C\delta(x_1)^{l-i-\frac{k}{2}-\frac{3}{2}}.$$
		Thus, \eqref{f1jform} and \eqref{f1jesti} hold for any $j\ge 1.$ By using 
		\eqref{v1ldef}, \eqref{v1lF1l2lest}--\eqref{hatp1l}, and \eqref{tilpl}, a direct calculation yields for $l\ge 2$ and $k\ge 0$,
		\begin{align}\label{highvpfl-1esti}
			\begin{split}
				&|\partial_{x_1}^{k}\partial_{x_2}^{s}({\bf v}_{1}^{l})^{(1)}|
				\le\delta(x_1)^{l-s-\frac{k+2}{2}}\quad\text{for}~ s\le 2l,\\
				&\partial_{x_2}^{s}({\bf v}_{1}^{l})^{(1)}=0\quad\text{for}~s\ge 2l+1, \\
				&|\partial_{x_1}^{k}\partial_{x_2}^{s}({\bf v}_{1}^{l})^{(2)}|
				\le\delta(x_1)^{l-s-\frac{k+1}{2}}\quad\text{for}~ s\le 2l+1,\\
				&\partial_{x_2}^{s}({\bf v}_{1}^{l})^{(2)}=0\quad\text{for}~s\ge 
				2l+2, \\
				&|\partial_{x_1}^{k}\partial_{x_2}^{s}\tilde{p}_{l}|\le C\delta(x_1)^{l-s-
					\frac{k+3}{2}}\quad\text{for}~0\leq s\le2l,\\
				&\partial_{x_2}^{s}\tilde{p}_{l}=0\quad\text{for}~s\ge 2l+1,\,\,|\partial_{x_1}
				^{k}\hat{p}_{l}|\le C\delta(x_1)^{l-\frac{k}{2}-\frac{5}{2}}.
			\end{split}	
		\end{align}
		From \eqref{f1jform} and \eqref{f1jesti}, we have
		$|({\bf f}_{1}^{m+1})^{(1)}| \le C\delta(x_1)^{m-1},\,\,|({\bf f}_{1}
		^{m+1})^{(2)}|\le C \delta(x_1)^{m-\frac{1}{2}}$, and then
		$$|{\bf f}_{1}^{m+1}|\le |({\bf f}_{1}^{m+1})^{(1)}|+|({\bf f}_{1}^{m+1})^{(2)}|\leq C \delta(x_1)^{m-1}. $$	
		Similarly, for $1\le s\le m,$
		$$|\nabla^{s}{\bf f}_{1}^{m+1}|\le C \delta(x_1)^{m-s-1}.$$
		
		{\bf Step II.4. Estimates of $\nabla^{m+1}{\bf u}_{1}^{1}$ and $\nabla^{m}p_{1}^{1}$.} 
		We denote
		\begin{equation*}
			{\bf v}^{m+1}(x):=\sum_{l=1}^{m+1}{\bf v}_{1}^{l}(x),\quad \bar{p}^{m+1}(x):=\sum_{l=1}^{m+1}\bar{p}_{l}(x).
		\end{equation*}
		It is easy to verify that ${\bf v}^{m+1}(x)={\bf u}_1^1(x)$ on $\Gamma_{2R}^{\pm}$, $\nabla\cdot{\bf v}^{m+1}(x)=0$ in $\Omega_{2R}$, and
		\begin{align*}
			|{\bf f}^{m+1}(x)|=&\,|\mu\Delta{\bf v}^{m+1}(x)-\nabla \bar{p}^{m+1}(x)|\leq C \delta(x_1)^{m-1},\\
			|\nabla^s{\bf f}^{m+1}(x)|\leq&\, C \delta(x_1)^{m-s-1},~ 1\le s\le m.
		\end{align*}
		Thus, by applying Proposition \ref{prop3.3}, it holds that
		\begin{equation}\label{u11-vkest}
			\|\nabla^{m+1}({\bf u}_{1}^{1} -{\bf v}^{m+1})\|_{L^{\infty}(\Omega_{\delta(x_1)/2}(x_1))} +\|\nabla^{m} (p_{1}^{1}-\bar{p}^{m+1})\|_{L^{\infty}(\Omega_{\delta(x_1)/2}(x_1))} \le C.
		\end{equation}
		
		By \eqref{highvpfl-1esti}, we obtain for $l\ge 2$,
		\begin{equation}\label{v1lest}
			\begin{split}
				&|\nabla^{m+1}{\bf v}_{1}^{l}|\le C\delta(x_1)^{-\frac{m+3}{2}}\quad\text{for}
				~l\le \frac{m+1}{2},\\
				&|\nabla^{m+1}{\bf v}_{1}^{l}|\le C\delta(x_1)^{l-m-2}
				\quad\text{for}~\frac{m+1}{2}\le l\le m+1,\\
				&|\nabla^{m}\bar{p}_{l}|\le C \delta(x_1)^{-\frac{m+3}{2}}\quad\text{for}~ 
				l\le \frac{m-1}{2},\quad 	|\nabla\bar{p}_{l}|\leq C\delta(x_1)^{l-3},\\ 
				&|\nabla^{m}\bar{p}_{l}|\le C\delta(x_1)^{l-m-\frac{3}
					{2}}\quad\text{for}~1\le\frac{m}{2}\le l\le m+1,\quad |\bar{p}_{l}|\leq C\delta(x_1)^{l-5/2}.
			\end{split}
		\end{equation}
		It follows from \eqref{gradmv1p1x}, \eqref{gradmv1p1}, and \eqref{v1lest} that
		\begin{equation}\label{zgj}
			|\nabla{\bf v}^{m+1}(x)|\leq C\delta(x_1)^{-1}
		\end{equation}
		and for $m\ge 1$,
		\begin{equation}\label{zgj1}
			|\nabla^{m+1}{\bf v}^{m+1}(x)|+|\nabla^{m}\bar{p}^{m+1}(x)|\leq C\delta(x_1)^{-\frac{m+3}{2}}.
		\end{equation}
		By \eqref{u11-vkest}, \eqref{zgj}, and \eqref{zgj1}, we have for $x\in\Omega_{R},$
		\begin{equation*}
			|\nabla{\bf u}_{1}^{1}(x)|\leq C\delta(x_1)^{-1}\quad\text{in}~\Omega_{R},
		\end{equation*}
		and
		\begin{equation*}
			|\nabla^{m+1}{\bf u}_{1}^{1}(x)|+|\nabla^{m}{p}_{1}^{1}(x)|\le C \delta(x_1)^{-\frac{m+3}{2}} \quad\text{in}~\Omega_{R}.
		\end{equation*}
		
		For the estimates of $p_1^1(x)$, by the mean value theorem, \eqref{barpesti}, \eqref{u11-vkest}, and \eqref{v1lest}, we have
		\begin{align*}
			|p_{1}^{1}(x)-p_{1}^{1}(z_1,0)|\leq&\, 	|p_{1}^{1}-\bar{p}_1-\bar{p}_2-(p_{1}^{1}-\bar{p}_1-\bar{p}_2)(z_1,0)|+|\bar{p}_1+\bar{p}_2|+C\\
			\leq&\, C\|\nabla (p_{1}^{1}-\bar{p}_1-\bar{p}_2)\|_{L^\infty(\Omega_{R})}+|\bar{p}_1+\bar{p}_2|+C\\
			\leq&\, C \delta(x_1)^{-3/2}
		\end{align*}
		for a fixed point $(z_1,0)\in\Omega_{R}$ with $|z_1|=R/2$.
		This completes the proof of Proposition \ref{prop2.1}.
	\end{proof}
	
	\begin{remark}
		We would like to emphasize that the asymmetry of the inclusions leads to an increase in the singularity of \(\nabla^2{\bf u}_1^1\). Specifically, from \eqref{v11def} and \eqref{deff}, it can be deduced that the leading term of \(\nabla^2({\bf v}_1^1)^{(1)}\) is \(2\delta(x_1)^{-2}F(x)\), which is of order \(\delta(x_1)^{-2}\) when \(h_1(x_1) \neq h_2(x_1)\). However, if \(h_1(x_1) = h_2(x_1)\), the leading terms of \(\nabla^2({\bf v}_1^1)^{(1)}\) are \(\partial_{x_1x_2}k(x)\) and \(2\delta(x_1)^{-2}G(x)\), both of which are of order \(\delta(x_1)^{-3/2}\). 
	\end{remark}

	\subsection{The case when $h_{1}(x_{1})=h_{2}(x_{1})=\frac{1}{2}h(x_{1})$}
	When $h_1(x_1)= h_2(x_1)$, the estimates for the higher-order derivatives and $p_{1}^{1}(x)$ in Proposition \ref{prop2.1} can be improved by an alternative method. In this case, we use the Green function approach to construct the auxiliary ${\bf v}_{1}^{l} $ in the induction process, similar to \cite{DLTZ} for the Lam\'e system. 
	
	\begin{prop}\label{prop2.1g}
		Under the same assumption as in Theorem \ref{main thm1}, let ${\bf u}_{1}^{1}$ 
		and $p_{1}^{1}$ be the solution to \eqref{u,peq1} with $\alpha=1$ and $h_1(x_1)=h_2(x_1)=\frac{1}{2}h(x)$. Then for sufficiently small $0<\varepsilon<1/2$, we have
		$$\mbox{(i)}~|\nabla{\bf u}_{1}^{1}(x)|\le C(\varepsilon+|x_1|^2)^{-1},\quad |p_1^1(x)-p_1^1(z_1,0)|\leq C(\varepsilon+|x_1|^2)^{-1/2},\quad x\in\Omega_{R},$$
		for some  point $z=(z_1,0)\in \Omega_{R}$ with $|z_1|=R/2$; and
		
		(ii) for any $m\ge 1$, 
		\begin{equation*}
			|\nabla^{m+1}{\bf u}_{1}^{1}(x)|+|\nabla^{m}{p}_{1}^{1}(x)| \le 
			C(\varepsilon+|x_1|^2)^{-\frac{m+2}{2}},\quad x\in\Omega_{R}.
		\end{equation*}
	\end{prop}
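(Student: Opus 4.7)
\medskip

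\noindent\textbf{Proof proposal.} The plan is to follow the same induction framework as in Proposition \ref{prop2.1} based on Proposition \ref{prop3.3}, constructing a sequence of divergence-free auxiliary pairs $({\bf v}_1^l,\bar p_l)$ in $\Omega_{2R}$ whose cumulative sums agree with $({\bf u}_1^1,p_1^1)$ on $\Gamma_{2R}^{\pm}$ and make the residual ${\bf f}_1^{m+1}=\mu\Delta\sum_{l=1}^{m+1}{\bf v}_1^l-\nabla\sum_{l=1}^{m+1}\bar p_l$ small enough pointwise. The key simplification under $h_1=h_2=h/2$ is that the Keller-type function reduces to $k(x)=x_2/\delta(x_1)$, the asymmetry-induced function $F(x_1)=-3(h_1-h_2)/\delta$ in \eqref{deff} vanishes identically, and the ansatz \eqref{v11def} collapses to ${\bf v}_1^1=\bigl(k+\tfrac12,\ \tfrac12 h'(x_1)(k^2-\tfrac14)\bigr)^{T}$. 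Likewise the nonlocal integral in \eqref{barp1} vanishes, leaving $\bar p_1=\mu\partial_{x_2}({\bf v}_1^1)^{(2)}$, which is of order only $\delta(x_1)^{-1/2}$ rather than $\delta(x_1)^{-3/2}$. This is the structural source of the $\delta^{1/2}$ improvement throughout the proposition.

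For the base case $m=0$, I would compute ${\bf f}_1^1=\mu\Delta {\bf v}_1^1-\nabla\bar p_1$ directly from the simplified forms and show $|{\bf f}_1^1(x)|\le C\delta(x_1)^{-1/2}$ in $\Omega_{2R}$. Applying Proposition \ref{prop3.3} with $l=-1/2$ then yields $\|\nabla({\bf u}_1^1-{\bf v}_1^1)\|_{L^\infty(\Omega_{\delta/2}(x_1))}\le C$ (up to $O(1)$) and correspondingly $\|p_1^1-\bar p_1\|_{L^\infty(\Omega_{\delta/2}(x_1))}\le C\delta^{1/2}\cdot \delta^{-1}=C$ control on the normalized pressure, which gives $|\nabla {\bf u}_1^1|\le C\delta^{-1}$ and, via the mean value theorem as at the end of the proof of Proposition \ref{prop2.1}, the improved pressure bound $|p_1^1(x)-p_1^1(z_1,0)|\le C\delta(x_1)^{-1/2}$.

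For $m\ge 1$, at each level $l$ I would construct ${\bf v}_1^l$ with vanishing trace on $\Gamma_{2R}^{\pm}$ in the same polynomial-times-$(k^2-\tfrac14)$ form as \eqref{v1ldef}, but using a Green-function representation for the one-dimensional boundary value problem $\partial_{x_2}^2 v=g$ on the strip $\{-\delta(x_1)/2<x_2<\delta(x_1)/2\}$ with homogeneous Dirichlet data, in the spirit of \cite{DLTZ} for the Lam\'e system. The first component of ${\bf v}_1^l$ is chosen to cancel $({\bf f}_1^{l-1})^{(1)}$, the second component is corrected polynomially in $x_2$ to restore $\nabla\cdot{\bf v}_1^l=0$, and $\bar p_l=\hat p_l+\tilde p_l$ is built as in \eqref{hatp1l}--\eqref{tilpl} to kill the newly generated leading terms in the second component. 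The symmetry $h_1=h_2$ ensures that every occurrence of the asymmetry factor $(h_1-h_2)$ driving the extra $\delta^{-1/2}$ loss in \eqref{dy1}, \eqref{dy2}, and \eqref{?dy3} is absent, so a clean induction yields the improved polynomial coefficient bounds $|S_{il}^{(k)}|\le C\delta^{l-i-k/2-3/2}$ and $|G_{il}^{(k)}|\le C\delta^{l-i-k/2-1}$, and hence $|\nabla^{m+1}{\bf v}_1^l|+|\nabla^m\bar p_l|\le C\delta^{-(m+2)/2}$.

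The principal obstacle is to track this $\delta^{1/2}$ gain consistently across every induction level while simultaneously preserving the homogeneous Dirichlet trace on $\Gamma_{2R}^{\pm}$ and the divergence-free condition in $\Omega_{2R}$. The Green-function construction on the variable-width strip handles the boundary conditions automatically, but the incompressibility requirement couples the two components and generates nonpolynomial remainders (the analog of $R(x_1)$ in \eqref{zj1}) that must be absorbed by additional corrections analogous to $\tilde F_{jl}$; verifying that these corrections still respect the improved size bounds is the delicate bookkeeping step. Once the induction is complete, the final assembly via Proposition \ref{prop3.3} together with the mean value theorem for the normalized pressure mirrors the end of the proof of Proposition \ref{prop2.1}.
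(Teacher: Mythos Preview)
Your overall framework is right, and your first auxiliary pair agrees exactly with the paper's (in the symmetric case the construction of Proposition~\ref{prop2.1} collapses to ${\bf v}_1^1=(k+\tfrac12,\tfrac12 h'(k^2-\tfrac14))^T$ and $\bar p_1=\mu h' x_2/\delta^2$, which is what the paper writes directly). But two points in your reasoning are off, and the second one is where the actual improvement lives.

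First, the claim $|{\bf f}_1^1|\le C\delta^{-1/2}$ is incorrect. A direct computation gives $({\bf f}_1^1)^{(1)}=\mu\partial_{x_1x_1}({\bf v}_1^1)^{(1)}-\partial_{x_1}\bar p_1=-2\mu x_2(h'/\delta^2)'$, which is of order $\delta^{-1}$, not $\delta^{-1/2}$; the paper also records $|{\bf f}_1^1|\le C\delta^{-1}$. So Proposition~\ref{prop3.3} is applied with $l=-1$, just as in the asymmetric case, and this step alone does not explain the $\delta^{1/2}$ gain.

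Second, and more importantly, you anticipate needing $\tilde F_{jl}$-type corrections to absorb an $R(x_1)$ remainder from the divergence-free condition. In the symmetric construction there is no such remainder: the paper defines $({\bf v}_1^l)^{(2)}(x)=-\int_{-\delta/2}^{x_2}\partial_{x_1}({\bf v}_1^l)^{(1)}(x_1,y)\,dy$, which gives $\nabla\cdot{\bf v}_1^l=0$ exactly, and the boundary condition on $\Gamma^+$ holds automatically because $({\bf v}_1^l)^{(1)}$ is \emph{odd} in $x_2$ (inductively, $({\bf f}_1^{l-1})^{(1)}$ is odd, and the Green function on the symmetric strip preserves this). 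This parity is the real engine of the proposition: it forces $({\bf v}_1^l)^{(1)}$ to have $x_2$-degree $2l-1$ rather than $2l$, which is precisely the $\delta^{1/2}$ gain in \eqref{v1lestg} versus \eqref{v1lest}; and it makes every $\bar p_l$ odd in $x_2$, so that $\bar p_l(x_1,0)=0$ and the pressure argument \eqref{p11minus}--\eqref{p11esti} goes through via the mean value theorem. Your proposal does invoke the mean value theorem at the end, but without the parity observation you have neither the correct degree count for the induction nor the vanishing of $\bar p_1+\bar p_2$ on $\{x_2=0\}$ needed for the improved pressure bound.
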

	\begin{proof}
		We follow the notation in Proposition \ref{prop2.1}. If $h_1(x_1)=h_2(x_1)=\frac{1}{2}h(x_{1})$, then $\delta(x_1)=\varepsilon+h(x_1)$, and the Keller-type function $k(x)=\frac{x_2}{\delta(x_1)}$ in $\Omega_{2R}$. We begin by choosing ${\bf v}_{1}^{1}=(({\bf v}_{1}^{1})^{(1)},({\bf v}_{1}^{1})^{(2)})$ with
		\begin{equation}\label{v111defg}
			({\bf v}_{1}^{1})^{(1)}(x)=\frac{x_2}{\delta(x_1)}+\frac{1}{2} 
			\quad\text{for}~
			x\in\Omega_{2R},
		\end{equation}
		and 
		\begin{equation}\label{v112defg}
			({\bf 
				v}_{1}^{1})^{(2)}(x)=-\int_{-\frac{1}{2}\delta(x_1)}^{x_2}\partial_{x_1}
			({\bf v}_{1}^{1})^{(1)}(x_1,y)\ dy \quad \text{for}~x\in\Omega_{2R}.
		\end{equation} 
		
		It is easy to check that
		\begin{equation}\label{divfree}
			\nabla\cdot{\bf v}_{1}^{1}=\partial_{x_1}({\bf 
				v}_{1}^{1})^{(1)}+\partial_{x_2}
			({\bf v}_{1}^{1})^{(2)}=0.
		\end{equation}
		At this moment, noting that 
		\begin{equation}\label{divfree2}
			\partial_{x_1}({\bf v}_{1}^{1})^{(1)}(x)=-\frac{h'(x_1)}
			{\delta(x_1)^2}x_2
		\end{equation} is an odd function with respect to $x_2$, so we have $({\bf 
			v}_{1}^{1})^{(2)}|_{\Gamma^{\pm}_{2R}}=0.$ Therefore, there is no need to solve function $F(x)$ and $G(x)$ as in Step I.1 in the proof of Proposition \ref{prop2.1}
		
		A direct calculation yields,  for $0\le k\le m+1$ and $s\ge 2$,
		\begin{equation}\label{v11highestig}
			|\partial_{x_1}^{k}({\bf v}_{1}^{1})^{(1)}|\le C\delta(x_1)^{-k/2},
			\quad|\partial_{x_1}^{k-1}\partial_{x_2}({\bf v}_{1}^{1})^{(1)}|\le 
			C\delta(x_1)^{-(k+1)/2},\quad\partial_{x_2}^{s}({\bf v}_{1}^{1})^{(1)}=0.
		\end{equation}
		Thus, by using \eqref{v112defg}--\eqref{v11highestig}, 
		\begin{equation*}
			|\partial_{x_1}({\bf v}_{1}^{1})^{(2)}|\le C, \quad |\partial_{x_2}({\bf v}_{1}^{1})^{(2)}|=|\partial_{x_1}({\bf v}_{1}^{1})^{(1)}|\le C\delta(x_1)^{-1/2}.
		\end{equation*}
		Similarly, for $2\le k\le m+2,$
		\begin{equation}\label{v112highestig}
			|\partial_{x_1}^{k}({\bf v}_{1}^{1})^{(2)}|\le 
			C\delta(x_1)^{-\frac{k-1}{2}}\quad \text{in}~\Omega_{2R}.
		\end{equation}
		
		Then we only need to choose
		\begin{equation}\label{barp1g}
			\bar{p}_{1}=\frac{\mu h'(x_1)}{\delta(x_1)^2}x_2\quad\text{in}
			~\Omega_{2R},
		\end{equation}
		By \eqref{v112defg}, \eqref{divfree2}, and \eqref{barp1g}, it is easy to 
		verify that
		\begin{equation}\label{barp1eqsg}
			\mu\partial_{x_2x_2}({\bf 
				v}_{1}^{1})^{(2)}-\partial_{x_2}{\bar{p}_{1}}=\partial_{x_2}(\mu\partial_{x_2}({\bf 
				v}_{1}^{1})^{(2)}-{\bar{p}_{1}})=\partial_{x_2}(-\mu\partial_{x_1}({\bf v}_{1}^{1})^{(1)}-{\bar{p}_{1}})=0
		\end{equation}
		and
		\begin{align*}
			|\partial_{x_1}\bar{p}_{1}|\le C\delta(x_1)^{-1}.
		\end{align*}
		Then, we have
		\begin{equation*}
			({\bf f}_{1}^{1})^{(1)}=\mu \partial_{x_1x_1}({\bf v}_{1}^{1})^{(1)}-
			\partial_{x_1}\bar{p}_{1},\quad({\bf f}_{1}^{1})^{(2)}=\mu 
			\partial_{x_1x_1}({\bf 
				v}_{1}^{1})^{(2)},
		\end{equation*}
		and by using \eqref{v11highestig} and  \eqref{v112highestig},
		\begin{equation*}
			|{\bf f}_{1}^{1}|\le C (|({\bf f}_{1}^{1})^{(1)}|+|({\bf 
				f}_{1}^{1})^{(2)}|) \le 
			C \delta(x_1)^{-1}\quad \text{in}~\Omega_{2R}.
		\end{equation*}
		
		For $k$-th order derivatives, it follows from \eqref{v112defg}, \eqref{divfree}, and 
		\eqref{v11highestig} that in $\Omega_{2R}$,
		\begin{equation}\label{v112highesti1g}
			|\partial_{x_1}^{k}\partial_{x_2}^{s}({\bf 
				v}_{1}^{1})^{(2)}|=|\partial_{x_1}
			^{k+1}\partial_{x_2}^{s-1}({\bf v}_{1}^{1})^{(1)}|\le C\delta(x_1)^{-
				\frac{k+2s-1}{2}}\,\text{for}~1\le s\le2,
		\end{equation}
		and for $s\ge 3$, 
		\begin{equation*}
			\partial_{x_2}^{s}({\bf v}_{1}^{1})^{(2)}=-\partial_{x_2}^{s-1}(\partial_{x_1}
			({\bf v}_{1}^{1})^{(1)})=0\quad\text{in}~\Omega_{2R};
		\end{equation*}
		and for $1\le k\le m+1,$ 
		\begin{align}\label{barphighestg}
			|\partial_{x_1}^{k}\bar{p}_{1}|\le C\delta(x_1)^{-\frac{k+1}{2}},\quad\,|
			\partial_{x_1}^{k-1}\partial_{x_2}{\bar{p}}_{1}|\le 
			C\delta(x_1)^{-\frac{k+2}{2}},
			\,\quad\,\partial_{x_2}^{s}{\bar{p}_{1}}=0\quad\text{for}~s\ge2.
		\end{align}
		
		Next, we inductively choose auxiliary functions  ${\bf v}_{1}^{l}(x)$ and $\bar{p}_{l}(x)$ to further reduce the upper bound of ${\bf f}_1^l$ in $\Omega_{2R}$. Different from the proof of Proposition \ref{prop2.1}, the following expression of  auxiliary functions by Green function is much simpler. Specifically, instead of \eqref{v1ldef}, we choose ${\bf v}_{1}^{l}=(({\bf v}_{1}^{l})^{(1)},({\bf v}_{1}^{l})^{(2)})^{T}$ with 
		\begin{equation}\label{v1kdef}
			\begin{split}
				({\bf v}_{1}^{l})^{(1)}(x)=&\,-\frac{1}{\mu}\int_{-\frac{1}{2}\delta(x_1)}^{
					\frac{1}{2}\delta(x_1)}G(x_2,y)({\bf f}_1^{l-1})^{(1)}(x_1,y) \ 
				dy, \\
				({\bf v}_{1}^{l})^{(2)}(x)=&\,-\int_{-\frac{1}{2}\delta(x_1)}^{x_2}
				\partial_{x_1}({\bf v}_{1}^{l})^{(1)}(x_1,y)\ dy
			\end{split}\quad \text{in}~\Omega_{2R},
		\end{equation}
		satisfying ${\bf v}_{1}^{l}(x)=0$ on $\Gamma^{+}_{2R}\cup\Gamma^{-}_{2R}$, where
		\begin{align*}
			G(x_2,y)=\frac{1}{\delta(x_1)}\left\{ \begin{array}{l}
				(y+\frac12h(x_1)+\frac{\varepsilon}{2})(x_2-\frac12h(x_1)-\frac{\varepsilon}{2}),
				\quad-\frac{\varepsilon}{2}-\frac12h(x_1)\le y\le x_2,\\\\
				(x_2+\frac12h(x_1)+\frac{\varepsilon}{2})(y-\frac12h(x_1)-\frac{\varepsilon}{2}),\quad
				x_2\le y\le \frac{\varepsilon}{2}+\frac12h(x_1).
			\end{array} \right.
		\end{align*} 
		Then it is easy to see that
		\begin{equation}\label{v1keq}
			\begin{split}
				\mu\partial_{.x_2x_2}({\bf v}_{1}^{l})^{(1)}=&\,-({\bf f}_{1}^{l-1})^{(1)},\\
				\nabla\cdot{\bf v}_{1}^{l}=&\,\partial_{x_1}({\bf v}_{1}^{l})^{(1)}+\partial_{x_2}
				({\bf v}_{1}^{l})^{(2)}=0.
			\end{split}
		\end{equation}
		
		Furthermore, we take 
		\begin{equation}\label{barpkdef}
			\bar{p}_{l}=\mu\int_{0}^{x_2} \Big(\partial_{x_1x_1}({\bf v}_{1}^{l-1})^{(2)}+
			\partial_{x_2x_2}({\bf v}_{1}^{l})^{(2)} \Big) (x_1,y) \ 
			dy\quad\text{in}~\Omega_{2R},
		\end{equation}
		such that
		\begin{equation}\label{barpkv1kv1k-1}
			\partial_{x_2}\bar{p}_{l}=\mu\partial_{x_1x_1}({\bf 
				v}_{1}^{l-1})^{(2)}+ \mu 
			\partial_{x_2x_2}({\bf v}_{1}^{l})^{(2)}.
		\end{equation}
		By \eqref{v1keq} and \eqref{barpkv1kv1k-1}, we have
		\begin{equation*}
			\begin{split}
				&({\bf f}_{1}^{l})^{(1)}=({\bf f}_{1}^{l-1})^{(1)} +\mu \Delta({\bf v}_{1}
				^{l})^{(1)}-\partial_{x_1}\bar{p}_{l}=\mu\partial_{x_1x_1}({\bf 
					v}_{1}^{l})^{(1)}-\partial_{x_1}\bar{p}_{l}, \\
				&({\bf f}_{1}^{l})^{(2)}=({\bf f}_{1}^{l-1})^{(2)}+\mu \Delta({\bf v}_{1}
				^{l})^{(2)}-\partial_{x_2}\bar{p}_l
				=\mu\partial_{x_1x_1}({\bf v}_{1}^{l})^{(2)}.
			\end{split}
		\end{equation*}
		
		We can inductively prove the following estimates, for $j\ge1$ and $j=1,2,
		\dots,m+1$,
		\begin{align}\label{vpfesti}
			&|({\bf v}_{1}^{j})^{(1)}|\le C\delta(x_1)^{j-1},\quad|({\bf v}_{1}^{j})^{(2)}|
			\le C\delta(x_1)^{(2j-1)/2},\quad|\bar{p}_{j}|\le 
			C\delta(x_1)^{(2j-3)/2}\quad \text{in}~\Omega_{2R},
		\end{align}
		and for $0\le k\le m+1,$ 
		\begin{align}\label{highvpfesti}
			\begin{split}
				&|\partial_{x_1}^{k}\partial_{x_2}^{s}({\bf v}_{1}^{j})^{(1)}|
				\le C\delta(x_1)^{(2j-2s-k-2)/2}\quad\text{for}~0\le s\le 2j-1,\\
				&\partial_{x_2}^{s}({\bf v}_{1}^{j})^{(1)}=0\quad\text{for}~s\ge 2j,\\
				&|\partial_{x_1}^{k}\partial_{x_2}^{s}({\bf v}_{1}^{j})^{(2)}|
				\le C\delta(x_1)^{(2j-2s-k-1)/2}\quad\text{for}~0\le s\le 2j,\\
				&\partial_{x_2}^{s}({\bf v}_{1}^{j})^{(2)}=0\quad\text{for}~s\ge 
				2j+1,\\
				&|\partial_{x_1}^{k}\partial_{x_2}^{s}\bar{p}_{j}|\le 
				C\delta(x_1)^{(2j-2s-k-3)/2}\quad\text{for}~s\le 2j-1,\\
				&\partial_{x_2}^{s}\bar{p}_{j}=0\quad\text{for}~s\ge 2j,~ j\ge2.
			\end{split}	
		\end{align} 
		Consequently,
		\begin{align*}
			\begin{split}
				&|({\bf f}_{1}^{m+1})^{(1)}|\le C \Big(|\partial_{x_1x_1}({\bf v}_{1}
				^{m+1})^{(1)}|+|\partial_{x_1}\bar{p}_{m+1}|\Big)\le C\delta(x_1)^{m-1},\\
				&|({\bf f}_{1}^{m+1})^{(2)}|\le C |\partial_{x_1x_1}({\bf v}_{1}^{m+1})^{(2)}|
				\le C \delta(x_1)^{m-\frac{1}{2}}.
			\end{split}
		\end{align*}
		Thus,
		$$|{\bf f}_{1}^{m+1}|\le C \delta(x_1)^{m-1} \quad\mbox{and}~	|\nabla^{s}{\bf f}_{1}^{m+1}|\le C \delta(x_1)^{m-s-1}\quad\mbox{for}~1\le s\le m.$$
		
		Denote
		\begin{equation*}
			{\bf v}^{m+1}(x)=\sum_{l=1}^{m+1}{\bf v}_{1}^{l}(x),\quad \bar{p}^{m+1}(x)=\sum_{l=1}^{m+1}\bar{p}_{l}(x).
		\end{equation*}
		It is easy to verify that ${\bf v}^{m+1}(x)={\bf u}_1^1(x)$ on $\Gamma_{2R}^{\pm}$, $\nabla\cdot{\bf v}^{m+1}(x)=0$ in $\Omega_{2R}$, and for $1\le s\le m$,
		\begin{equation*}
			|{\bf f}^{m+1}(x)|=|\mu\Delta{\bf v}^{m+1}(x)-\nabla \bar{p}^{m+1}(x)|\leq C \delta(x_1)^{m-1},\quad |\nabla^s{\bf f}^{m+1}(x)|\leq C \delta(x_1)^{m-s-1}.
		\end{equation*}
		Thus, by virtue of Proposition \ref{prop3.3}, it holds that
		\begin{equation}\label{u11-vkestg}
			\|\nabla^{m+1}({\bf u}_{1}^{1} -{\bf v}^{m+1})\|
			_{L^{\infty}(\Omega_{\delta(x_1)/2}(x_1))} +\|\nabla^{m} (p_{1}^{1}-
			\bar{p}^{m+1})\|_{L^{\infty}(\Omega_{\delta(x_1)/2}(x_1))} 
			\le C.
		\end{equation}
		
		By \eqref{v1kdef}, \eqref{barpkdef}, \eqref{vpfesti}, and \eqref{highvpfesti}, \eqref{v1lest} becomes 
		\begin{align}\label{v1lestg}
			\begin{split}
				&|\nabla^{m+1}{\bf v}_{1}^{l}|\le C\delta(x_1)^{-\frac{m+2}{2}}\quad\text{for}~l\le 
				\frac{m+1}{2},\\
				&|\nabla^{m+1}{\bf v}_{1}^{l}|\le C\delta(x_1)^{l-m-2}
				\quad\text{for}~\frac{m+2}{2}\le l\le m+1,\\
				&|\nabla^{m}\bar{p}_{l}|\le C \delta(x_1)^{-\frac{m+2}{2}}\quad\text{for}~ 
				l\le \frac{m}{2},\\
				&|\nabla^{m}\bar{p}_{l}|\le C\delta(x_1)^{\frac{2l-2m-3}
					{2}}\quad\text{for}~\frac{m+1}{2}\le l\le m+1.
			\end{split}
		\end{align}
		It follows from \eqref{v1lestg} that
		\begin{equation}\label{zgjdc}
			|\nabla{\bf v}^{m+1}(x)|\leq C\delta(x_1)^{-1}
		\end{equation}
		and for $m\ge 1$,
		\begin{equation}\label{zgj1dc}
			|\nabla^{m+1}{\bf v}^{m+1}(x)|+|\nabla^{m}\bar{p}^{m+1}(x)|\leq C\delta(x_1)^{-\frac{m+2}{2}}.
		\end{equation}
		
		From \eqref{u11-vkestg}, \eqref{zgjdc}, and \eqref{zgj1dc}, we have
		\begin{equation*}
			|\nabla{\bf u}_{1}^{1}(x)|\leq C\delta(x_1)^{-1}\quad\text{in}~\Omega_{R},
		\end{equation*}
		and
		\begin{equation*}
			|\nabla^{m+1}{\bf u}_{1}^{1}(x)|+	|\nabla^{m}{p}_{1}^{1}(x)|\le C \delta(x_1)^{-\frac{m+2}{2}} \quad\text{in}~\Omega_{R}.
		\end{equation*}
		
		For any point $(z_1,0)\in\Omega_{R}$ with $|z_1|=R/2$, because $\bar{p}_{1}$ and $\bar{p}_{2}$ are odd in $x_2$, we have
		\begin{align}\label{p11minus} 
			p_{1}^{1}(x_1,0)-p_{1}^{1}(z_1,0)=p_{1}^{1}(x_1,0)-(\bar{p}_{1}+\bar{p}_{2})(x_1,0)-p_{1}^{1}(z_1,0)+(\bar{p}_{1}+\bar{p}_{2})(z_1,0).
		\end{align}
		It follows from \eqref{u11-vkestg} and \eqref{p11minus} that
		\begin{equation}\label{p11esti}
			|p_{1}^{1}(x_1,0)-p_{1}^{1}(z_1,0)| \le C .
		\end{equation}
		By the mean value theorem, using \eqref{p11esti} and the estimates of $|\nabla p_{1}|$, we obtain
		\begin{align*}
			|p_{1}^{1}(x)-p_{1}^{1}(z_1,0)|=&\,|p_{1}^{1}(x)-p_{1}^{1}(x_{1},0)|+|p_{1}^{1}
			(x_{1},0) -p_{1}^{1}(z_1,0)|\nonumber\\
			\leq&\, \delta(x_1)|\nabla p_{1}^{1}|+|p_{1}^{1}(x_{1},0) -p_{1}^{1}(z_1,0)|\le C 
			\delta(x_1)^{-1/2}.
		\end{align*}
		This completes the proof of Proposition \ref{prop2.1g}.
	\end{proof}
	
	\section{Estimates for $({\bf u}_{1}^{2},p_{1}^{2})$}\label{sec_u2}
	
	In this section, we derive the higher-order derivatives of \({\bf u}_1^2\) and \(p_1^2\) using the method outlined in Section \ref{sec_u1}. However, since \(\boldsymbol{\psi}_2 = (0, 1)^T\) introduces a higher singularity compared to \(\boldsymbol{\psi}_1 = (1, 0)^T\), new difficulties arise in constructing the first auxiliary function pair \(({\bf v}_2^1, \bar{p}_1)\). To address this, we split the process into two parts: \((\tilde{\bf v}_2^1, \tilde{p}_1)\) and \((\hat{\bf v}_2^1, \hat{p}_1)\). In comparison to \eqref{v11def}, in order to ensure \(\nabla \cdot \tilde{\bf v}_2^1 = 0\), the correction terms \(F(x)\) and \(G(x)\) become larger, resulting in \( |\mu\Delta\tilde{\bf v}_2^1 - \nabla\tilde{p}_1| \leq C\delta(x_1)^{-3/2} \), which produces a larger bound. Therefore, to fully capture the singularities of \(\nabla{\bf u}_1^2\), we introduce supplementary terms \(\hat{\bf v}_2^1\) and \(\hat{p}_1\) such that \({\bf v}_2^1 := \tilde{\bf v}_2^1 + \hat{\bf v}_2^1\) and \(\bar{p}_1 := \tilde{p}_1 + \hat{p}_1\), reducing the bound to \( |\mu\Delta{\bf v}_2^1 - \nabla\bar{p}_1| \leq C \delta(x_1)^{-1} \).
	
	To estimate higher-order derivatives, we continue to use the induction method as before. Since the leading terms of \( \sum_{i=1}^{l-1} (\mu \Delta {\bf v}_2^i - \nabla \bar{p}_i)(x) \) appear in the second component, we first construct \(\tilde{p}_l\) to remove this singular term, then build \({\bf v}_2^l\) and the corresponding \(\hat{p}_l\) to handle the leading terms in the first component. This ensures that \( \sum_{i=1}^{l-1} (\mu \Delta {\bf v}_2^i - \nabla \bar{p}_i)(x) + \mu \Delta {\bf v}_2^l - \nabla (\tilde{p}_l + \hat{p}_l)(x) \) has the desired properties, as outlined in Step II below.

	\begin{prop}\label{prop2.2}
		Under the same assumption as in Theorem \ref{main thm1n}, let ${\bf u}_{1}^{2}$ and $p_{1}^{2}$ be the solution to \eqref{u,peq1} with $\alpha=2$. Then for sufficiently small $0<\varepsilon<1/2$, we have
		$$\mbox{(i) }\quad~|\nabla{\bf u}_{1}^{2}(x)|\le C(\varepsilon+|x_1|^2)^{-\frac32},\quad |p_1^2(x)-p_1^2(z_1,0)|\leq C(\varepsilon+|x_1|^2)^{-2},\quad x\in\Omega_{R},\quad$$ 
		for some point $z=(z_1,0)\in \Omega_{R}$, $|z_1|=R/2$; and 
		
		(ii) for any $m\ge 1$, 
		\begin{equation*}
			|\nabla^{m+1}{\bf u}_{1}^{2}(x)|+|\nabla^{m}{p}_{1}^{2}(x)| \le 
			C(\varepsilon+|x_1|^2)^{-\frac{m+4}{2}},\quad x\in\Omega_{R}.
		\end{equation*}
	\end{prop}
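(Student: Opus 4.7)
The strategy is to mirror the framework of Proposition \ref{prop2.1}: construct a sequence of auxiliary pairs $({\bf v}_2^l(x), \bar p_l(x))$ for $1 \le l \le m+1$ in $\Omega_{2R}$ whose partial sum $({\bf v}^{m+1}, \bar p^{m+1}) := \big(\sum_{l=1}^{m+1} {\bf v}_2^l, \sum_{l=1}^{m+1} \bar p_l\big)$ satisfies (i) the same boundary values as $({\bf u}_1^2, p_1^2)$ on $\Gamma_{2R}^\pm$, (ii) $\nabla \cdot {\bf v}^{m+1} = 0$ in $\Omega_{2R}$, and (iii) the inhomogeneous term ${\bf f}_2^{m+1} := \mu \Delta {\bf v}^{m+1} - \nabla \bar p^{m+1}$ and its derivatives satisfy the pointwise bounds $|\nabla^k {\bf f}_2^{m+1}(x)| \le C \delta(x_1)^{m-k-1}$. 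Applying Proposition \ref{prop3.3} with $l = m-1$ then yields $\|\nabla^{m+1}({\bf u}_1^2 - {\bf v}^{m+1})\|_{L^\infty} + \|\nabla^m(p_1^2 - \bar p^{m+1})\|_{L^\infty} \le C$, and the desired bounds will follow once we verify that $|\nabla^{m+1}{\bf v}^{m+1}| + |\nabla^m \bar p^{m+1}| \le C \delta(x_1)^{-(m+4)/2}$.

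For the gradient step, the natural analog of \eqref{v11def}, with the boundary profile $k + \tfrac12$ placed in the second component, forces $F(x)$ and $G(x)$ to be substantially larger than in Section \ref{sec_u1} because $\partial_{x_2}(k+\tfrac12) = \delta(x_1)^{-1}$ creates a leading-order divergence that must be balanced. Solving the resulting ODE for the profile (with integrating factor $\delta(x_1)$) produces coefficients of order $x_1/\delta(x_1) \sim \delta(x_1)^{-1/2}$, consistent with the Reynolds-type squeeze-flow heuristic. This first attempt $\tilde{\bf v}_2^1$ together with a pressure $\tilde p_1$ absorbing $\mu \partial_{x_2 x_2}(\tilde{\bf v}_2^1)^{(2)}$ still leaves a residue of order $\delta(x_1)^{-3/2}$ in ${\bf f}$, which is above the threshold required by Proposition \ref{prop3.3} to give the claimed blow-up. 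We then introduce a second corrector pair $(\hat{\bf v}_2^1, \hat p_1)$, structurally analogous to an extra layer of the construction in Proposition \ref{prop2.1}, designed specifically to cancel this residual $\delta^{-3/2}$ term, finally yielding $|{\bf f}_2^1| \le C \delta(x_1)^{-1}$.

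For the inductive step, the main structural change compared to Section \ref{sec_u1} is that the leading singular term of ${\bf f}_2^{l-1}$ now sits in the second component (of order $\delta(x_1)^{l - 5/2}$) rather than the first. Consequently, the order of corrections is reversed: one first selects $\tilde p_l$ (an antiderivative in $x_2$ of the leading polynomial part of $({\bf f}_2^{l-1})^{(2)}$) to kill this dominant second-component singularity, then constructs a divergence-free ${\bf v}_2^l$ of the form $(\text{polynomial in } x_2)(k^2 - \tfrac14)$ to eliminate the now-leading first-component singularity via $\mu \partial_{x_2 x_2}({\bf v}_2^l)^{(1)}$, and finally chooses $\hat p_l$ to absorb the new $x_2$-independent leading term produced by the non-vanishing $(h_1 - h_2)$ terms in $\nabla \cdot {\bf v}_2^l$, following \eqref{hatp1l}. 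The recursive estimates for the polynomial coefficients, pushed forward via the analogues of \eqref{f1jform}--\eqref{f1jesti} but shifted by one half-order due to the stronger boundary data, give the required bounds on $\nabla^k {\bf f}_2^{m+1}$. The pressure estimate $|p_1^2(x) - p_1^2(z_1,0)| \le C \delta(x_1)^{-2}$ will then follow from the triangle inequality using $\bar p_1 + \bar p_2$ together with the mean value theorem, exactly as in the final step of the proof of Proposition \ref{prop2.1}.

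The main obstacle I expect is engineering the split ${\bf v}_2^1 = \tilde{\bf v}_2^1 + \hat{\bf v}_2^1$ so that the two layers exactly cancel the $\delta^{-3/2}$ residue without destroying the boundary values or the divergence-free constraint. A secondary but nontrivial bookkeeping issue is that in the inductive step the correction order is pressure-first-then-velocity, which means the polynomial degrees in $x_2$ of $({\bf v}_2^l)^{(1)}$, $({\bf v}_2^l)^{(2)}$, and $\bar p_l$ will grow at different rates than in \eqref{v1ldef}; one must track these degrees carefully to maintain the analogue of \eqref{f1jform}--\eqref{f1jesti} throughout the induction, and in particular to verify that the new leading terms produced at each step really are of order $\delta(x_1)^{l-5/2}$ in the second component and not worse.
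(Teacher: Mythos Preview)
Your proposal is correct and follows essentially the same approach as the paper's proof: the two-layer construction ${\bf v}_2^1=\tilde{\bf v}_2^1+\hat{\bf v}_2^1$, $\bar p_1=\tilde p_1+\hat p_1$ for the gradient step, followed by the pressure-first-then-velocity ordering $\tilde p_l \to {\bf v}_2^l \to \hat p_l$ in the induction, is exactly what the paper does. One minor slip: the leading second-component term of ${\bf f}_2^{l-1}$ is of order $\delta(x_1)^{l-3}$ (not $\delta(x_1)^{l-5/2}$; cf.\ \eqref{fzgn3}), which is strictly more singular than the first component's $\delta(x_1)^{l-5/2}$ and is precisely why the pressure correction must come first.
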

	
	\begin{proof}[Proof of Proposition \ref{prop2.2}] 
As in the proof of Proposition \ref{prop2.1}, we may assume that $x_1\ge 0$.
        
		{\bf Step I. Gradient estimate.} 
		Similar to Section \ref{sec_u1}, in order to establish the gradient estimate of ${\bf u}_1^2$, we need to construct a divergence-free auxiliary function ${\bf v}_{2}^{1}$ which satisfies the boundary condition ${\bf v}_{2}^{1}(x)={\bf u}_{1}^{2}(x)=\boldsymbol{\psi}_{2}$ on $\Gamma^{+}_{2R}$, ${\bf v}_{2}^{1}(x)={\bf u}_{1}^{2}(x)=0$ on $\Gamma^{-}_{2R}$, 
		and a corresponding $\bar{p}_{1}$, such that
		$$|\mu\Delta{\bf v}_{2}^{1}-\nabla\bar{p}_{1}|\leq C\delta(x_1)^{-1}.$$
		However, $\boldsymbol{\psi}_{2}=(0,1)^{T}$ introduce in a higher singularity than $\boldsymbol{\psi}_{2}=(1,0)^{T}$. We divide the process into two steps to construct
		$${\bf v}_{2}^{1}:=\tilde{\bf v}_{2}^{1}+\hat{\bf v}_{2}^{1}\quad\mbox{and}\quad \bar{p}_{1}:=\tilde{p}_{1}+\hat{p}_{1}.$$
		
		{\bf Step I.1. Construction of $\tilde{\bf v}_{2}^{1}$ and $\tilde{p}_{1}$.} 
		We begin by constructing an auxiliary function $\tilde{\bf v}_{2}^{1}$ that satisfies the boundary conditions $\tilde{\bf v}_{2}^{1}(x)={\bf u}_{1}^{2}(x)=\boldsymbol{\psi}_{2}$ on $\Gamma^{+}_{2R}$ and $\tilde{\bf v}_{2}^{1}(x)={\bf u}_{1}^{2}(x)=0$ on $\Gamma^{-}_{2R}$. The construction process of $\tilde{\bf v}_{2}^{1}(x)$ is similar to that of ${\bf v}_{1}^{1}(x)$ in Proposition \ref{prop2.1}, with some minor modifications. Specifically, we assume that $\tilde{\bf v}_{2}^{1}(x)$ takes the following form:
		\begin{equation}\label{tildev21def}
			\tilde{\bf v}_{2}^{1}(x)=
			\begin{pmatrix}
				0\\k(x)+\frac{1}{2}
			\end{pmatrix}+ 
			\begin{pmatrix}
				F(x)\\
				G(x)
			\end{pmatrix}\Big(k(x)^2-\frac{1}{4}\Big)\quad\text{in}~\Omega_{2R},
		\end{equation}
		where $k(x)=\frac{x_2-\frac12(h_1-h_2)(x_{1})}{\delta(x_1)}$ is defined in \eqref{def_k}. 
		
		According to \eqref{kbc}, we can see that $\tilde{\bf v}_{2}^{1}(x)={\bf u}_{1}^{2}(x)$ on $\Gamma^{+}_{2R}\cup\Gamma^{-}_{2R}$. The additional term on the right-hand side of equation \eqref{tildev21def} is used to adjust the divergence of the first term, allowing $\nabla\cdot\tilde{\bf v}_{2}^{1}(x)=0$. By a direct calculation, we have
		\begin{equation}\label{u12fz1}
			\begin{split}
				&\partial_{x_{1}}(\tilde{\bf v}_{2}^{1})^{(1)}=\partial_{x_{1}} F(x)\Big(k(x)^2-\frac{1}{4}\Big)+2F(x)k(x)\partial_{x_{1}} k(x),\\
				&\partial_{x_{2}}(\tilde{\bf v}_{2}^{1})^{(2)}=\frac{1}{\delta(x_1)}+\partial_{x_{2}} G(x)\Big(k(x)^2-
				\frac{1}{4}\Big)+\frac{2k(x)}{\delta(x_1)}G(x).
			\end{split}
		\end{equation}
		To determine $F(x)$ and $G(x)$, instead of \eqref{ffz1}, we set 
		\begin{equation}\label{u12fz2}
			\frac14(\partial_{x_{1}}F+\partial_{x_{2}}G)(x)=\frac{1}{\delta(x_1)}.
		\end{equation}
		By using \eqref{u12fz1}, \eqref{u12fz2}, and the divergence-free condition $\partial_{x_{1}}({\bf v}_{2}^{1})^{(1)}+\partial_{x_{2}}({\bf v}_{2}^{1})^{(2)}=0$, we obtain
		\begin{equation}\label{u12fz3}
			G(x)=-2k(x)-F(x)\delta(x_1)\partial_{x_{1}}k(x).
		\end{equation}
		
		Based on \eqref{u12fz2} and \eqref{u12fz3}, we conclude that $F(x)$ satisfies the following first-order differential equation:
		\begin{equation*}
			\partial_{x_{1}}F(x)+\frac{(h'_1+h'_2)(x_1)}{\delta(x_1)}F(x)-
			\delta(x_1)
			\partial_{x_1}k(x)\partial_{x_{2}}F(x)=\frac{6}
			{\delta(x_1)}.
		\end{equation*}
		Instead of \eqref{deff}, we set
		\begin{equation}\label{deffv21}
			F(x)=F(x_{1})=\frac{6x_1}{\delta(x_1)},
		\end{equation}
		and by substituting it into \eqref{u12fz3}, we obtain
		\begin{equation}\label{defgv21}
			G(x)=-2k(x)-6x_1\partial_{x_{1}}k(x).
		\end{equation}
		Thus, $F(x)$ and $G(x)$ in \eqref{tildev21def} are fixed. 
		Therefore, we have
		\begin{equation*}
			\tilde{\bf v}_{2}^{1}(x)=
			\begin{pmatrix}
				0\\k(x)+\frac{1}{2}
			\end{pmatrix}+ 
			\begin{pmatrix}
				\frac{6x_1}{\delta(x_1)}\\
				-2k(x)-6x_1\partial_{x_{1}}k(x)
			\end{pmatrix}\Big(k(x)^2-\frac{1}{4}\Big)\quad\text{in}~\Omega_{2R},
		\end{equation*}
		which satisfies the divergence-free condition $\nabla\cdot\tilde{\bf v}_{2}^{1}(x)=0$ in $\Omega_{2R}$. 
		
		A direct calculation yields, 
		\begin{equation*}
			\begin{split}
				&|\partial_{x_{1}}(\tilde{{\bf v}}_2^1)^{(1)}|\leq C\delta(x_1)^{-1},\quad |\partial_{x_{2}}(\tilde{{\bf v}}_2^1)^{(1)}|\leq C\delta(x_1)^{-3/2},\\
				&|\partial_{x_{1}}(\tilde{{\bf v}}_2^1)^{(2)}|\leq C\delta(x_1)^{-1/2},\,|\partial_{x_{2}}(\tilde{{\bf v}}_2^1)^{(2)}|\leq C\delta(x_1)^{-1};
			\end{split}
		\end{equation*}
		and for $k\ge 1$, $0\le s\le i+1$, and $i=1,2$, 
		\begin{equation}\label{xjyy}
			|\partial_{x_{1}}^k\partial_{x_{2}}^s(\tilde{{\bf v}}_2^1)^{(i)}|\leq 
			C\delta(x_1)^{\frac{i}{2}-\frac{k}{2}-s-1}, \quad\partial_{x_{2}}^{i+2}
			(\tilde{{\bf v}}
			_2^1)^{(i)}=0.
		\end{equation}
		Here we note that $\partial_{x_2x_2}(\tilde{\bf v}_{2}^{1})^{(1)}$ is of order $\delta(x_1)^{-5/2}$ and $\partial_{x_2x_2} (\tilde{\bf v}_{2}^{1})^{(2)}$ is of order $\delta(x_1)^{-2}$, which are two leading terms of $\Delta\tilde{\bf v}_{2}^{1}$. In order to eliminate these two leading terms, we introduce         
		\begin{equation}\label{tildep21}
			\tilde{p}_{1}=-12\mu\int_{x_1}^{R}\frac{y}{\delta(y)^{3}}\ dy+ \mu\partial_{x_{2}}(\tilde{\bf v}_{2}^{1})^{(2)}
			\quad\text{in}~\Omega_{2R},
		\end{equation}
		such that
		\begin{equation}\label{fz1}
			|\partial_{x_{1}}\tilde{p}_{1}-\mu\partial_{x_2x_2}(\tilde{\bf v}_{2}^{1})^{(1)}|=|\mu\partial_{x_1x_2}(\tilde{\bf v}_{2}^{1})^{(2)}|\leq C\delta(x_1)^{-3/2},\quad\partial_{x_{2}}\tilde{p}_{1}=\mu\partial_{x_2x_2}(\tilde{\bf v}_{2}^{1})^{(2)}. 
		\end{equation}
		
		From \eqref{tildev21def}, \eqref{deffv21}, and \eqref{defgv21}, we see that $(\tilde{\bf v}_2^1)^{(1)}$ is a quadratic polynomial in $x_2$, while $(\tilde{\bf v}_2^1)^{(2)}$ is a cubic polynomial in $x_2$. Then, by \eqref{tildev21def} and \eqref{fz1}, we write
		\begin{equation}\label{tildev21-gradp1}
			\begin{aligned}
				\mu\Delta\tilde{\bf v}_{2}^{1}-\nabla\tilde{p}_{1}=&\,	\mu\begin{pmatrix}
					\partial_{x_1}(\partial_{x_{1}}(\tilde{\bf v}_2^1)^{(1)}-\partial_{x_{2}}(\tilde{\bf v}_2^1)^{(2)})\\\\
					\partial_{x_1x_1}(\tilde{\bf v}_{2}^{1})^{(2)}
				\end{pmatrix}\\
				:=&\,\begin{pmatrix}
					A(x_1)x_{2}^{2}+B(x_1)x_2+C(x_1) \\\\
					\hat{A}(x_1)x_{2}^{3}+\hat{B}(x_1)x_{2}^2+\hat{C}(x_1)x_{2}+\hat{D}(x_1)
				\end{pmatrix},	    
			\end{aligned}	
		\end{equation}
		where for $0\le s\le m$, in $\Omega_{2R}$,
		\begin{equation}\label{ABCDest}
			\begin{split}
				&|A^{(s)}|\le C\delta(x_1)^{-\frac{7+s}{2}},\,\,|B^{(s)}|\le C\delta(x_1)^{-\frac{5+s}
					{2}},\,\,|C^{(s)}|\le C\delta(x_1)^{-\frac{3+s}{2}},\\
				&|\hat{A}^{(s)}|\le C\delta(x_1)^{-\frac{8+s}{2}},\,\,|\hat{B}^{(s)}|\le 
				C\delta(x_1)^{-\frac{6+s}{2}},\\
				&|\hat{C}^{(s)}|\le C\delta(x_1)^{-\frac{4+s}{2}},\,\,|\hat{D}^{(s)}|\le 
				C\delta(x_1)^{-\frac{2+s}
					{2}}.
			\end{split}
		\end{equation}
		By using \eqref{tildev21-gradp1} and \eqref{ABCDest}, we have
		\begin{equation*}
			|(\mu\Delta\tilde{\bf v}_{2}^{1}-\nabla\tilde{p}_{1})^{(1)}|\leq C\delta(x_1)^{-3/2},\quad |(\mu\Delta\tilde{\bf v}_{2}^{1}-\nabla\tilde{p}_{1})^{(2)}|\leq C\delta(x_1)^{-1}.
		\end{equation*}
		Thus, we have $|\mu\Delta\tilde{\bf v}_{2}^{1}-\nabla\tilde{p}_{1}|\le C\delta(x_1)^{-3/2}$. However, this bound is too large to apply Proposition \ref{prop3.3}.  Therefore, in order to fully capture the singularity in $\nabla{\bf u}_1^2$, we need to introduce additional divergence-free correctors to reduce the upper bound, $\delta(x_1)^{-3/2}$.
		
		{\bf Step I.2. Construction of $\hat{\bf v}_{2}^{1}$ and $\hat{p}_{1}$.} We choose the correctors in the following form in $\Omega_{2R}$:
		\begin{equation}\label{hatv21def}
			\hat{{\bf v}}_{2}^{1}(x)=
			\begin{pmatrix}
				F_{11}^{2}(x_1)x_2^{2}+F_{11}^{1}(x_1)x_2+F_{11}^{0}(x_1)+\tilde{F}
				_{11}(x_1) \\\\
				F_{21}^{3}(x_1)x_2^{3}+F_{21}^{2}(x_1)x_2^2+F_{21}^{1}(x)x_2+F_{21}
				^{0}(x_1)+\tilde{F}_{21}(x)
			\end{pmatrix}\Big(k(x)^2-\frac{1}{4}\Big).
		\end{equation}
		From \eqref{tildev21-gradp1}, we can see that $(\mu\Delta\tilde{\bf v}_{2}^{1}-\nabla\tilde{p}_{1})^{(1)}$ is a quadratic polynomial in $x_2$. Hence, we choose $F_{11}^{i}(x_1)$, $i=0,1,2$, such that
		\begin{equation}\label{part22cancel}
			\mu	\partial_{x_2x_2}\Big(\sum_{i=0}^{2}F_{11}^{i}(x_1)x_2^i\Big(k(x)^2-\frac14\Big)\Big)=-(\mu\Delta\tilde{\bf v}_{2}^{1}-\nabla\tilde{p}_{1})^{(1)}.
		\end{equation}
		By comparing the coefficients of each term of $x_2$, we obtain
		\begin{equation*}
			\begin{split}
				&F_{11}^{2}(x_1)=-\frac{\delta(x_1)^2}{12\mu}A(x_1),\,\, F_{11}^{1}(x_1)=-
				\frac{\delta(x_1)^{2}}{6\mu}B(x_1)+(h_{1}-h_{2})F_{11}^{2}(x_1), \\ 
				&F_{11}^{0}(x_1)=-\frac{\delta(x_1)^2}{2\mu}C(x_1)+(h_{1}-h_{2})F_{11}^{1}(x_1)+
				\frac14(\varepsilon+2h_1(x_{1}))(\varepsilon+2h_2(x_{1}))F_{11}^{2}(x_1).
			\end{split}
		\end{equation*}
		Thus, we successfully remove the leading terms in $\mu\Delta\tilde{\bf v}_{2}^{1}-\nabla\tilde{p}_{1}$. 
		
		To ensure $\nabla\cdot\hat{{\bf v}}_{2}^{1}(x)=0$, we further take
		\begin{align*}
			&F_{21}^{3}(x_1)=-\frac{\delta(x_1)^{2}}{5}\Big(\frac{F_{11}^{2}(x_1)}
			{\delta(x_1)^{2}}\Big)',\\ &F_{21}^{2}(x_1)=\frac{\delta(x_1)^2}{4}
			\Big(\frac{(h_1-h_2)(x_{1})F_{11}^{2}(x_1)}{\delta(x_1)^2}-\frac{F_{11}^{1}(x_1)}
			{\delta(x_1)^2}\Big)'+(h_1-h_2)(x_{1})F_{21}^{3}(x_1),\\
			&F_{21}^{1}(x_1)=\frac{\delta(x_1)^2}{3}\Big(\frac{(h_1-h_2)(x_{1})F_{11}^{1}(x_1)}
			{\delta(x_1)^2}+\frac{(\varepsilon+2h_1(x_{1}))(\varepsilon+2h_2(x_{1}))}{4\delta(x_1)^2}
			F_{11}^{2}(x_1)-\frac{F_{11}^{0}(x_1)}{\delta(x_1)^2}\Big)' \\
			&\qquad\qquad\qquad+(h_1-h_2)(x_{1})F_{21}^{2}(x_1)+
			\frac14(\varepsilon+2h_1(x_{1}))(\varepsilon+2h_2(x_{1}))F_{21}^{3}(x_1),\\
			&F_{21}^{0}(x_1)=\frac{\delta(x_1)^2}{2}\Big(\frac{(h_1-h_2)(x_{1})F_{11}^{0}
				(x_1)}{\delta(x_1)^2}+\frac{(\varepsilon+2h_1(x_{1}))(\varepsilon+2h_2(x_{1}))}
			{4\delta(x_1)^2}F_{11}^{1}(x_1)\Big)'\\
			&\qquad\qquad\qquad+(h_1-h_2)(x_{1})F_{21}^{1}(x_1)+\frac14(\varepsilon+2h_1(x_{1}))
			(\varepsilon+2h_2(x_{1}))F_{21}^{2}(x_1),
		\end{align*}
		such that
		\begin{equation*}
			\nabla\cdot
			\begin{pmatrix}
				\sum_{i=0}^{2}F_{11}^{i}(x_1)x_2^i\Big(k(x)^2-\frac{1}{4}\Big) \\
				\sum_{i=0}^{3}F_{21}^{i}(x_1)x_2^i\Big(k(x)^2-\frac{1}{4}\Big)
			\end{pmatrix}:=R(x_1).
		\end{equation*}
		Here
		\begin{align*}R(x_1)=&\,-\frac{(h_1-h_2)(x_{1})F_{21}^{0}(x_1)}{\delta(x_1)^2}-
			\Big(\frac{(\varepsilon+2h_1(x_{1}))(\varepsilon+2h_2(x_{1}))}{4\delta(x_1)^2}F_{11}^0(x_1)
			\Big)'\\
			&\,-\frac{(\varepsilon+2h_1(x_{1}))(\varepsilon+2h_2(x_{1}))}{4\delta(x_1)^2}F_{21}^{1}
			(x_1)\end{align*}
		is the $0$-th order term in the polynomial of $x_2$. By the same way as in Proposition \ref{prop2.1}, \eqref{yxws}--\eqref{dy3}, we set
		\begin{equation*}
			\begin{split}
				&\tilde{F}_{11}(x_1)=\frac{6\int_{0}^{x_1}\delta(y)R(y)\ dy}
				{\delta(x_1)},\\
				&\tilde{F}_{21}(x)=-2\delta(x_1)R(x_1)k(x)-\delta(x_1)\partial_{x_1}
				k(x)\tilde{F}_{11}(x_1):=P(x_1)x_2+Q(x_1),
			\end{split}
		\end{equation*} 
		such that
		\begin{equation*}
			\nabla\cdot
			\begin{pmatrix}
				\tilde{F}_{11}(x_1)\Big(k(x)^2-\frac{1}{4}\Big) \\
				\tilde{F}_{21}(x)\Big(k(x)^2-\frac{1}{4}\Big)
			\end{pmatrix}=-R(x_1).
		\end{equation*}
		Thus, we have $\nabla\cdot	\hat{{\bf v}}_{2}^{1}(x)=0$. 
		
		By \eqref{h1h14} and \eqref{ABCDest}, a calculation gives, for $k\ge0,\,i=0,1,2,
		$ and $j=0,1,2,3$, 
		\begin{equation}\label{fz2}
			\begin{split}
				&|F_{11}^{i(k)}|\leq C\delta(x_1)^{\frac12-i-\frac{k}{2}},\,\,\,\, |
				F_{21}^{j(k)}|\leq 
				C\delta(x_1)^{1-j-\frac{k}{2}},\,\, |\tilde{F}_{11}^{(k)}|\leq C 
				\delta(x_1)^{\frac{1-k}{2}}, \\ &|P(x_1)^{(k)}|\le C \delta(x_1)^{-\frac{k}{2}},
				\,\,\,\,|Q(x_1)^{(k)}|\le C\delta(x_1)^{\frac{2-k}{2}}
			\end{split}\quad \text{in}~\Omega_{2R}.
		\end{equation}
		Now we define ${\bf v}_{2}^{1}(x)=\tilde{{\bf v} }_{2}^{1}(x)+\hat{{\bf v}}_{2}^{1}(x),$ which satisfies $\nabla\cdot{\bf v}_{2}^{1}(x)=\nabla\cdot\tilde{{\bf v} }_{2}^{1}(x)+\nabla\cdot\hat{{\bf v}}_{2}^{1}(x)=0.$ Moreover, it follows from \eqref{hatv21def} and \eqref{fz2} that $2\mu\delta(x_1)^{-2}\tilde{F}_{11}(x_1)$ is the leading term in $\mu\Delta{\bf v}_{2}^{1}(x)-\nabla\tilde{p}_1$, which is of order $\delta(x_1)^{-\frac32}$. 
		
		To further cancel this leading term, we set
		\begin{equation}\label{hatp21}
			\hat{p}_{1}=2\mu\int_{0}^{x_1}\frac{\tilde{F}_{11}(y)}{\delta(y)^2}\ 
			dy\quad\text{in}~\Omega_{2R},
		\end{equation}
		such that
		\begin{equation}\label{F113hatp21}
			\partial_{x_1}
			\hat{p}_1=\frac{2\mu}{\delta(x_1)^2}\tilde{F}_{11}(x_1)=\mu\partial_{x_2x_2}\Big(k(x)^2-\frac{1}{4}\Big)\tilde{F}_{11}(x_1),\quad\partial_{x_2}
			\hat{p}_1=0.
		\end{equation}
		Now we define $\bar{p}_{1}=\tilde{p}_{1}+\hat{p}_{1}$.
		
		Denote 
		$${\bf f}_{2}^{1}:=\mu\Delta{\bf v}_{2}^{1}-\nabla\bar{p}_{1}\quad\text{in}~\Omega_{2R}.$$ 
		By means of \eqref{tildev21-gradp1}, \eqref{hatv21def}, \eqref{part22cancel}, and 
		\eqref{F113hatp21}, we have
		\begin{equation*}
			{\bf f}_{2}^{1}=\begin{pmatrix}
				\mu\partial_{x_1x_1}(\hat{\bf v}_{2}^{1})^{(1)}(x)\\\\
				\mu \partial_{x_1x_1}
				(\hat{\bf v}_{2}^{1})^{(2)}(x)+\mu\Delta(\tilde{\bf v}_{2}^{1})^{(2)}(x)-\partial_{x_2}\tilde{p}_{1}(x)+\mu\partial_{x_2x_2}(\hat{\bf v}_{2}^{1})^{(2)}(x)
			\end{pmatrix}.
		\end{equation*}
		From \eqref{hatv21def}, it follows that $(\hat{\bf v}_2^1)^{(1)}$ is a polynomial of order $4$ in $x_2$, while $(\hat{\bf v}_2^1)^{(2)}$ is a polynomial of order $5$ in $x_2$.
		
		In view of \eqref{tildev21-gradp1}, we can rewrite 
		\begin{equation}\label{f211,f212}
			({\bf f}_{2}^{1})^{(1)}=\sum_{i=0}^{4}H_{i1}(x_1)x_{2}^{i},
			\quad({\bf f}_{2}^{1})^{(2)}=\sum_{i=0}^{3}S_{i1}(x_1)x_{2}^{i}+
			\sum_{i=0}^{5}G_{i1}(x_1)x_{2}^{i}\quad \text{in}~\Omega_{2R},
		\end{equation}
		where 
		\begin{equation*}
			\sum_{i=0}^{3}S_{il}(x_1)x_2^{i}:= \mu\Delta(\tilde{\bf v}
			_{2}^{1})^{(2)}-\partial_{x_2}\tilde{p}_{1}+\mu\partial_{x_2x_2}(\hat{\bf v}
			_{2}^{1})^{(2)},\quad
			\sum_{i=0}^{5}G_{il}(x_1)x_{2}^{i}:=\mu \partial_{x_1x_1}
			(\hat{\bf v}_{2}^{1})^{(2)}.
		\end{equation*}
		By using \eqref{ABCDest} and \eqref{fz2}, for 
		$s\ge0,$ $1\le i\le 4,$ $0\le j\le3$, and $0\le k\le5,$
		\begin{equation}\label{GH1esti}
			|H_{i1}^{(s)}|\le C\delta(x_1)^{-\frac12-i-\frac{s}{2}},\quad
			|S_{j1}^{(s)}|\le C \delta(x_1)^{-1-j-\frac{s}{2}},\quad |G_{k1}^{(s)}|
			\le C \delta(x_1)^{-k-\frac{s}{2}}.
		\end{equation}
		Therefore, by \eqref{f211,f212} and \eqref{GH1esti}, we have
		\begin{equation*}
			|({\bf f}_{2}^{1})^{(1)}|\le C\delta(x_1)^{-1/2}, \quad|({\bf f}_{2}
			^{1})^{(2)}|\le C\delta(x_1)^{-1}\quad \text{in}~\Omega_{2R}.
		\end{equation*}
		Thus,
		\begin{equation}\label{f21esti}
			|{\bf f}_{2}^{1}|\le C (|({\bf f}_{2}^{1})^{(1)}|+|({\bf f}_{2}^{1})^{(2)}|) \le 
			C \delta(x_1)^{-1}\quad \text{in}~\Omega_{2R}.
		\end{equation}
		
		By applying Proposition \ref{prop3.3} and using \eqref{f21esti}, we have for any $x\in \Omega_{R}$,
		$$\| \nabla {\bf u}_{1}^{2}-\nabla{\bf v}_{1}^{2}\|_{L^{\infty}
			(\Omega_{\delta(x_1)/2}(x_1))}\le C. $$
		By virtue of \eqref{tildev21def}, \eqref{tildep21}, and \eqref{hatv21def}--\eqref{hatp21}, 
		we have
		\begin{equation}\label{v21high}
			\begin{split}
				&|\nabla^l {\bf v}_2^1|\le C\delta(x_1)^{-\frac{2l+1}{2}}\quad\text{for}~l\le 2,
				\quad |\nabla^l {\bf v}_2^1|\le C\delta(x_1)^{-\frac{l+3}{2}}\quad \text{for}~ 
				l\ge 3,\\
				& |\bar{p}_{1}|\le C\delta(x_1)^{-2},\quad|\nabla^l \bar{p}_1|\le C\delta(x_1)^{-
					\frac{l+4}{2}}\quad \text{for}~ l\ge 1.
			\end{split}
		\end{equation}
		Thus, Proposition \ref{prop2.2} holds for $m=0$, with
		$$|\nabla {\bf u}_{1}^{2}|\le C|\nabla {\bf v}_{2}^{1}(x)| +C\le 
		C\delta(x_1)^{-3/2}\quad\text{in}~\Omega_{R}.$$
		
		{\bf Step II. higher-order derivatives estimates.} Denote
		\begin{equation}\label{deffl}
			{\bf f}_{2}^{j}:={\bf f}_{2}^{j-1}+\mu\Delta{\bf v}_{2}^{j}(x)-\nabla\bar{p}_{j}(x)=\sum_{l=1}^{j}(\mu\Delta{\bf v}_{2}^{l}-\nabla\bar{p}_{l}
			)(x)\quad\text{for}~ 2\le j\le m+1.
		\end{equation}
		We will inductively prove that for $j\ge 1$, ${\bf f}_{2}^{j}$ can be represented as polynomials in $x_2$ as follows:
		\begin{equation}\label{gn1}
			({\bf f}_{2}^{j})^{(1)}=\sum_{i=0}^{2j+2}H_{ij}(x_1)x_2^{i}, \quad
			({\bf f}_{2}^{j})^{(2)}=\sum_{i=0}^{2j+1}S_{ij}(x_1)x_2^{i}+\sum_{i=0}
			^{2j+3}G_{ij}(x_1)x_{2}^{i}\quad\text{in}~\Omega_{2R},
		\end{equation}
		where for any $k\ge 0$,
		\begin{equation}\label{gn2}
			|H_{ij}^{(k)}(x_1)|\le C\delta(x_1)^{j-i-\frac{k}{2}-\frac{3}{2}},\,|S_{ij}
			^{(k)}|\le C\delta(x_1)^{j-i-\frac{k}{2}-2},\,|G_{ij}^{(k)}|\le C 
			\delta(x_1)^{j-i-\frac{k}{2}-1}.
		\end{equation}
		
		Indeed, by \eqref{f211,f212} and \eqref{GH1esti},  we have \eqref{gn1} and 
		\eqref{gn2} hold for $j=1$. Assuming that \eqref{gn1} and \eqref{gn2} hold for 
		$j=l-1$ with $l\ge 2$, that is, in $\Omega_{2R}$,
		\begin{equation}\label{fzgn2}
			({\bf f}_{2}^{(l-1)})^{(1)}=\sum_{i=0}^{2l}H_{i(l-1)}(x_1)x_2^{i}, 
			\quad({\bf f}_{2}^{(l-1)})^{(2)}=\sum_{i=0}^{2l-1}S_{i(l-1)}(x_1)x_2^{i}+
			\sum_{i=0}^{2l+1}G_{i(l-1)}(x_1)x_{2}^{i},
		\end{equation}
		where for any $k\ge 0$,
		\begin{equation}\label{fzgn3}
			|H_{i(l-1)}^{(k)}|\le C \delta(x_1)^{l-i-\frac{k}{2}-\frac{5}{2}},\,|
			S_{i(l-1)}^{(k)}|\le C\delta(x_1)^{l-i-\frac{k}{2}-3},\,|G_{i(l-1)}
			^{(k)}|\le C 
			\delta(x_1)^{l-i-\frac{k}{2}-2}.
		\end{equation}
		Next, we will construct ${\bf v}_{2}^{l}$ and the corresponding $\bar{p}_{l}:=\tilde{p}_{l}+\hat{p}_{l}$, such that ${\bf f}_2^l$ satisfies \eqref{gn1} and \eqref{gn2}.
		
		{\bf Step II.1. Construction of ${\bf v}_{2}^{l}$ and $\bar{p}_{l}$.}
		It follows from \eqref{fzgn2} and \eqref{fzgn3} that
		$$	|({\bf f}_{2}^{(l-1)})^{(1)}|\leq C\delta(x_1)^{l-\frac52}\quad \text{in}~\Omega_{2R}$$
		and
		\begin{equation*}
			\Big|\sum_{i=0}^{2l-1}S_{i(l-1)}(x_1)x_2^{i}\Big|\leq C\delta(x_1)^{l-3}, 
			\quad \Big|\sum_{i=0}^{2l+1}G_{i(l-1)}(x_1)x_{2}^{i}\Big|\leq 
			C\delta(x_1)^{l-2}
			\quad \text{in}~\Omega_{2R}.
		\end{equation*}
		The leading term in ${\bf f}_{2}^{(l-1)}$ is $\sum_{i=0}^{2l-1}S_{i(l-1)}(x_1)x_2^{i}$. To cancel this leading term and reduce the upper bound of non-homogeneous terms, we first take
		\begin{equation*}
			\tilde{p}_{l}=\sum_{i=0}^{2l-1}\frac{S_{i(l-1)}(x_1)}{i+1}x_{2}^{i+1}
			\quad\text{in}~\Omega_{2R},
		\end{equation*}
		such that
		\begin{equation}\label{estl2}
		\begin{aligned}
		    {\bf f}_2^{l-1}-\nabla \tilde{p}_l&=
			\begin{pmatrix}
				\sum_{i=1}^{2l}\left(H_{i(l-1)}(x_1)-\frac{1}{i}S'_{(i-1)(l-1)}
				\right)x_2^{i}+H_{0(l-1)}\\\\
				\sum_{i=0}^{2l+1}G_{i(l-1)}(x_1)x_{2}^{i}
			\end{pmatrix}\\
            &:=\begin{pmatrix}
				\sum_{i=0}^{2l}A_{i(l-1)}(x_1)x_2^{i}\\\\
				\sum_{i=0}^{2l+1}B_{i(l-1)}(x_1)x_{2}^{i}
			\end{pmatrix}.
		\end{aligned}	
		\end{equation}
		By \eqref{fzgn3}, we have for any $k\ge0$,
		\begin{equation}\label{fzxs1}
			|A_{i(l-1)}^{(k)}|\le  C \delta(x_1)^{l-i-\frac{k}{2}-\frac{5}{2}},\quad 
			|B_{i(l-1)}^{(k)}|\le C \delta(x_1)^{l-i-\frac{k}{2}-2}\quad\text{in}
			~\Omega_{2R}.
		\end{equation}
		It follows from \eqref{estl2} and \eqref{fzxs1} that
		\begin{equation*}
			|({\bf f}_2^{l-1})^{(1)}-\partial_{x_{1}} \tilde{p}_l|\leq C\delta(x_1)^{l-\frac52},\quad 	|({\bf f}_2^{l-1})^{(2)}-\partial_{x_{2}} \tilde{p}_l|\leq C\delta(x_1)^{l-2}.
		\end{equation*}
		
		To further reduce the upper bound, $\delta(x_1)^{l-\frac52}$, we need to choose a divergence-free auxiliary function ${\bf v}_{2}^{l}(x)$ that satisfies ${\bf v}_{2}^{l}(x)=0$ on $\Gamma^+_{2R}\cup\Gamma^-_{2R}$, specifically,
		\begin{equation}\label{v2lform}
			\begin{split}
				({\bf v}_{2}^{l})^{(1)}=\Big(\sum_{i=0}^{2l}F_{1l}^{i}(x_1)x_{2}^{i}
				+\tilde{F}_{1l}(x_1)\Big)\Big(k(x)^2-\frac{1}{4}\Big), \\
				({\bf v}_{2}^{l})^{(2)}=\Big(\sum_{i=0}^{2l+1}F_{2l}^{i}(x_1)x_{2}
				^{i}+\tilde{F}_{2l}(x)\Big)\Big(k(x)^2-\frac{1}{4}\Big)
			\end{split}\quad\text{in}~\Omega_{2R}.
		\end{equation} 
		Then we take $F_{1l}^i(x_1)$, $i=0,1,\dots,2l$, so that
		\begin{equation}\label{fzgn1}
			\mu\partial_{x_2x_2}\Big(\sum_{i=0}^{2l}F_{1l}^{i}(x_1)x_{2}^{i}
			\Big(k(x)^2-\frac{1}{4}\Big)\Big)+ ({\bf f}_{2}^{l-1})^{(1)}-
			\partial_{x_1}
			\tilde{p}_{l}=0.
		\end{equation}
		By comparing the coefficients of different order of $x_2$, we have
		\begin{equation*}
		\begin{aligned}
		     F_{1l}^{i}(x_1)=&\, -\frac{\delta(x_1)^2A_{i(l-1)}(x_1)}{\mu(i+1)(i+2)}
			+(h_1-h_2)(x_{1})F_{1l}^{i+1}(x_1)\\
            &\, +\frac14(\varepsilon+2h_1(x_{1}))(\varepsilon+2h_2(x_{1}))F_{1l}
			^{i+2}(x_1).
		\end{aligned}
		\end{equation*}
		Here we use the convention that $F_{1l}^{i}=0$ if $i\notin\{0,\ldots,2l\}$ 
		and $F_{2l}^{i}=0$ if $i\notin\{0,\ldots,2l+1\}.$  
		
		To have $\nabla\cdot{\bf 
			v}_{2}^{l}=0,$ as in Proposition \ref{prop2.1}, we next set
		\begin{align}\label{F1l2lform}
			F_{2l}^{i}(x_1)=&\frac{\delta(x_1)^2}{i+2}\Big(\frac{(h_1-
				h_2)F_{1l}^{i}(x_1)}{\delta(x_1)^2}-\frac{F_{1l}^{i-1}(x_1)}
			{\delta(x_1)^2}+\frac{(\varepsilon+2h_1(x_{1}))(\varepsilon+2h_2(x_{1}))}
			{4\delta(x_1)^2}F_{1l}^{i+1}(x_1)\Big)'\nonumber\\
			&+(h_1-h_2)(x_{1})F_{2l}
			^{i+1}(x_1)+\frac14(\varepsilon+2h_1(x_{1}))(\varepsilon+2h_2(x_{1}))F_{2l}^{i+2}
			(x_1),
		\end{align}
		such that
		\begin{equation*}
			\nabla \cdot\begin{pmatrix}
				\Big(\sum_{i=0}^{2l}F_{1l}^{i}(x_1)x_{2}
				^{i}\Big)\Big(k(x)^2-\frac{1}{4}\Big)\\
				\Big(\sum_{i=0}^{2l+1}F_{2l}^{i}(x_1)x_{2}
				^{i}\Big)\Big(k(x)^2-\frac{1}{4}\Big)
			\end{pmatrix}:=R(x_1)\quad\text{in}~\Omega_{2R},
		\end{equation*}
		where
		\begin{align*}
			R(x_1)=&\,-\frac{h_1-h_2}{\delta(x_1)^2}F_{2l}^{0}(x_1)-\frac{(\varepsilon+2h_1(x_{1}))
				(\varepsilon+2h_2(x_{1}))}{4\delta(x_1)^2}F_{2l}^{1}(x_1)\\
			&\quad -
			\Big(\frac{(\varepsilon+2h_1(x_{1}))(\varepsilon+2h_2(x_{1}))}{4\delta(x_1)^2}F_{1l}^{0}
			\Big)'.   
		\end{align*}          
		As in Proposition \ref{prop2.1}, \eqref{yxws}--
		\eqref{dy3}, we construct $\tilde{F}_{1l}(x_1)$ and $\tilde{F}_{2l}(x)$ as follows:
		\begin{equation}\label{tilF1l2ldef}
			\begin{split}
				&\tilde{F}_{1l}(x_1)=\frac{6\int_{0}^{x_1}\delta(y)R(y)\ dy}{\delta(y)},\\
				&\tilde{F}_{2l}(x)=-2\delta(x_1)R(x_1)k(x)-\delta(x_1)\partial_{x_1}
				k(x)\tilde{F}_{1l}(x_1):=P(x_1)x_2+Q(x_1),
			\end{split}
		\end{equation}
		such that
		\begin{equation*}
			\nabla \cdot\begin{pmatrix}
				\tilde{F}_{1l}(x_1)\Big(k(x)^2-\frac{1}{4}\Big) \\
				\tilde{F}_{2l}(x)\Big(k(x)^2-\frac{1}{4}\Big)
			\end{pmatrix}=-R(x_1)\quad\text{in}~\Omega_{2R}.
		\end{equation*}
		Thus, $\nabla\cdot{\bf v}_{2}^{l}=0$. 
		
		Moreover, by \eqref{fzxs1}, 
		\eqref{F1l2lform}, \eqref{tilF1l2ldef}, and a direct calculation, we derive 
		for 
		$k\ge0$, $0\le i \le 
		2l$, and $0\le j\le 2l+1$, in $\Omega_{2R}$,
		\begin{equation}\label{estc1}
			|F_{1l}^{i(k)}|\le C\delta(x_1)^{l-i-\frac{k}{2}-\frac12}, \quad |F_{2l}
			^{j(k)}|\le C \delta(x_1)^{l-j-\frac{k}{2}},
		\end{equation}
		and 
		\begin{equation}\label{estc2}
			|\tilde{F}_{1l}^{(k)}(x_1)|\le C\delta(x_1)^{l-\frac{k+1}{2}},\,\,|
			P^{(k)}(x_1)|\le C\delta(x_1)^{l-\frac{k+2}{2}},\,\,|Q^{(k)}(x_1)|\le 
			C\delta(x_1)^{l-\frac{k}{2}}.
		\end{equation}
		Next, as in Step I.2, to eliminate the new leading term $2\mu\delta(x_1)^{-2}\tilde{F}_{1l}(x_1)$, we further take 
		\begin{equation*}
			\hat{p}_{l}=2\mu\int_{0}^{x_1}\frac{\tilde{F}_{1l}(y)}{\delta(y)^2}\ 
			dy\quad\text{in}~\Omega_{2R},
		\end{equation*}
		such that
		\begin{equation}\label{f1}
			\partial_{x_1}\hat{p}_{2}=\frac{2\mu}{\delta(x_1)^2}\tilde{F}_{1l}(x_1), \quad \partial_{x_2}\hat{p}_{l}
			=0.
		\end{equation}
		Now we obtain an auxiliary pressure function $\bar{p}_{l}:=\tilde{p}_{l}+\hat{p}_{l}$.
		
		{\bf Step II.2. Estimates of ${\bf f}_2^l$.} By \eqref{deffl}, \eqref{fzgn1}, and \eqref{f1}, we obtain
		\begin{equation*}
			{\bf f}_{2}^{l}=\begin{pmatrix}
				\mu\partial_{x_1x_1}({\bf v}_{2}^{l})^{(1)}\\\\
				({\bf f}_{2}^{l-1})^{(2)}-\partial_{x_2}\tilde{p}_{l}+\mu\partial_{x_2x_2}({\bf v}_{2}^{l})^{(2)}+\mu \partial_{x_1x_1}({\bf v}_{2}^{l})^{(1)}
			\end{pmatrix}\quad\text{in}~\Omega_{2R}.
		\end{equation*}
		It follows from \eqref{hatv21def} that $({\bf v}_2^l)^{(1)}$ is a polynomial of order $2l+2$ in $x_2$, while $(\hat{\bf v}_2^1)^{(2)}$ is a polynomial of order $2l+3$ in $x_2$. Then, by \eqref{estl2}, we can rewrite 
		\begin{equation*}
			({\bf f}_{2}^{l})^{(1)}:=\sum_{i=0}^{2l+2}
			H_{il}(x_1)x_2^{i}, \quad
			({\bf f}_{2}^{l})^{(2)}:=\sum_{i=0}^{2l+1}S_{il}
			(x_1)x_2^{i}+\sum_{i=0}^{2l+3}G_{il}(x_1)x_{2}^{i},
		\end{equation*} 
		where 
		$$\sum_{i=0}^{2l+1}S_{il}(x_1)x_2^{i}:=({\bf f}_{2}
		^{l-1})^{(2)}-\partial_{x_2}\tilde{p}_{l}+\mu\partial_{x_2x_2}({\bf v}_{2}
		^{l})^{(2)},\quad  \sum_{i=0}^{2l+3}G_{il}(x_1)x_{2}^{i}:=\mu 
		\partial_{x_1x_1}({\bf v}_{2}^{l})^{(1)}.$$
		Then, by \eqref{fzxs1}, \eqref{estc1}, and 
		\eqref{estc2}, we have for $k\ge 0$, in $\Omega_{2R}$,
		\begin{align*}
			|H_{il}^{k}|\le C \delta(x_1)^{l-i-\frac{k}{2}-\frac{3}{2}},\,\,|S_{il}
			^{(k)}|\le C\delta(x_1)^{l-i-\frac{k}{2}-2},\,\,|G_{il}^{(k)}|\le C 
			\delta(x_1)^{l-i-\frac{k}{2}-1}.
		\end{align*}
		
		Thus, \eqref{gn1} and \eqref{gn2} hold for any $j\ge 1$. By \eqref{v2lform}, 
		\eqref{estc1}, and \eqref{estc2}, we have for $k\ge0$, in $\Omega_{2R}$,
		\begin{equation}\label{v2lest}
			\begin{split}
				&|\partial_{x_1}^{k}\partial_{x_2}^{s}({\bf v}_{2}^{l})^{(1)}|\le C 
				\delta(x_1)^{l-s-\frac{k+1}{2}}\quad\text{for}~s\le2l+2,\quad\partial_{x_2}^{s}
				({\bf v}_{2}^{l})^{(1)}=0\quad\text{for}~s\ge2l+3,                                                                                                                                                                            \\
				&|\partial_{x_1}^{k}\partial_{x_2}^{s}({\bf v}_{2}^{l})^{(2)}|\le C 
				\delta(x_1)^{l-s-\frac{k}{2}}\quad\text{for}~s\le2l+3,\quad\quad\partial_{x_2}^{s}
				({\bf v}_{2}^{l})^{(2)}=0\quad\text{for}~s\ge2l+4,\\
				&|\partial_{x_1}^{k}\partial_{x_2}^{s}\tilde{p}_{l}|\le C\delta(x_1)^{l-s-
					\frac{k+4}{2}}\, \text{for}~s\le2l,\, \partial_{x_2}^{s}\tilde{p}_{l}
				=0\,\,\,\text{for}~s>2l, \,|\partial_{x_1}^{k}\hat{p}_{l}|\le C\delta(x_1)^{l-\frac{k+4}{2}}.
			\end{split}
		\end{equation}
		From \eqref{gn1} and \eqref{gn2}, we obtain
		\begin{equation*}
			|({\bf f}_{2}^{m+1})^{(1)}|\le C \delta(x_1)^{m-\frac{1}{2}},\quad|({\bf f}_{2}
			^{m+1})^{(2)}|\le C\delta(x_1)^{m-1}.
		\end{equation*}
		Consequently,
		\begin{equation*}
			|{\bf f}_{2}^{m+1}|\le C \delta(x_1)^{m-1}\quad\text{in}~\Omega_{2R}.
		\end{equation*}
		Similarly, for $1\le s\le m+1$,
		\begin{equation*}
			|\nabla^{s}{\bf f}_{2}^{m+1}|\le C \delta(x_1)^{m-1-s}\quad\text{in}~\Omega_{2R}.
		\end{equation*}
		
		{\bf Step II.3. Estimates of $\nabla^{m+1}{\bf u}_{1}^{2}$ and $\nabla^{m}p_{1}^{2}$.} 
		Denote
		\begin{equation*}
			{\bf v}^{m+1}(x)=\sum_{l=1}^{m+1}{\bf v}_{2}^{l}(x),\quad \bar{p}^{m+1}(x)=\sum_{l=1}^{m+1}\bar{p}_{l}(x)\quad\text{in}~\Omega_{2R}.
		\end{equation*}
		It is easy to verify that ${\bf v}(x)={\bf u}_1^2(x)$ on $\Gamma_{2R}^{\pm}$, $\nabla\cdot{\bf v}(x)=0$ in $\Omega_{2R}$, and
		\begin{equation*}
			|{\bf f}^{m+1}(x)|=|\mu\Delta{\bf v}(x)-\nabla \bar{p}(x)|\leq C \delta(x_1)^{m-1},\quad |\nabla^s{\bf f}^{m+1}(x)|\leq C \delta(x_1)^{m-s-1},\quad 1\le s\le m.
		\end{equation*}
		Thus, by virtue of Proposition \ref{prop3.3}, we have
		\begin{equation}\label{u12minusest}
			\|\nabla^{m+1}({\bf u}_{1}^{2} -{\bf v}^{m+1})\|_{L^{\infty}(\Omega_{\delta(x_1)/2}(x_1))} +\|\nabla^{m} (p_{1}^{2}-\bar{p}^{m+1})\|_{L^{\infty}(\Omega_{\delta(x_1)/2}(x_1))} \le C.
		\end{equation}
		It follows from \eqref{v2lest} that for $l\ge 2$,
		\begin{equation}\label{gradv2lest}
			\begin{split}
				&|\nabla^{m+1}{\bf v}_{2}^{l}|\le C\delta(x_1)^{-\frac{m+4}{2}}\quad\text{for}~l\le 
				\frac{m-2}{2},\\&|\nabla^{m+1}{\bf v}_{2}^{l}|\le C\delta(x_1)^{l-m-\frac{3}
					{2}}\quad\text{for}~\frac{m-1}{2}\le l\le m+1,\\
				&|\nabla^{m}\bar{p}_{l}|\le C \delta(x_1)^{-\frac{m+4}{2}}\quad\text{for}~ 
				l\le \frac{m-1}{2},\\&|\nabla^{m}\bar{p}_{l}|\le C\delta(x_1)^{l-m-2}
				\quad\text{for}~ \frac{m}{2}\le l\le m+1
			\end{split}\quad \text{in}~\Omega_{2R}.
		\end{equation}
		Thus, by \eqref{v21high} and \eqref{gradv2lest}, we have for $m\ge 1$,
		\begin{equation}\label{zgj3}
			|\nabla^{m+1} {\bf v}^{m+1}|+|\nabla^m \bar{p}^{m+1}|\leq C\delta(x_1)^{-\frac{m+4}{2}}\quad\text{in}~\Omega_{R}.
		\end{equation}
		
		It follows from \eqref{v21high} and \eqref{v2lest} that
		\begin{equation*}
			|\nabla {\bf v}^{m+1}|\leq C\delta(x_1)^{-2}\quad\text{in}~\Omega_{R}.
		\end{equation*}
		This, in combination with \eqref{u12minusest} and \eqref{zgj3}, yields 
		\begin{equation*}
			|\nabla {\bf u}_1^2|\leq C\delta(x_1)^{-2}	\quad\text{in}~\Omega_{R}
		\end{equation*}
		and for $m\ge1$,
		\begin{equation*}
			|\nabla^{m+1} {\bf u}_{1}^{2}| +|\nabla^{m} {p}_{1}^{2}|\le C \delta(x_1)^{-\frac{m+4}{2}}
			\quad\text{in}~\Omega_{R}.
		\end{equation*}   
		For the estimates of $p_{1}^{2}$, by \eqref{v21high} and \eqref{v2lest}, we  have
		\begin{equation*}
			|\bar{p}_1(x)+\bar{p}_2(x)|\leq C\delta(x_1)^{-2}.
		\end{equation*}
		Thus, for a fixed point $z=(z_1,0)\in\Omega_{R}$ with $|
		z_1|=R$, by the mean value theorem and \eqref{u12minusest}, we have
		\begin{equation*}
			|p_{1}^{2}(x)-p_{1}^{2}(z_1,0)|\le C\|\nabla(p_1^2-\bar{p}_1-\bar{p}_2)\|_{L^\infty}+C|\bar{p}_1(x)+\bar{p}_2(x)|+C\leq 
			C\delta(x_1)^{-2}.
		\end{equation*}
		This finishes the proof of Proposition \ref{prop2.2}.
	\end{proof}
	
	\section{Estimates for $({\bf u}_{1}^{3},p_{1}^{3})$}\label{sec_u3}
	
	In this section, we focus on estimating the higher-order derivatives of \({\bf u}_1^3\) and \(p_1^3\). Since the boundary data \(\boldsymbol{\psi}_3 = (x_2, -x_1)^T\) is relatively small, the leading terms of \(\mu\Delta {\bf v}_3^1 - \nabla \bar{p}_1\) are included in \((\mu\Delta {\bf v}_3^1 - \nabla \bar{p}_1)^{(1)}\) in the first step, while the remaining part is a relatively mild term, as shown in \eqref{f3132polyna} and \eqref{u3fz1} below. However, we must first provide the construction process of \(({\bf v}_3^2, \bar{p}_2)\) to initiate the induction process for estimating the higher-order derivatives of \({\bf u}_1^3\) and \(p_1^3\). This distinguishes this section from the previous ones.
	
	\begin{prop}\label{prop2.3}
		Under the same assumption as in Theorem \ref{main thm1}, let ${\bf u}_{1}^{3}$ and $p_{1}^{3}$ be the solution to \eqref{u,peq1} with $\alpha=3$. Then for sufficiently small $0<\varepsilon<1/2$, we have 
		$$\mbox{ (i) }\quad\quad\quad~|\nabla{\bf u}_{1}^{3}(x)|\le C\delta(x_1)^{-1},\quad |p_1^3(x)-p_1^3(z_1,0)|\leq C\delta(x_1)^{-3/2},\quad x\in\Omega_{R},\quad\quad\quad$$ 
		for some point $z=(z_1,0)\in \Omega_{R}$ with $|z_1|=R/2$; and 
		
		(ii) for any $m\ge 1$, 
		\begin{equation*}
			|\nabla^{m+1}{\bf u}_{1}^{3}(x)|+|\nabla^{m}p_{1}^{3}(x)|\le C \delta(x_1)^{-\frac{m+3}{2}}\quad \text{for}~x\in\Omega_{R}.
		\end{equation*}
	\end{prop}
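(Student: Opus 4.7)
The plan is to follow the same energy-iteration scheme developed in Propositions \ref{prop2.1} and \ref{prop2.2}: construct a divergence-free auxiliary sequence $({\bf v}_3^l, \bar{p}_l)$ in $\Omega_{2R}$ with ${\bf v}_3^1 = \boldsymbol{\psi}_3$ on $\Gamma^+_{2R}$, ${\bf v}_3^1 = 0$ on $\Gamma^-_{2R}$, and ${\bf v}_3^l = 0$ on $\Gamma^+_{2R} \cup \Gamma^-_{2R}$ for $l \ge 2$, so that the cumulative right-hand side ${\bf f}_3^{m+1} := \mu\Delta\sum_{l=1}^{m+1}{\bf v}_3^l - \nabla \sum_{l=1}^{m+1} \bar{p}_l$ satisfies $|\nabla^k {\bf f}_3^{m+1}| \le C\delta(x_1)^{m-k-1}$ for $0 \le k \le m$. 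Proposition \ref{prop3.3} then controls $\nabla^{m+1}({\bf u}_1^3 - \sum_l {\bf v}_3^l)$ and $\nabla^m(p_1^3 - \sum_l \bar{p}_l)$ by $O(1)$, while direct pointwise bounds on the auxiliary pair itself are designed to match the claimed blow-up rate $\delta(x_1)^{-(m+3)/2}$, paralleling the gradient and pressure bounds of Proposition \ref{prop2.1}.

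For the base case, I would take the ansatz
\[
{\bf v}_3^1(x) = \boldsymbol{\psi}_3(x)\bigl(k(x) + \tfrac{1}{2}\bigr) + \begin{pmatrix} F(x) \\ G(x) \end{pmatrix}\bigl(k(x)^2 - \tfrac{1}{4}\bigr),
\]
with $k$ as in \eqref{def_k}, and solve for scalars $F,G$ as in Step I.1 of Proposition \ref{prop2.1} to enforce $\nabla \cdot {\bf v}_3^1 = 0$; the associated pressure $\bar{p}_1$ is chosen to cancel the leading $\partial_{x_2x_2}$ contributions from $({\bf v}_3^1)^{(1)}$ and $({\bf v}_3^1)^{(2)}$. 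Because the components of $\boldsymbol{\psi}_3 = (x_2,-x_1)^T$ are of lower order in $\Omega_{2R}$ than the constant $1$ in $\boldsymbol{\psi}_1$, the corrections $F,G$ are correspondingly smaller; a direct calculation will show that ${\bf f}_3^1$ admits a polynomial-in-$x_2$ representation with $|{\bf f}_3^1| \le C\delta(x_1)^{-1}$, which already yields $|\nabla {\bf u}_1^3| \le C\delta(x_1)^{-1}$ through Proposition \ref{prop3.3}.

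The higher-order step constructs ${\bf v}_3^l$ and $\bar{p}_l = \tilde{p}_l + \hat{p}_l$ exactly as in Step II of Proposition \ref{prop2.1}: polynomial coefficients $F_{1l}^i, F_{2l}^i$ are chosen to eliminate the dominant first component of ${\bf f}_3^{l-1}$, corrections $\tilde{F}_{1l}, \tilde{F}_{2l}$ restore divergence-freeness, and $\tilde{p}_l, \hat{p}_l$ absorb the remaining singular pieces. However, as signaled in the paragraph preceding the proposition, ${\bf f}_3^1$ does not immediately fit the standard induction hypothesis \eqref{f1jform}--\eqref{f1jesti}, since $\nabla\boldsymbol{\psi}_3 \neq 0$ introduces extra mild terms absent from Proposition \ref{prop2.1}. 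I would therefore write out the second auxiliary pair $({\bf v}_3^2, \bar{p}_2)$ explicitly, verify that ${\bf f}_3^2$ then satisfies the polynomial representation and coefficient bounds one order lower than ${\bf f}_3^1$, and start the induction from $l = 3$ by direct transcription of Proposition \ref{prop2.1}. The pointwise bounds on $\nabla^{m+1}\sum_l {\bf v}_3^l$ and $\nabla^m \sum_l \bar{p}_l$ then follow as in \eqref{v1lest}.

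The main obstacle is the careful bookkeeping at levels $l = 1, 2$: tracking how the extra singularities generated by $\partial_{x_1}\boldsymbol{\psi}_3$ and $\partial_{x_2}\boldsymbol{\psi}_3$ propagate through the auxiliary construction, and confirming that they are fully absorbed by the $\tilde{F}_{jl}$ and $\tilde{p}_l, \hat{p}_l$ without enlarging the target blow-up order $\delta(x_1)^{-(m+3)/2}$. Once this is verified, the pressure-difference estimate $|p_1^3(x) - p_1^3(z_1,0)| \le C\delta(x_1)^{-3/2}$ follows from the mean value theorem applied to $p_1^3 - \bar{p}_1 - \bar{p}_2$, combined with the $L^\infty$ control on $\bar{p}_1 + \bar{p}_2$ at a fixed point $z = (z_1,0) \in \Omega_R$ with $|z_1| = R/2$, in direct analogy with the closing paragraph of the proof of Proposition \ref{prop2.1}.
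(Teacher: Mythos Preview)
Your proposal is correct and matches the paper's proof essentially step for step: the same ansatz for ${\bf v}_3^1$, the explicit construction of $({\bf v}_3^2,\bar p_2)$ to initialize the induction, the subsequent inductive scheme lifted verbatim from Proposition~\ref{prop2.1}, and the mean-value argument for $p_1^3-p_1^3(z_1,0)$.

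One small caveat on intuition: your remark that the corrections $F,G$ are ``correspondingly smaller'' because the components of $\boldsymbol\psi_3$ are of lower order is not quite right. The second component $-x_1$ contributes $-x_1/\delta(x_1)$ to the divergence of $\boldsymbol\psi_3(k+\tfrac12)$, and the resulting $F$ carries a term $-3x_1^2/\delta(x_1)$ (plus additional pieces involving $h_1\pm h_2$ and $k(x)x_2$), so the leading order of $\partial_{x_2x_2}({\bf v}_3^1)^{(1)}$ is still $\delta(x_1)^{-2}$, the same as for $\alpha=1$. The reason $|{\bf f}_3^1|\le C\delta(x_1)^{-1}$ nonetheless holds is the usual cancellation via $\bar p_1$, not any intrinsic smallness of $F,G$. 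This does not affect the argument, but the explicit computation at Step~I is no lighter than in Proposition~\ref{prop2.1}.
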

	
	\begin{proof}[Proof of Proposition \ref{prop2.3}] The proof also follows that of Propositions \ref{prop2.1}. As before, we may assume that $x_1\ge 0$. Here we only list the key steps and modifications.

		{\bf Step I. Gradient Estimates.}
		Set
		\begin{equation}\label{barv31}
			{\bf v}_{3}^{1}(x)={\boldsymbol{\psi}}_{3}\Big(k(x)+\frac{1}{2}\Big) +
			\begin{pmatrix}
				F(x)\\
				G(x)
			\end{pmatrix}\Big(k(x)^{2}-\frac{1}{4}\Big)\quad\text{in}~\Omega_{2R},
		\end{equation}
		which satisfies ${\bf v}_{3}^{1}(x)={\bf u}_1^3(x)$ on $\Gamma_{2R}^+\cup\Gamma_{2R}^-$. To ensure that $\nabla\cdot{\bf v}_{3}^{1}(x)=0$ in $\Omega_{2R}$, we assume 
		\begin{equation}\label{v31fz1}
			\frac14(\partial_{x_{1}}F+\partial_{x_{2}}G)(x)=x_2\partial_{x_{1}}k(x)-\frac{x_1}{\delta(x_1)}.
		\end{equation}
		By \eqref{barv31}, \eqref{v31fz1}, and  $\nabla\cdot{\bf v}_{3}^{1}(x)=0$ in $\Omega_{2R}$, we have
		\begin{equation}\label{v31fz2}
			G(x)=2\big(x_1-\delta(x_1)x_2\partial_{x_{1}}k(x)\big)k(x)-F(x)\delta(x_1)\partial_{x_{1}}k(x).
		\end{equation}
		Substituting \eqref{v31fz2} into \eqref{v31fz1}, we see that $F(x)$ satisfies
		\begin{align*}\label{v31fz3}
			&\partial_{x_{1}}F(x)+\frac{(h'_1+h'_2)(x_1)}{\delta(x_1)}F(x)-
			\delta(x_1)
			\partial_{x_1}k(x)\partial_{x_{2}}F(x)\nonumber\\
			=&\,-\frac{6x_1}
			{\delta(x_1)}+10x_2\partial_{x_{1}}k(x)-(h_1-h_2)(x_{1})\partial_{x_{1}}k(x)+(h'_1-h'_2)(x_1)k(x)\\
			&\,+\frac{(h_1-h_2)(h'_1-h'_2)(x_{1})}{2\delta(x_1)}.
		\end{align*}
		Here we take a special solution
		\begin{equation}\label{deffv31}
			F(x)=1-\frac{(h_1+h_2)(x_1)+3x_{1}^{2}}{\delta(x_1)}-\frac{3(h_1-h_2)(x_{1})
				x_2}{2\delta(x_1)} -5k(x)x_2,
		\end{equation}
		and by \eqref{v31fz2}, 
		\begin{equation}\label{defgv31}
			G(x)=3\delta(x_1)k(x)\partial_{x_1}k(x)x_2+\frac{3(h_1-h_2)(x_{1})}{2}
			\partial_{x_1}k(x)x_2+2x_{1}k(x)-(\varepsilon-3x_1^2)\partial_{x_1}k(x),
		\end{equation}
		such that $\nabla\cdot{\bf v}_{3}^{1}=0$ in $\Omega_{2R}$. 
		
		By \eqref{barv31}, \eqref{deffv31}, and \eqref{defgv31}, the leading terms in $\Delta({\bf v}_3^1)^{(1)}$ and $\Delta({\bf v}_3^1)^{(2)}$ are 
		$$2\delta(x_1)^{-2}\Big(1-\frac{(h_1+h_2)(x_1)+3x_{1}^{2}}{\delta(x_1)}\Big),$$ being of order $\delta(x_1)^{-2}$ and $$\partial_{x_2x_2}\Big(\big(2x_{1}k(x)-(\varepsilon-3x_1^2)\partial_{x_1}k(x)\big)\Big(k(x)^{2}-\frac{1}{4}\Big)\Big),$$ being of order $\delta(x_1)^{-3/2}$, respectively. To cancel these leading terms, we choose
		\begin{equation*}
			{\bar p}_{1}=\frac{2\mu x_1}{\delta(x_1)^2}-2\mu\int_{x_1}^{R}
			\frac{2y(h'_1+h'_2)(y)-(h_1+h_2)(y)-3y^2}{\delta(y)^3}\ dy+r(x)
			\quad\text{in}~\Omega_{2R},
		\end{equation*}
		where
		$$r(x)=\mu\partial_{x_2}\Big(\big(2x_{1}k(x)-(\varepsilon-3x_1^2)\partial_{x_1}k(x)\big)\Big(k(x)^{2}-\frac{1}{4}\Big)\Big),$$
		such that 
		\begin{equation}\label{zyfz1/}
			\begin{split}
				&\Big|\partial_{x_1}{\bar p}_{1}-2\mu\delta(x_1)^{-2}\Big(1-\frac{(h_1+h_2)(x_1)+3x_{1}^{2}}{\delta(x_1)}\Big)\Big|=|\partial_{x_{1}} r(x)|\ \leq C\delta(x_1)^{-1},\\
				&\partial_{x_2}{\bar p}_{1}=\mu\partial_{x_2x_2}\Big(\big(2x_{1}k(x)-(\varepsilon-3x_1^2)\partial_{x_1}k(x)\big)\Big(k(x)^{2}-\frac{1}{4}\Big)\Big).
			\end{split}
		\end{equation}
		
		Denote
		$${\bf f}_{3}^{1}:=\mu\Delta{\bf v}_{3}^{1}-\nabla\bar{p}_{1}\quad\text{in}~\Omega_{2R}.$$ 
		By \eqref{barv31}, \eqref{deffv31}, and \eqref{defgv31}, we can regard $({\bf v}_3^1)^{(1)}$ as a cubic polynomial in $x_2$ and $({\bf v}_3^1)^{(2)}$ as a polynomial of order $4$ in $x_2$. In view of \eqref{barv31} and \eqref{zyfz1/}, ${\bf f}_{3}^{1}(x)$ has the following form:
		\begin{equation}\label{f3132polyna}
			\quad({\bf f}_{3}^{1})^{(1)}=\sum_{i=0}^{2}S_{i1}(x_1)x_{2}^{i}+
			\sum_{j=1}^{4}G_{j1}(x_1)x_{2}^{j}, \quad({\bf f}_{3}^{1})^{(2)}=\sum_{k=0}^{3}\tilde{S}_{k1}(x_1)x_{2}
			^{k}+\sum_{i=1}^{5}\tilde{G}_{l1}(x_1)x_{2}^{l},
		\end{equation}
		where, for $0\le i\le 2,$ $1\le j\le 4$, $0\le k\le 3$, $1\le l\le5$, and $s\ge 
		0$,
		\begin{equation*}
			\begin{split}
				&|(S_{i1})^{(s)}|\le C\delta(x_1)^{-i-\frac{s}{2}-1},\quad\,\, |
				(G_{j1})^{(s)}|\le C\delta(x_1)^{-j-\frac{s}{2}},\\
				&|(\tilde{S}_{k1})^{(s)}|\le C\delta(x_1)^{-k-\frac{s}{2}-\frac12},
				\quad |(\tilde{G}_{l1})^{(s)}|\le C\delta(x_1)^{-l-\frac{s}{2}+
					\frac12}.
			\end{split}
		\end{equation*}
		Then
		\begin{equation}\label{u3fz1}
			\begin{split}
				&\Big|\sum_{i=0}^{2}S_{i1}(x_1)x_{2}^{i}\Big|\leq C\delta(x_1)^{-1},
				\quad \Big|	\sum_{j=1}^{4}G_{j1}(x_1)x_{2}^{j}\Big|\leq C,\\
				&\Big|\sum_{k=0}^{3}\tilde{S}_{k1}(x_1)x_{2}^{k}\Big|\leq 
				C\delta(x_1)^{-\frac12},\quad\Big|\sum_{l=1}^{5}\tilde{G}_{l1}(x_1)x_{2}
				^{l}\Big|\leq C\delta(x_1)^{\frac12},
			\end{split}
		\end{equation}
		which implies
		$$|({\bf f}_{3}^{1})^{(1)}|\le C \delta(x_1)^{-1},\quad |({\bf f}_{3}^{1})^{(2)}|\le C \delta(x_1)^{-1/2}\quad \text{in}~\Omega_{2R}.$$
		Therefore,
		\begin{equation}\label{f31esti}
			|{\bf f}_{3}^{1}|\le C\delta(x_1)^{-1}\quad \text{in}~\Omega_{2R}.
		\end{equation}
		
		By Proposition \ref{prop3.3} and \eqref{f31esti}, we have
		$$\|\nabla {\bf u}_{1}^{3}-\nabla{\bf v}_{3}^{1}\|_{L^{\infty}
			(\Omega_{\delta(x_1)/2}(x_1))}\le C. $$
		Moreover, a direct calculation gives that in $\Omega_{2R}$,
		\begin{equation}\label{v31gradesti}
			\begin{split}
				&|\nabla{\bf v}_{3}^{1}|\le C \delta(x_1)^{-1},\quad|\nabla^{l}{\bf v}
				_{3}^{1}|\le C\delta(x_1)^{-\frac{l+2}{2}}\quad \text{for}~l\ge2, \\ 
				&|\bar{p}_1|\le C\delta(x_1)^{-3/2},\,\quad\quad|\nabla^{l}\bar{p}_{1}|\le 
				C\delta(x_1)^{-\frac{3+l}{2}}\quad\, \text{for}~l\ge1.
			\end{split}
		\end{equation}
		Hence, it holds that
		$$|\nabla{\bf u}_{1}^{3}|\le |\nabla{\bf v}_{3}^{1}|+C \le C \delta(x_1)^{-1}.$$ 
		
		{\bf Step II. Second-order derivatives estimates.}
		
		Denote $${\bf f}_{3}^{l}:={\bf f}_{3}^{l-1}(x)+\mu \Delta{\bf v}_{3}^{l}(x)-
		\nabla\bar{p}_{l}(x),\quad 2\le l\le m+1.$$ 
		Similar to Step II.1 in the proof of Proposition \ref{prop2.1}, we first take
		\begin{equation}\label{v32form}
			{\bf v}_{3}^{2}:=
			\begin{pmatrix}
				F_{12}^{2}(x_1)x_2^{2}+F_{12}^{1}(x_1)x_{2}+F_{12}^{0}(x_1)+
				\tilde{F}_{12}(x_1)\\\\ 
				F_{22}^{3}(x_1)x_{2}^{3}+F_{22}^{2}(x_1)x_{2}^{2}+F_{22}^{1}
				(x_1)x_{2}+F_{22}^{0}(x_1)+\tilde{F}_{22}(x) 
			\end{pmatrix}\Big(k(x)^2-\frac{1}{4}\Big).
		\end{equation} 
		It follows from \eqref{u3fz1} that the leading term of $({\bf f}
		_3^1)^{(1)}$ is $\sum_{i=0}^{2}S_{i1}(x_1)x_{2}^{i}$, being of order $
		\delta(x_1)^{-1}$. 
		
		To cancel this leading term, instead of \eqref{cancelf1l1}, we choose $F_{12}^i$, 
		$i=0,1,2$, 
		such that
		\begin{equation}\label{fz3.31}
	\mu\partial_{x_2x_2}\Big(\sum_{i=0}^{2}F_{12}^ix_2^i\Big(k(x)^2-\frac14\Big)
			\Big)=-\sum_{i=0}^{2}S_{i1}(x_1)x_{2}^{i}.
		\end{equation}
		By comparing the coefficients of each term of $x_2$, we have
		\begin{equation*}
			\begin{split}
				&F_{12}^{2}(x_1)=-\frac{\delta(x_1)^{2}}{12\mu}S_{21}(x_1),\quad F_{12}^{1}
				(x_1)=-\frac{\delta(x_1)^{2}}{6\mu}S_{11}(x_1)+(h_1-h_2)(x_{1})F_{12}^{2}(x_1),\\  
				&F_{12}^{0}(x_1)=-\frac{\delta(x_1)^2}{2\mu}S_{01}(x_1)+ (h_1-h_2)(x_{1})F_{12}
				^{2}(x_1)+\frac14(\varepsilon+2h_1(x_{1}))(\varepsilon+2h_2(x_{1}))F_{12}^{2}(x_1).
			\end{split}
		\end{equation*}
		
		To ensure $\nabla\cdot{\bf v}_3^2(x)=0$, we set
		\begin{equation*}
			\begin{split}
				F_{22}^{3}(x_1)=&\,-\frac{\delta(x_1)^{2}}{5}\Big(\frac{F_{12}^{1}(x_1)}
				{\delta(x_1)^{2}}\Big)',\\
				F_{22}^{2}(x_1)=&\,-\frac{\delta(x_1)^2}{4}
				\Big(\frac{F_{12}^{1}(x_1)-(h_1-h_2)(x_{1})F_{12}^{2}(x_1)}{\delta(x_1)^2}
				\Big)'+ (h_1-h_2)(x_{1})F_{22}^{3}(x_1),\\
				F_{22}^{1}(x_1)=&\,-\frac{\delta(x_1)^2}{3}\Big(\frac{F_{12}^{0}(x_1)}
				{\delta(x_1)^2}-\frac{(h_1-h_2)(x_{1})F_{12}^{1}(x_1)}
				{\delta(x_1)^2}- \frac{(\varepsilon+2h_1)(\varepsilon+2h_2)}
				{4\delta(x_1)^2}F_{12}^{2}(x_1)\Big)'\\
				&\,+\frac14(\varepsilon+2h_1(x_{1}))
				(\varepsilon+2h_2(x_{1}))F_{22}^{3}(x_1)+(h_1-h_2)(x_{1})F_{22}^{2}(x_1),\\
				F_{22}^{0}(x_1)=&\,\frac{\delta(x_1)^2}{2}\Big(\frac{(h_1-h_2)(x_{1})F_{12}
					^{0}(x_1)}{\delta(x_1)^2}+\frac{(\varepsilon+2h_1)
					(\varepsilon+2h_2)}{4\delta(x_1)^2}F_{12}^{1}(x_1)\Big)'\\
				&+\frac14(\varepsilon+2h_1(x_{1}))(\varepsilon+2h_2(x_{1}))F_{22}^{2}(x_1)+(h_1-
				h_2)F_{22}^{1}(x_1),
			\end{split}
		\end{equation*}
		such that
		\begin{equation*}
			\nabla \cdot\begin{pmatrix}
				\Big(\sum_{i=0}^{2}F_{11}^{i}(x_1)x_{2}
				^{i}\Big)\Big(k(x)^2-\frac{1}{4}\Big)\\\\
				\Big(\sum_{i=0}^{3}F_{21}^{i}(x_1)x_{2}
				^{i}\Big)\Big(k(x)^2-\frac{1}{4}\Big)
			\end{pmatrix}:=R(x_1)\quad\text{in}~\Omega_{2R},
		\end{equation*}
		where
		\begin{align*}
			R(x_1)=&\, -\Big(\frac{(\varepsilon+2h_1(x_{1}))(\varepsilon+2h_2(x_{1}))}{4\delta(x_1)^2}
			F_{12}^{0}(x_1)\Big)'\\
			&-\frac{(\varepsilon+2h_1(x_{1}))(\varepsilon+2h_2(x_{1}))}
			{4\delta(x_1)^2}F_{22}^{1}(x_1)-\frac{h_1-h_2}{\delta(x_1)^2}F_{22}^{0}(x_1).
		\end{align*}
		
		Furthermore, to have $\nabla\cdot{\bf v}_{3}^{2}=0,$ we choose $\tilde{F}_{12}(x_1)$ and $\tilde{F}_{22}(x)$ such that
		\begin{equation*}
			\nabla \cdot\begin{pmatrix}
				\tilde{F}_{12}(x_1)\Big(k(x)^2-\frac{1}{4}\Big) \\
				\tilde{F}_{22}(x)\Big(k(x)^2-\frac{1}{4}\Big)
			\end{pmatrix}=-R(x_1)\quad\text{in}~\Omega_{2R}.
		\end{equation*}
		As before, we set
		\begin{equation*}
			\begin{split}
				&\tilde{F}_{12}(x_1)=\frac{6\int_{0}^{x_1}\delta(y)R(y)\ dy}{\delta(x_1)},\\
				&\tilde{F}_{22}(x)=-2\delta(x_1)R(x_1)k(x)-\delta(x_1)\partial_{x_1}k(x)
				\tilde{F}_{12}(x_1):=P(x_1)x_2+Q(x_1).
			\end{split}	
		\end{equation*}
		By \eqref{f3132polyna} and \eqref{v32form}, for any $k\ge0$, 
		$i=0,1,2$, and $j=0,1,2,3$, in $\Omega_{2R}$,
		\begin{equation}\label{partF12}
			\begin{split}
				&|(F_{12}^{i})^{(k)}|\le C \delta(x_1)^{1-i-\frac{k}{2}},\,\, |(F_{22}
				^{j})^{(k)}|\le C \delta(x_1)^{\frac32-j-\frac{k}{2}}, \,\, |\tilde{F}_{12}
				^{(k)}|\le C\delta(x_1)^{1-\frac{k}{2}},\\
				&|P(x_1)^{(k)}|\le C\delta(x_1)^{\frac{1}{2}-\frac{k}{2}},\,\,\,\,\, |Q(x_1)^{(k)}|
				\le C\delta(x_1)^{\frac32-\frac{k}{2}}.
			\end{split}
		\end{equation}
		It follows from \eqref{v32form}, \eqref{fz3.31}, and \eqref{partF12} that the new leading term in ${\bf f}_3^1+\mu\Delta ({\bf v}_3^2)^{(1)}$ is $2\mu\delta(x_1)^{-2}\tilde{F}_{12}(x_1)$, being of order $\delta(x_1)^{-1}$. To cancel this leading term, we choose 
		\begin{equation*}
			\tilde{p}_{2}=2\mu\int_{0}^{x_1}\frac{\tilde{F}_{12}(y)}{\delta(y)^{2}}\ dy
			\quad\text{in}~\Omega_{2R},
		\end{equation*}
		such that
		\begin{equation}\label{FStilp2}
			\partial_{x_1}\tilde{p}_{2}=\frac{2\mu}{\delta(x_1)^2}\tilde{F}_{12}(x_1)=\mu\partial_{x_2x_2}\Big(\tilde{F}_{12}(x_1)\Big(k(x)^2-\frac{1}{4}\Big)\Big),\quad\partial_{x_2}\tilde{p}_{2}=0.
		\end{equation}
		
		As in Step II.2 of the proof of Proposition \ref{prop2.1}, by \eqref{v32form}, \eqref{fz3.31}, \eqref{partF12}, and \eqref{FStilp2}, we have
		\begin{align}\label{f31tilde}
			\begin{split}
				&({\bf f}_{3}^{1})^{(1)}+\mu \Delta({\bf v}_{3}^{2})^{(1)}-
				\partial_{x_1}\tilde{p}_{2}=\mu \partial_{x_1x_1}({\bf v}_{3}
				^{2})^{(1)}=\sum_{i=0}^{4}\hat{G}_{i1}(x_1)x_2^{i},\\
				&({\bf f}_{3}^{1})^{(2)}+\mu \Delta({\bf v}_{3}^{2})^{(2)}-
				\partial_{x_2}\tilde{p}_{2}=\sum_{i=0}^{3}\bar{S}_{i1}(x_1)x_2^{i}
				+\sum_{i=0}^{5}\bar{G}_{i1}(x_1)x_2^{i},
			\end{split}
		\end{align}
		where \begin{equation*}
			\sum_{i=0}^{3}\bar{S}_{i1}(x_1)x_2^{i}:=({\bf f}_{2}
			^{1})^{(2)}-\partial_{x_2}\tilde{p}_{2}+\mu\partial_{x_2x_2}({\bf v}_{2}
			^{2})^{(2)},\quad  \sum_{i=0}^{5}\bar{G}_{i1}(x_1)x_2^{i}:=\mu 
			\partial_{x_1x_1}({\bf v}_{2}^{2})^{(1)}.
		\end{equation*}
		For $s\ge 0,$ $i=0,1,2,3,4$, $j=0,1,2,3$, and $k=0,1,2,3,4,5$,
		\begin{equation*}
			|\hat{G}_{i1}^{(s)}|\le C\delta(x_1)^{-i-\frac{s}{2}},\quad |
			\bar{S}_{j1}^{(s)}|\le C \delta(x_1)^{-\frac12-j-\frac{s}{2}},\quad |
			\bar{G}
			_{k1}^{(s)}|\le C\delta(x_1)^{\frac12-k-\frac{s}{2}}.
		\end{equation*}
		
		Thus, the leading term in $({\bf f}_{3}^{1})^{(i)}+\mu \Delta({\bf v}_{3}^{2})^{(i)}-
		\partial_{x_i}\tilde{p}_{2}$, $i=1,2$, is $\sum_{i=0}^{3}\bar{S}_{i1}(x_1)x_2^{i}$, which is of order $\delta(x_1)^{-\frac12}$. Then we take 
		\begin{equation}\label{hatp2form}
			\hat{p}_{2}=\sum_{i=0}^{3}\frac{1}{i+1}\bar{S}_{i1}(x_1)x_{2}^{i+1}
			\quad\text{in}~\Omega_{2R}.
		\end{equation}
		Set $\bar{p}_{2}=\tilde{p}_{2}+\hat{p}_{2}$. By using \eqref{f31tilde} and 
		\eqref{hatp2form}, we can write 
		\begin{equation}\label{f32form}
			({\bf f}_{3}^{2})^{(1)}=\sum_{i=1}^{4}S_{i2}(x_1)x_{2}^{i},\quad({\bf f}
			_{3}^{2})^{(2)}=\sum_{i=0}^{5}G_{i2}(x_1)x_{2}^{i}\quad\text{in}
			~\Omega_{2R},
		\end{equation}
		and for $k\ge0$, $0\le i\le 4$, and $0\le j\le5$,
		\begin{equation}\label{hsg3esti}	
			|S_{i2}^{(k)}|\le C \delta(x_1)^{-i-\frac{k}{2}},\quad
			|G_{j2}^{(k)}|\le C \delta(x_1)^{\frac12-j-\frac{k}{2}}.
		\end{equation}
		Therefore, $|{\bf f}_{3}^{2}|\le C$ in $\Omega_{2R}$.
		
		{\bf Step III. higher-order derivatives estimates.}
		Similar to Step II of the proof of Proposition \ref{prop2.1}, we can 
		inductively prove that for $j\ge 2$, ${\bf f}_{3}^{j}$ can be expressed as a 
		polynomial in $x_2$:
		\begin{equation}\label{gn1?}
			({\bf f}_{3}^{l})^{(1)}=\sum_{i=0}^{2l}S_{il}(x_1)x_2^{i},\quad({\bf f}
			_{3}^{l})^{(2)}=\sum_{i=0}^{2l+1}G_{il}(x_1)x_2^{i}	\quad\text{in}
			~\Omega_{2R},
		\end{equation}
		and for any $k\ge0$,
		\begin{equation}\label{gn2?}
			|S_{il}^{(k)}|\le C\delta(x_1)^{l-i-\frac{k}{2}-2},\quad |G_{il}^{(k)}|
			\le C \delta(x_1)^{l-i-\frac{k}{2}-\frac32}.
		\end{equation}
		
		Indeed, by \eqref{f32form} and \eqref{hsg3esti},  \eqref{gn1?} and 
		\eqref{gn2?} hold for $j=2$. Assuming that \eqref{gn1?} and \eqref{gn2?} hold for 
		$j=l-1$ with $l\ge 2$, that is, 
		\begin{equation*}
			({\bf f}_{3}^{l-1})^{(1)}=\sum_{i=0}^{2l-2}S_{i(l-1)}(x_1)x_2^{i},\quad({\bf 
				f}_{3}^{l-1})^{(2)}=\sum_{i=0}^{2l-1}G_{i(l-1)}(x_1)x_2^{i},
		\end{equation*}
		and for any $k\ge0$,
		\begin{equation*}
			|S_{i(l-1)}^{(k)}|\le C\delta(x_1)^{l-i-\frac{k}{2}-3},\quad |G_{i(l-1)}
			^{(k)}|\le C \delta(x_1)^{l-i-\frac{k}{2}-\frac52}.
		\end{equation*}
		
		Note that the leading term in ${\bf f}_{3}^{l-1}$ is $({\bf f}_{3}^{l-1})^{(1)}$, which is of order $\delta(x_1)^{l-3}$. To cancel this leading term, we take 
		\begin{equation}\label{v3lform}
			\begin{split}
				({\bf v}_{3}^{l})^{(1)}=&\,\Big(\sum_{i=0}^{2l-2}F_{1l}^{i}(x_1)x_{2}
				^{i}+ \tilde{F}_{1l}(x_1)\Big)\Big(k(x)^{2}-\frac{1}{4}\Big),\\
				({\bf v}_{3}^{l})^{(2)}=&\,\Big(\sum_{i=0}^{2l-1}F_{2l}^{i}(x_1)x_{2}
				^{i}+\tilde{F}_{2l}(x)\Big)\Big(k(x)^{2}-\frac{1}{4}\Big).
			\end{split}	
		\end{equation}
		The purpose of constructing $F_{1l}^i$, $i=0,1,\dots,2l-2$, is to eliminate $({\bf f}_{3}^{l-1})^{(1)}$, while the objective of constructing $F_{2l}^i$, $i=0,1,\dots,2l-1$, $\tilde{F}_{1l}$, and $\tilde{F}_{2l}$ is to achieve $\nabla\cdot {\bf v}_3^l=0$. 
		
		By the same way as that in step 2, we have
		\begin{equation*}
			\begin{split}
				F_{1l}^{i}(x_1)=&\,-\frac{\delta(x_1)^2S_{i(l-1)}(x_1)}{\mu(i+1)(i+2)}+
				(h_1-h_2)(x_{1})F_{1l}^{i+1}(x_1)+\frac14(\varepsilon+2h_1)
				(\varepsilon+2h_2)F_{1l}
				^{i+2}(x_1),\\
				F_{2l}^{i}(x_1)=&\,-\frac{\delta(x_1)^2}{i+2}\Big(\delta(x_1)^{-2}(F_{1l}
				^{i-1}(x_1)-(h_1-h_2)(x_{1})F_{1l}^{i}(x_1)\\
				&\qquad \qquad \qquad -\frac14(\varepsilon+2h_1(x_{1}))
				(\varepsilon+2h_2(x_{1})) 
				F_{1l}^{i+1}(x_1))\Big)'\nonumber\\
				&\,+(h_1-h_2)(x_{1})F_{2l}^{i+1}(x_1)+
				\frac14(\varepsilon+2h_1(x_{1}))(\varepsilon+2h_2(x_{1}))F_{2l}^{i+2}(x_1),
			\end{split}
		\end{equation*}
		and
		\begin{equation*}
			\begin{split}
				&\tilde{F}_{1l}(x_1)=\frac{6\int_{0}^{x_1}\delta(y)R(y)\ dy}
				{\delta(x_1)},\\
				&\tilde{F}_{2l}(x)=-2\delta(x_1)R(x_1)k(x)-\delta(x_1)\partial_{x_1}k(x)
				\tilde{F}_{1l}(x_1):=P(x_1)x_2+Q(x_1),
			\end{split}	
		\end{equation*}
		where
		\begin{align*}
			R(y)=&\, -\Big(\frac{(\varepsilon+2h_1(y))(\varepsilon+2h_2(y))}{4\delta(y)^2}F_{1l}
			^{0}(y)\Big)'-\frac{(\varepsilon+2h_1(y))(\varepsilon+2h_2(y))}{4\delta(y)^2}F_{2l}
			^{1}(y)\\
			&\quad -\frac{(h_1-h_2)(y)}{\delta(y)^2}F_{2l}^{0}(y).
		\end{align*}
		Here we use the convention that $F_{1l}^{i}(x_1)=0$ if $i\notin\{0,\ldots,2l-2
		\}$ and $F_{2l}^{i}(x_1)=0$ if $i\notin\{0,\ldots,2l-1\}$.
		Moreover, we take 
		\begin{equation}\label{tilp3l}
			\tilde{p}_{l}=2\mu \int_{0}^{x_1}\frac{\tilde{F}_{1l}(y)}{\delta(y)^2}\ dy,
		\end{equation}
		such that 
		\begin{equation*}
			\partial_{x_1}
			\tilde{p}_{l}=\mu\partial_{x_2x_2}\Big(\tilde{F}_{1l}(x_1)\Big(k(x)^2-\frac14 \Big)\Big),\quad\partial_{x_2}\tilde{p}_{l}=0\quad \text{in}~\Omega_{2R}.
		\end{equation*}
		
		A direct calculation gives that in $\Omega_{2R}$,
		\begin{align*}
			\begin{split}
				&({\bf f}_{3}^{l-1})^{(1)}+\mu\Delta({\bf v}_{3}^{l})^{(1)}-\partial_{x_1}
				\tilde{p}_{l}=\sum_{i=0}^{2l-2}\hat{G}_{i(l-1)}(x_1)x_{2}^{i}, \\
				&({\bf f}_{3}^{l-1})^{(2)}+\mu\Delta({\bf v}_{3}^{l})^{(2)}-\partial_{x_2}
				\tilde{p}_{l}=\sum_{i=0}^{2l-1}\bar{S}_{i(l-1)}(x_1)x_2^{i}+
				\sum_{i=0}^{2l+1}\bar{G}_{i(l-1)}(x_1)x_{2}^{i}.
			\end{split}	
		\end{align*}
		We take 
		\begin{equation}\label{tilplSil}
			\quad\hat{p}_{l}=\sum_{i=0}^{2l-1}\frac{1}{i+1}\bar{S}_{i(l-1)}
			(x_1)x_{2}^{i}\quad\text{in}~\Omega_{2R}.
		\end{equation}
		Then, \eqref{gn1?} and \eqref{gn2?} hold for any $l\ge 2$. 
		
		By virtue of \eqref{gn2?}, \eqref{v3lform}, \eqref{tilp3l}, and 
		\eqref{tilplSil}, we have for any $k\ge0$ and $l\ge2$,
		\begin{equation}
		    \label{v3lesti}
			\begin{aligned}
			    &|\partial_{x_1}^{k}\partial_{x_2}^{s}({\bf v}_{3}^{l})^{(1)}|\le C 
				\delta(x_1)^{l-s-\frac{k+2}{2}}\quad\text{for}~s\le2l,\quad\partial_{x_2}^{s}
				({\bf v}_{3}^{l})^{(1)}=0\quad \text{for}~s\ge2l+1,\\
				&|\partial_{x_1}^{k}\partial_{x_2}^{s}({\bf v}_{3}^{l})^{(2)}|\le C 
				\delta(x_1)^{l-s-\frac{k+1}{2}}\quad\text{for}~s\le2l+1,
				\,\,\partial_{x_2}^{s}({\bf v}_{3}^{l})^{(2)}=0\quad\text{for}~s\ge2l+2,\\
				&|\partial_{x_1}^{k}\partial_{x_2}^{s}\hat{p}_{l}|\le C\delta(x_1)^{l-s-
					\frac{k+3}{2}}\, \text{for}~s\le2l,\, \partial_{x_2}^{s}\hat{p}_{l}
				=0\,\,\, \text{for}~s\ge2l+1,\,|\partial_{x_1}^{k}\tilde{p}_{l}|\le C\delta(x_1)^{l-
					\frac{k+5}{2}}.
			\end{aligned}
		\end{equation}
		Let $\bar{p}_{l}=\tilde{p}_{l}+\hat{p}_{l}.$ Then, for $m\ge 1$ 
		and $l\ge 2$, in $\Omega_{2R}$,
		\begin{align}\label{gradv3lest}
			\begin{split}
				&|\nabla^{m+1}{\bf v}_{3}^{l}|\le C\delta(x_1)^{-\frac{m+3}{2}}\quad \text{for}
				~l\le \frac{m}{2},\\& |\nabla^{m+1}{\bf v}_{3}^{l}|\le C\delta(x_1)^{l-m-2}
				\quad \text{for}~\frac{m+1}{2}\le l\le m+1,\\
				&|\nabla^{m}\bar{p}_{l}|\le C \delta(x_1)^{-\frac{m+3}{2}}\quad\,\, \quad\text{for}~ 
				l\le \frac{m-1}{2},\quad |\nabla\bar{p}_{l}|\le C \delta(x_1)^{l-3},\\& |\nabla^{m}\bar{p}_{l}|\le C\delta(x_1)^{l-m-\frac{3}{2}}\quad\quad \text{for}~\frac{m+1}{2}\le l\le m+1.
			\end{split}
		\end{align}
		By \eqref{gn1?} and \eqref{gn2?}, we obtain that for any $m\ge1$,
		\begin{equation*}
			|{\bf f}_{3}^{m+1}|\le C\delta(x_1)^{m-1},\quad
			|\nabla^{s}{\bf f}_{3}^{m+1}|\le C\delta(x_1)^{m-s-1},~1\le s\le m\quad\text{in}~\Omega_{2R}.
		\end{equation*}
		
		Denote
		\begin{equation*}
			{\bf v}^{m+1}(x)=\sum_{l=1}^{m+1}{\bf v}_{3}^{l}(x),\quad \bar{p}^{m+1}(x)=\sum_{l=1}^{m+1}\bar{p}_{l}(x).
		\end{equation*}
		It is easy to verify that ${\bf v}^{m+1}(x)={\bf u}_1^3(x)$ on $\Gamma_{2R}^{\pm}$, $\nabla\cdot{\bf v}^{m+1}(x)=0$ in $\Omega_{2R},$ and for $~1\le s\le m$,
		\begin{equation*}
			|{\bf f}^{m+1}(x)|=|\mu\Delta{\bf v}^{m+1}(x)-\nabla \bar{p}^{m+1}(x)|\leq C \delta(x_1)^{m-1},\quad |\nabla^s{\bf f}^{m+1}(x)|\leq C \delta(x_1)^{m-s-1}.
		\end{equation*}
		
		Thus, by Proposition \ref{prop3.3}, it holds that
		\begin{equation*}
			\|\nabla^{m+1}({\bf u}_{1}^{3} -{\bf v}^{m+1})\|_{L^{\infty}(\Omega_{\delta(x_1)/2}(x_1))} +\|\nabla^{m} (p_{1}^{3}-\bar{p}^{m+1})\|_{L^{\infty}(\Omega_{\delta(x_1)/2}(x_1))} \le C.
		\end{equation*}
		By \eqref{v31gradesti}, \eqref{v3lesti}, and \eqref{gradv3lest}, we have
		\begin{equation}\label{zgj5}
			|\nabla {\bf v}^{m+1}|\leq C\delta(x_1)^{-1}\quad \text{in}~\Omega_{R},
		\end{equation}
		and for $m\ge1$,
		\begin{equation}\label{zgj6}
			|\nabla^{m+1} {\bf v}^{m+1}|+|\nabla^m\bar{p}^{m+1}|\leq C\delta(x_1)^{-\frac{m+3}{2}}\quad \text{in}~\Omega_{R},
		\end{equation}
		Thus, by \eqref{zgj5} and \eqref{zgj6}, we have 
		$$|\nabla {\bf u}_1^3|\leq C\delta(x_1)^{-1}\quad \text{in}~\Omega_{R},$$
		and for $m\ge1$,
		\begin{equation*}
			|\nabla^{m+1} {\bf u}_{1}^{3}|+|\nabla^{m} {p}_{1}^{3}| \le C\delta(x_1)^{-\frac{m+3}{2}}
			\quad\text{in}~\Omega_{R}.
		\end{equation*}   
		To estimate of $p_{1}^{3}$, we fix a point $(z_1,0)$ with $|z_1|=R/2$. Then by the
		triangle inequality and the mean value theorem, for $x\in\Omega_{R}$,
		\begin{align*}
			|p_1^3(x)-p(z_1,0)|\leq&\, |(p_{1}^{3}-(\bar{p}_1+\bar{p}_2))(x)-(p_{1}^{3}-(\bar{p}_1+\bar{p}_2))(z_1,0)|+|(\bar{p}_1+\bar{p}_2)(x)|+C\nonumber\\
			\leq&\, \|\nabla (p_1^3-(\bar{p}_1+\bar{p}_2))\|_{L^\infty}+|(\bar{p}_1+\bar{p}_2)(x)|+C\le 
			C \delta(x_1)^{-3/2}.
		\end{align*} 
		This finishes the proof of Proposition \ref{prop2.3}.
	\end{proof}
	
	
	\section{Proofs of the main Theorems}\label{sec3}
	By using Proposition \ref{prop2.1}, \ref{prop2.1g}, \ref{prop2.2}, and \ref{prop2.3}, we now complete 
	the proofs of our main theorems.
	
	\subsection{Upper bounds for the higher derivatives}
	Recall that the rigid displacement space in dimension two is given by
	$$\Psi=\mathrm{span}\Bigg\{{\boldsymbol\psi}_{1}=\begin{pmatrix}
		1 \\
		0
	\end{pmatrix},
	{\boldsymbol\psi}_{2}=\begin{pmatrix}
		0\\
		1
	\end{pmatrix},
	{\boldsymbol\psi}_{3}=\begin{pmatrix}
		x_{2}\\
		-x_{1}
	\end{pmatrix}
	\Bigg\}.$$
	It follows from $e({\bf u})=0$ that
	$${\bf u}=\sum_{\alpha=1}^{3}C_{i}^{\alpha}{\boldsymbol\psi}_{\alpha}
	\quad\mbox{in}~D_i,\quad i=1,2,$$
	where $C_{i}^{\alpha}$ are constants to be determined by $({\bf u},p)$ 
	later. 
	
	We decompose the solution of \eqref{maineqs}  as follows:
	\begin{align}
		{\bf u}(x)=&\,\sum_{i=1}^{2}\sum_{\alpha=1}^{3}C_i^{\alpha}{\bf u}_{i}
		^{\alpha}(x)+{\bf u}_{0}(x),\quad 
		p(x)=\sum_{i=1}^{2}\sum_{\alpha=1}^{3}C_i^{\alpha}p_{i}^{\alpha}(x)+p_{0}
		(x),\quad x\in\,\Omega,\label{ud}
	\end{align}
	where ${\bf u}_{i}^{\alpha},{\bf u}_{0}\in{C}^{2}(\Omega;\mathbb R^2),~p_{i}
	^{\alpha}, p_0\in{C}^{1}(\Omega)$, respectively, satisfying
	\begin{equation}\label{equ_v12D}
		\begin{cases}
			\mu\Delta{\bf u}_{i}^\alpha=\nabla p_{i}^{\alpha},\quad\nabla\cdot 
			{\bf u}_{i}^{\alpha}=0&\mathrm{in}~\Omega,\\
			{\bf u}_{i}^{\alpha}={\boldsymbol\psi}_{\alpha}&\mathrm{on}~\partial{D}
			_{i},\\
			{\bf u}_{i}^{\alpha}=0&\mathrm{on}~\partial{D_{j}}\cup\partial{D},~j\neq i,
		\end{cases}\quad i=1,2,
	\end{equation}
	and
	\begin{equation}\label{equ_v32D}
		\begin{cases}
			\mu \Delta{\bf u}_{0}=\nabla p_0,\quad\nabla\cdot {\bf u}_{0}
			=0&\mathrm{in}~\Omega,\\
			{\bf u}_{0}=0&\mathrm{on}~\partial{D}_{1}\cup\partial{D_{2}},\\
			{\bf u}_{0}={\boldsymbol\varphi}&\mathrm{on}~\partial{D}.
		\end{cases}
	\end{equation}
	We  rewrite \eqref{ud} as
	\begin{equation*}
		{{\bf u}}=\sum_{\alpha=1}^{3}\left(C_{1}^{\alpha}-C_{2}^{\alpha}
		\right){\bf u}_{1}^{\alpha}
		+ {\bf u}_{b}~\mbox{and}~ 
		p=\sum_{\alpha=1}^{3}\left(C_{1}^{\alpha}-C_{2}^{\alpha}\right)p_{1}
		^{\alpha}+p_{b}\quad\mbox{in}~\Omega,
	\end{equation*}
	where
	\begin{equation*}
		{\bf u}_{b}:=\sum_{\alpha=1}^{3}C_{2}^{\alpha}({\bf u}_{1}^{\alpha}+{\bf u}
		_{2}^{\alpha})+{\bf u}_{0},\quad p_{b}:=\sum_{\alpha=1}^{3}C_{2}^{\alpha}(p_{1}
		^{\alpha}+p_{2}^{\alpha})+p_{0}.
	\end{equation*}
	Since ${\bf u}_{1}^\alpha+{\bf u}_{2}^\alpha-{\boldsymbol\psi}_\alpha=0$ on $
	\partial D_1\cup\partial D_2$, $\alpha=1,2,3$, it was proved in \cite[Proposition 2.4]{LX1} that, $\nabla\big({\bf u}_{1}^\alpha+{\bf u}_{2}^\alpha-
	{\boldsymbol\psi}_\alpha\big)$ has no singularity in the narrow region. In fact, 
	for $m\ge1$, $\nabla^m\big({\bf u}_{1}^\alpha+{\bf u}_{2}^\alpha\big)$ and $\nabla^{m}\big(p_{1}^{\alpha}+p_{2}^{\alpha})$ are also bounded in the narrow region.
	\begin{prop}\label{propu0}
		Let ${\bf u}_0,{\bf u}_{i}^{\alpha}\in{C}^{2,\gamma}(\Omega;\mathbb 
		R^2),~p_0,p_{i}^{\alpha}\in{C}^{1,\gamma}(\Omega)$ be the solution to 
		\eqref{equ_v12D} and \eqref{equ_v32D}. Then, for any $m\ge0$, we have
		\begin{equation*}
			\|\nabla^{m+1}({\bf u}_{1}^\alpha+{\bf u}_{2}^\alpha)\|_{L^{\infty}
				(\Omega_R)}+\|\nabla^{m}\big(p_{1}^{\alpha}+p_{2}^{\alpha}-(p_{1}^{\alpha}+p_{2}^{\alpha})(z_1,0)\big)\|_{L^{\infty}(\Omega_R)}\leq C
		\end{equation*}		
		and
		\begin{equation*}
			\|\nabla^{m+1}{\bf u}_{0}\|_{L^{\infty}(\Omega_R)}+\|\nabla^{m} \big(p_0-p_0(z_1,0)\big)\|
			_{L^{\infty}(\Omega_R)}\leq C
		\end{equation*}		
		for some point $z=(z_1, 0)\in\Omega_{R}$ with $|z_1|=R/2$.
	\end{prop}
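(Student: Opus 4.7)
The plan is to apply Proposition \ref{prop3.3} directly with trivial auxiliary function pairs, exploiting the special boundary behavior of the solutions in question. The essential observation is that $({\bf u}_1^\alpha + {\bf u}_2^\alpha)|_{\partial D_1 \cup \partial D_2} = \boldsymbol{\psi}_\alpha$ and ${\bf u}_0|_{\partial D_1 \cup \partial D_2} = 0$, so that after subtracting an affine rigid motion (respectively the zero function), the remainder vanishes on the entire boundary of the narrow region and the inhomogeneous term is identically zero. None of the delicate corrector constructions of Sections \ref{sec_u1}--\ref{sec_u3} is needed.

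Concretely, for the first estimate I take ${\bf v}(x) \equiv \boldsymbol{\psi}_\alpha(x)$ and $\bar p \equiv 0$ on $\Omega_{2R}$. Since each $\boldsymbol{\psi}_\alpha$ is an affine rigid motion with $\nabla\cdot\boldsymbol{\psi}_\alpha = 0$ and $\Delta\boldsymbol{\psi}_\alpha = 0$, we have $\nabla\cdot{\bf v} = 0$ in $\Omega_{2R}$ and ${\bf f} := \mu\Delta{\bf v} - \nabla\bar p \equiv 0$, while ${\bf v} = \boldsymbol{\psi}_\alpha = {\bf u}_1^\alpha + {\bf u}_2^\alpha$ on $\Gamma^+_{2R} \cup \Gamma^-_{2R}$. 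For the second estimate I take ${\bf v} \equiv 0$ and $\bar p \equiv 0$, which again yields ${\bf f} \equiv 0$ and matches the vanishing boundary data of ${\bf u}_0$ on $\Gamma^\pm_{2R}$. In both cases the difference $({\bf w},q) := ({\bf u}-{\bf v},\,p-\bar p)$ solves the homogeneous version of \eqref{w1} on $\Omega_{2R}$. The required $L^\infty$ control of ${\bf w}$ and $q$ on $\Omega_{2R}\setminus\Omega_R$ follows from classical Stokes regularity, since in that region $\delta(x_1)\gtrsim R^2$ is bounded below uniformly in $\varepsilon$ and the geometry is $\varepsilon$-independent.

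Since $|\nabla^k{\bf f}| \equiv 0 \le C\delta(x_1)^{l-k}$ for every $l \ge -3/2$, Proposition \ref{prop3.3} may be invoked with any admissible $l$. For $m \ge 1$ I choose $l = m-1$ and obtain, at each $z = (z_1,0) \in \Omega_R$,
\begin{equation*}
\|\nabla^{m+1}{\bf w}\|_{L^\infty(\Omega_{\delta(z_1)/2}(z_1))} + \|\nabla^m q\|_{L^\infty(\Omega_{\delta(z_1)/2}(z_1))} \le C\delta(z_1)^{\,l+1-m} = C.
\end{equation*}
A covering of $\Omega_R$ by such strips then yields the global bound for $m \ge 1$. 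The $m = 0$ gradient bound is obtained in the same way, by applying Proposition \ref{prop3.3} with index $1$ and $l = -1$. The remaining $m = 0$ pressure statement $\|p - p(z_1,0)\|_{L^\infty(\Omega_R)} \le C$ follows by the mean value theorem from the already-established bound $\|\nabla p\|_{L^\infty(\Omega_R)} \le C$, together with the fact that $\mathrm{diam}(\Omega_R)$ is uniformly bounded in $\varepsilon$. The principal difficulty confronted in the earlier sections—building correctors that progressively reduce the singularity of ${\bf f}$—does not appear here, since ${\bf f}$ vanishes outright, so the proposition is essentially a direct corollary of Proposition \ref{prop3.3}.
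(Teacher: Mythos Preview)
Your proposal is correct and follows essentially the same approach as the paper: subtract the rigid motion $\boldsymbol{\psi}_\alpha$ (respectively zero) so that the difference satisfies the homogeneous problem \eqref{w1} with ${\bf f}\equiv 0$ on $\Omega_{2R}$, and then apply Proposition~\ref{prop3.3}. You are in fact slightly more explicit than the paper about the choice of $l$ in Proposition~\ref{prop3.3} and about the classical regularity needed on $\Omega_{2R}\setminus\Omega_R$, but the argument is the same.
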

	\begin{proof}
		By \eqref{equ_v12D} and \eqref{equ_v32D}, it is easy to see that $\big({\bf u}_{1}^\alpha+{\bf u}_{2}^\alpha-{\boldsymbol\psi}_\alpha, p_{1}^{\alpha}+p_{2}^{\alpha}\big)$, $\alpha=1,2,3$, and $({\bf u}_{0}, p_0)$ satisfy the following boundary value problem for the homogeneous Stokes equations in the narrow region
		\begin{equation*}
			\begin{cases}
				-\mu\Delta{\bf u}+\nabla p=0\quad&\mathrm{in}\ \Omega_{2R},\\
				\nabla\cdot {\bf u}=0\quad&\mathrm{in}\ \Omega_{2R},\\
				{\bf u}=0\quad&\mathrm{on}\ \Gamma^+_{2R}\cup\Gamma^-_{2R}.
			\end{cases}
		\end{equation*}
		By applying Proposition \ref{prop3.3} with ${\bf f}=0$, we have, for any $x=(x_1,x_2) \in \Omega_{R}$,
		\begin{equation*}
			\|\nabla{\bf u}\|_{L^{\infty}(\Omega_{\delta(x_1)/2}(x_1))}\leq C,
		\end{equation*}
		and for $m\ge 1$,
		\begin{equation*}
			\|\nabla^{m+1}{\bf u}\|_{L^{\infty}(\Omega_{\delta(x_1)/2}(x_1))} +\|
			\nabla^{m}p\|
			_{L^{\infty}(\Omega_{\delta(x_1)/2}(x_1))}\le C.
		\end{equation*}
		Thus, for any $m\ge 1$, 
		\begin{equation}\label{yyd1}
			\|\nabla{\bf u}\|_{L^{\infty}(\Omega_R)}\le C,\quad \|\nabla^{m+1}{\bf u}\|_{L^{\infty}(\Omega_R)}+\|\nabla^{m} p\|
			_{L^{\infty}(\Omega_R)}\leq C.
		\end{equation}		
		
		For the estimates of $p$, for a fixed point $z=(z_1,0)\in\Omega_{R}$ with $|
		z_1|=R$, by the mean value theorem and \eqref{yyd1}, we have
		\begin{equation}\label{yyd12}
			|p(x)-p(z_1,0)|\leq C\|\nabla p\|_{L^\infty(\Omega_{R})}\leq C,\quad x\in\Omega_{R}.
		\end{equation}
		It follows from \eqref{yyd1} and \eqref{yyd12} that for any $m\ge 0$,
		\begin{equation*}
			\|\nabla^{m+1}{\bf u}\|_{L^{\infty}(\Omega_R)}+\|\nabla^{m} \big(p-p(z_1,0)\big)\|
			_{L^{\infty}(\Omega_R)}\leq C.
		\end{equation*}
		This completes the proof.
	\end{proof}

	For the coefficients $C_i^\alpha$, $i=1,2$, $\alpha=1,2,3$, which depend only on the gradient estimates, we have the following result from \cite[Proposition 2.7]{LX1}.
	
	\begin{prop}\label{lemCialpha}
		Let $C_{i}^{\alpha}$ be defined in \eqref{ud}. Then
		$$|C_i^{\alpha}|\leq\,C,\quad\,i=1,2,~\alpha=1,2,3,$$
		and
		\begin{equation*}
			|C_1^1-C_2^1|\leq C\sqrt{\varepsilon},\quad |C_1^2-C_2^2|\leq 
			C\varepsilon^{3/2},\quad|C_1^3-C_2^3|\leq C\sqrt{\varepsilon}.
		\end{equation*}
	\end{prop}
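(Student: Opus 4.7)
The plan is to encode the interfacial compatibility conditions (the fourth line of \eqref{maineqs}) as a $6\times 6$ linear algebraic system for $\{C_j^\beta\}$, and then read off the bounds from the blow-up rates of the Dirichlet energies $\int_\Omega|e({\bf u}_1^\alpha)|^2$ implied by Propositions~\ref{prop2.1}--\ref{prop2.3}. For the uniform bound $|C_i^\alpha|\le C$, first obtain the global estimate $\|\nabla{\bf u}\|_{L^2(\Omega)}\le C$ via the standard energy identity applied to an $H^1$-lift of $\boldsymbol{\varphi}$. Since $e({\bf u})=0$ in $D_i$ forces ${\bf u}|_{\partial D_i}=\sum_\alpha C_i^\alpha\boldsymbol{\psi}_\alpha$, the trace theorem combined with the linear independence of $\{\boldsymbol{\psi}_\alpha\}$ restricted to $\partial D_i$ yields $|C_i^\alpha|\le C\|{\bf u}\|_{H^1(\Omega)}\le C$.

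For the differences, define $a_{ij}^{\alpha\beta}:=\int_{\partial D_i}\boldsymbol{\psi}_\alpha\cdot\sigma[{\bf u}_j^\beta,p_j^\beta]\nu\,ds$ and $b_i^\alpha:=-\int_{\partial D_i}\boldsymbol{\psi}_\alpha\cdot\sigma[{\bf u}_0,p_0]\nu\,ds$. Substituting \eqref{ud} into the interface conditions produces the system $\sum_{j,\beta}a_{ij}^{\alpha\beta}C_j^\beta=b_i^\alpha$; integration by parts shows $a_{ij}^{\alpha\beta}=a_{ji}^{\beta\alpha}$ and that the full $6\times 6$ matrix is positive semi-definite. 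By Proposition~\ref{propu0}, $\nabla({\bf u}_1^\beta+{\bf u}_2^\beta)$ is uniformly bounded in the narrow region, so $a_{11}^{\alpha\beta}+a_{12}^{\alpha\beta}$ and $a_{21}^{\alpha\beta}+a_{22}^{\alpha\beta}$ are all $O(1)$, while the singular combination $a_{11}^{\alpha\beta}-a_{12}^{\alpha\beta}$ equals $\int_\Omega 2\mu\, e({\bf u}_1^\alpha):e({\bf u}_1^\beta)$ modulo bounded errors. Plugging the gradient estimates from Propositions~\ref{prop2.1}, \ref{prop2.2}, \ref{prop2.3} into this energy integral gives the diagonal scales $\int_\Omega|e({\bf u}_1^\alpha)|^2\sim\varepsilon^{-1/2},\,\varepsilon^{-3/2},\,\varepsilon^{-1/2}$ for $\alpha=1,2,3$ respectively.

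Next, rewrite the $6\times 6$ system in the basis $(C_1^\alpha+C_2^\alpha,\,C_1^\alpha-C_2^\alpha)$. The ``$+$'' block is bounded and invertible, forcing the sums to be $O(1)$; substituting back into the ``$-$'' block reduces it to the $3\times 3$ system $(a_{11}^{\alpha\alpha}-a_{12}^{\alpha\alpha})(C_1^\alpha-C_2^\alpha)+\sum_{\beta\neq\alpha}(a_{11}^{\alpha\beta}-a_{12}^{\alpha\beta})(C_1^\beta-C_2^\beta)=O(1)$. Once diagonal dominance of this block is established, inversion yields $|C_1^\alpha-C_2^\alpha|\lesssim\bigl(\int_\Omega|e({\bf u}_1^\alpha)|^2\bigr)^{-1/2}$, i.e.\ precisely the asserted rates $C\sqrt{\varepsilon},\,C\varepsilon^{3/2},\,C\sqrt{\varepsilon}$.

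The principal obstacle is controlling the cross-couplings $a_{11}^{\alpha\beta}-a_{12}^{\alpha\beta}$ with $\alpha\neq\beta$. Cauchy--Schwarz in the energy inner product only gives $|a_{11}^{\alpha\beta}-a_{12}^{\alpha\beta}|\lesssim\bigl(\int|e({\bf u}_1^\alpha)|^2\int|e({\bf u}_1^\beta)|^2\bigr)^{1/2}$, the geometric mean of two potentially very different scales, which could in principle contaminate the sharp estimate $\varepsilon^{3/2}$ for $C_1^2-C_2^2$. Overcoming this requires a parity argument: computing the leading order of $a_{11}^{\alpha\beta}$ using the explicit auxiliary pairs $({\bf v}_\alpha^1,\bar p_1)$ from Sections~\ref{sec_u1}--\ref{sec_u3}, one verifies that the dominant singular contributions in the $\alpha\neq\beta$ pairings cancel by oddness in $x_2$ (the two leading profiles in the $\boldsymbol{\psi}_1,\boldsymbol{\psi}_3$ case are odd while that of $\boldsymbol{\psi}_2$ is even), leaving only $O(1)$ off-diagonal entries and hence establishing the required diagonal dominance of the reduced $3\times 3$ block.
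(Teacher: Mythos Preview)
The paper does not give its own proof of this proposition; it simply cites \cite[Proposition~2.7]{LX1}. Your outline follows exactly the standard route used there and in \cite{BLL,BLL2,Li2021}: encode the balance conditions as a $6\times6$ linear system for $C_j^\beta$, identify the singular part of the diagonal entries with the Dirichlet energies $\int_\Omega|e({\bf u}_1^\alpha)|^2$, control the off-diagonal couplings, and invert. So the strategy matches.

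Two points need fixing. First, the inversion formula is wrong: if the diagonal coefficient satisfies $a_{11}^{\alpha\alpha}-a_{12}^{\alpha\alpha}\sim\int_\Omega|e({\bf u}_1^\alpha)|^2$ and the right-hand side is $O(1)$, then $|C_1^\alpha-C_2^\alpha|\lesssim\bigl(\int_\Omega|e({\bf u}_1^\alpha)|^2\bigr)^{-1}$, not the $-1/2$ power you wrote. With exponent $-1/2$ you would get $\varepsilon^{1/4},\varepsilon^{3/4},\varepsilon^{1/4}$, not the asserted $\sqrt\varepsilon,\varepsilon^{3/2},\sqrt\varepsilon$.

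Second, the inversion requires \emph{lower} bounds on the diagonal energies, whereas Propositions~\ref{prop2.1}--\ref{prop2.3} only give pointwise \emph{upper} bounds for $|\nabla{\bf u}_1^\alpha|$; the lower bounds must be obtained separately by computing $\int_{\Omega_R}|\nabla{\bf v}_\alpha^1|^2$ from the explicit auxiliary functions. Relatedly, your ``diagonal dominance'' is not automatic: for the pair $(\alpha,\beta)=(1,3)$, Cauchy--Schwarz bounds the off-diagonal only by $\varepsilon^{-1/2}$, the \emph{same} order as both diagonals, so the cancellation calculation you allude to is not a refinement but the crux of that block. Note also that your parity heuristic (``$\boldsymbol\psi_1,\boldsymbol\psi_3$ profiles odd, $\boldsymbol\psi_2$ even'') would, if taken literally, make the $(1,3)$ product even and hence \emph{not} cancel; in the cited reference this cross term is handled by computing $\int_\Omega e({\bf u}_1^1):e({\bf u}_1^3)$ directly from the explicit ${\bf v}_\alpha^1$ and showing the leading singular contributions are in fact $O(1)$.
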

	
	We now prove the main result, Theorem \ref{main thm1n}. 
	\begin{proof}[Proof of Theorem \ref{main thm1n}]
		By virtue of Propositions \ref{prop2.1}, \ref{prop2.2}, \ref{prop2.3}, \ref{propu0}, and \ref{lemCialpha}, we have, for $x\in\Omega_{R}$,
		\begin{equation*}
			|\nabla{\bf u}(x)|\le \sum_{\alpha=1}^{3}|C_{1}^{\alpha}-C_{2}^{\alpha}||
			\nabla{\bf u}_{1}^{\alpha}(x)|+C\le\frac{C\sqrt{\varepsilon}}{\delta(x_1)}+\frac{C\varepsilon^{3/2}}{\delta(x_1)^{\frac{3}{2}}}+\frac{C\sqrt{\varepsilon}}{\delta(x_1)}+C\le 
			\frac{C\sqrt{\varepsilon}}{\delta(x_1)}+C, 
		\end{equation*}
		and
		\begin{align}\label{p-pz_1esti}
			|p(x)-p(z_1,0)|\leq&\, \sum_{\alpha=1}^{3}|(C_{1}^{\alpha}-C_{2}^{\alpha})
			(p_{1}^{\alpha}(x)-p_{1}^{\alpha}(z_1,0))|+C \nonumber\\
			\leq&\,\frac{C\varepsilon^{3/2}}{\delta(x_1)^2}+\frac{C\sqrt{\varepsilon}}
			{\delta(x_1)^{\frac{3}{2}}}+C \le \frac{C\sqrt{\varepsilon}}
			{\delta(x_1)^{\frac{3}{2}}}+C,
		\end{align}
		where $(z_1,0)\in\Omega_{R}$ with $|z_{1}|=R$ is a fix point.
		
		For higher-order derivative estimates, when $m\ge1$,
		\begin{align*}
			&|\nabla^{m+1}{\bf u}(x)|\leq\, \sum_{\alpha=1}^{3}|C_1^\alpha-C_2^\alpha||
			\nabla^{m+1}{\bf u}_{1}^\alpha(x)|+C\nonumber\\
			\leq& \, C\sqrt{\varepsilon}\delta(x_1)^{-\frac{m+3}{2}}+C\varepsilon^{\frac{3}
				{2}}\delta(x_1)^{-\frac{m+4}{2}}+C\sqrt{\varepsilon}\delta(x_1)^{-\frac{m+3}{2}}
			+C\leq C\sqrt{\varepsilon}\delta(x_1)^{-\frac{m+3}{2}}+C,
		\end{align*}
		and
		\begin{align*}
			|\nabla^{m}p(x)|\leq&\, \sum_{\alpha=1}^{3}|C_{1}^{\alpha}-C_{2}^{\alpha}||
			\nabla^{m}p_{1}^{\alpha}|+C\nonumber\\\leq&\, C\sqrt{\varepsilon}\delta(x_1)^{-
				\frac{m+3}{2}}+C\varepsilon^{3/2}\delta(x_1)^{-\frac{m+4}{2}}+
			C\sqrt{\varepsilon}\delta(x_1)^{-\frac{m+3}{2}}+C\leq C\sqrt{\varepsilon}\delta(x_1)^{-\frac{m+3}{2}}+C.
		\end{align*}
		This completes the proof of Theorem \ref{main thm1n}.
	\end{proof}
	
	\begin{proof}[Proof of Theorem \ref{main thm1}]
		The proof for the estimates of ${\bf u}$ and $p$ in Theorem  \ref{main thm1} is similar to that of Theorem \ref{main thm1n}. We only give a proof for the Cauchy stress $\sigma[{\bf u},p-p(z_1,0)]$. By Proposition \ref{prop2.1g} and \eqref{cxiangdeng}, \eqref{p-pz_1esti} becomes
		\begin{align*}
			|p(x)-p(z_1,0)|\leq&\, \sum_{\alpha=1}^{3}|(C_{1}^{\alpha}-C_{2}^{\alpha})
			(p_{1}^{\alpha}(x)-p_{1}^{\alpha}(z_1,0))|+C \nonumber\\
			\leq&\,\frac{C\sqrt{\varepsilon}}{\sqrt{\delta(x_1)}}+
			\frac{C\varepsilon^{3/2}}{\delta(x_1)^2}+C \le \frac{C\sqrt{\varepsilon}}{\delta(x_1)}+C. 
		\end{align*}
		Hence, by Propositions \ref{prop2.1g} and \ref{prop2.2}, we have for $m\ge 0$,
		\begin{align*}
			&|\nabla^{m}\sigma[{\bf u},p-p(z_1,0)]| \\
			\leq&\, \sum_{\alpha=1}^{2}|C_1^\alpha-
			C_2^\alpha| |\nabla^{m+1}{\bf u}_{1}^\alpha(x)|+\sum_{\alpha=1}^{2} |(C_{1}
			^{\alpha}-C_{2}^{\alpha})\nabla^{m}(p_{1}^{\alpha}-p_{1}^{\alpha}(z_1,0))|
			+C\nonumber\\\leq& \, C\sqrt{\varepsilon}\delta(x_1)^{-\frac{m+2}{2}}
			+C\varepsilon^{3/2}\delta(x_1)^{-\frac{m+4}{2}}+C\leq \frac{C\sqrt{\varepsilon}}{\delta(x_1)^{\frac{m+2}{2}}}+C.
		\end{align*}
		The proof of Theorem \ref{main thm1} is finished.
	\end{proof}
	
	\subsection{Lower bounds for the higher derivatives}
	
	Under the assumptions of Theorem \ref{thmlowerbound}, it has been shown in \cite[Propositions 5.1 and 5.4]{LX1} that 
	\begin{equation}\label{cxiangdeng}
		C_1^3=C_2^3.
	\end{equation}
	\begin{proof}[Proof of Theorem \ref{thmlowerbound}]
		By \cite[(5.37)]{LX1}, we have
		\begin{equation}\label{C1-C2bound}
			|C_{1}^{1}-C_{2}^{1}|\ge C\tilde{b}_{1}^{*1}[{\boldsymbol{\varphi}}]
			\sqrt{\varepsilon}.
		\end{equation}
		By Proposition \ref{lemCialpha}, \eqref{xjyy}, and \eqref{v2lest}, we have 
		\begin{equation}\label{C1-C2partu1211}
			|(C_{1}^{2}-C_{2}^{2})\partial_{x_{1}}^{m}\partial_{x_2}({\bf u}_{1}^{2})^{(1)}|
			\le C\varepsilon^{\frac32}\sum_{i=1}^{m+1}|\partial_{x_{1}}^{m}\partial_{x_2}({\bf v}_{2}^{i})^{(1)}|\leq C\varepsilon^{-\frac{m}{2}}.
		\end{equation}
		It follows from \eqref{v111defg} that for some small $r>0$, which may depend on $m$,
		\begin{equation*}
			|\partial_{x_1}^{m}\partial_{x_{2}}({\bf v}_{1}^{1})^{(1)}(r\sqrt{\varepsilon},0 )|=|\partial_{x_1}^{m}(\delta(x_1)^{-1})(r\sqrt{\varepsilon},0 )|\ge C\varepsilon^{-\frac{m+2}{2}},
		\end{equation*}
		and by \eqref{highvpfesti},
		\begin{equation*}
			\sum_{i=2}^{m+1}|\partial_{x_1}^{m}\partial_{x_{2}}({\bf v}_{1}^{i})^{(1)}|\le C\varepsilon^{-\frac{m}{2} }.
		\end{equation*}
		Hence,
		\begin{equation}\label{lower2}
			|\partial_{x_1}^{m}\partial_{x_2}({\bf u}_{1}^{1})^{(1)}|(r\sqrt{\varepsilon},0)\ge\,C\varepsilon^{-\frac{m+2}{2}}.
		\end{equation}
		Combining  \eqref{C1-C2bound}--\eqref{lower2}, we have 
		\begin{align*}
			|\partial_{x_1}^{m}\partial_{x_2}{\bf u}^{(1)}|(r\sqrt{\varepsilon},0)=&\,|
			\sum_{\alpha=1}^{2}(C_{1}^{\alpha}-C_{2}^{\alpha})\partial_{x_1}^{m}
			\partial_{x_2}({\bf u}_{1}^{\alpha})^{(1)}+\partial_{x_1}^{m}\partial_{x_2}{\bf 
				u}_{b}|(r\sqrt{\varepsilon},0)\nonumber\\
			\geq&\,\frac{1}{2}|(C_{1}^{1}-C_{2}^{1})\partial_{x_1}^{m}\partial_{x_2}({\bf u}_{1}^{1})^{(1)}|-C\varepsilon^{-\frac{m}{2}}\ge C|\tilde{b}_{1}^{*1}[\boldsymbol{\varphi}]|
			\varepsilon^{-\frac{m+1}{2}}.
		\end{align*}
		
		For the lower bound of the Cauchy stress $\sigma[{\bf u},p-p(z_1,0)]$, by using \eqref{C1-C2bound}--\eqref{lower2}, we have
		\begin{align*}
			&|\partial_{x_{1}}^m\sigma[{\bf u},p-p(z_1,0)]|(r\sqrt{\varepsilon},0)=\Big|\partial_{x_1}^m\Big(2\mu e( {\bf u} ) - (p-p(z_1,0)) \mathbb{I}\Big)\Big|
			(r\sqrt{\varepsilon},0)\\ 
			\geq&\,\Big|\sum_{\alpha=1}^{2}(C_{1}^{\alpha}-
			C_{2}^{\alpha})\partial_{x_1}^{m}\Big(2\mu e({\bf u}_{1}^{\alpha})-(p_{1}
			^{\alpha}-p_{1}^{\alpha}(z_1,0))\mathbb{I}\Big) \Big|(r\sqrt{\varepsilon},0)-C\\
			\geq&\, C|(C_{1}^{1}-C_{2}^{1})\partial_{x_{1}}^me_{12}({\bf u}_{1}^{1})|(r\sqrt{\varepsilon},0)-C\varepsilon^{-\frac m 2}-
			C\ge C|\tilde{b}_{1}^{*1}[\boldsymbol{\varphi}]|\varepsilon^{-\frac{m+1}{2}}.
		\end{align*} 
		This completes the proof of Theorem \ref{thmlowerbound}.      
	\end{proof}

	\section*{Declarations}
	
	\noindent{\bf Data availability.} Data sharing not applicable to this article as no data sets were generated or analyzed during the current study.
	
	\noindent{\bf Financial interests.} The authors have no relevant financial or non-financial interests to disclose.
	
	\noindent{\bf Conflict of interest.} The authors declare that they have no conflict of interest.

\end{document}